\newcommand{\cA}{{\mathcal A}}
\newcommand{\cB}{{\mathcal B}}
\newcommand{\cD}{{\mathcal D}}
\newcommand{\cE}{{\mathcal E}}
\newcommand{\cF}{{\mathcal F}}
\newcommand{\cH}{{\mathcal H}}
\newcommand{\cK}{{\mathcal K}}
\newcommand{\cM}{{\mathcal M}}
\newcommand{\cN}{{\mathcal N}}
\newcommand{\cO}{{\mathcal O}}
\newcommand{\cW}{{\mathcal W}}
\newcommand{\al}{{\alpha}}
\newcommand{\om}{{\omega}}
\newcommand{\Om}{{\Omega}}
\newcommand{\Ga}{{\Gamma}}
\newcommand{\ga}{{\gamma}}
\newcommand{\ka}{{\kappa}}
\newcommand{\si}{{\sigma}}
\newcommand{\la}{\lambda}
\newcommand{\La}{\Lambda}
\newcommand{\eps}{\varepsilon}
\newcommand{\R}{{\mathbb R}}
\newcommand{\Mg}{{ M_\gamma}}
\newcommand{\mfA}{{\mathfrak{A}}}
\newcommand{\myspace}{\qquad\qquad\qquad}
\newcommand{\hf}{\frac12}
\newcommand{\rarrow}{\rightarrow}
\newcommand{\g}{\nabla}
\newcommand{\wrt}{with respect to}
\newcommand{\be}{\begin{equation}}
\newcommand{\ee}{\end{equation}}
\newcommand{\ba}{\begin{array}}
\newcommand{\ea}{\end{array}}
\newcommand{\bey}{\begin{eqnarray}}
\newcommand{\eey}{\end{eqnarray}}
\newcommand{\beyy}{\begin{eqnarray*}}
\newcommand{\eeyy}{\end{eqnarray*}}
\newtheorem{theorem}{Theorem}[section]
\newtheorem{lemma}[theorem]{Lemma}
\newtheorem{proposition}[theorem]{Proposition}
\newtheorem{remark}[theorem]{Remark}
\newtheorem{assumption}[theorem]{Assumption}
\newtheorem{definition}[theorem]{Definition}
\newtheorem{corollary}[theorem]{Corollary}
\numberwithin{equation}{section}
\begin{document}

\title{Long-time dynamics of a coupled system of nonlinear wave and 
thermoelastic plate equations\footnote{This research was started while the second author was visiting the Dipartimento di Matematica Applicata, 
Universit\`a degli Studi di Firenze, within the program ``Permanenza presso le Unit\`a Amministrative di studiosi stranieri {\em di chiara fama}''
of the International Relations Office of the same University.
The research of the first author was partially supported
by the Italian Ministero dell'Istruzione, dell'Universit\`a e della Ricerca,
within the (2005) PRIN Project ``Regularity, qualitative properties
and control of solutions to nonlinear partial differential equations''.}
}

\author{\sc Francesca Bucci\\[1mm]
\footnotesize Universit\`a degli Studi di Firenze,
Dipartimento di Matematica Applicata\\
\footnotesize Via S.~Marta 3, 50139 Firenze, Italy\\
\footnotesize E-mail: {\tt francesca.bucci@unifi.it}
\\[3mm]
\sc Igor Chueshov\\[1mm]
\footnotesize Kharkov National University,
Department of Mathematics and Mechanics\\
\footnotesize 4 Svobody sq, 61077 Kharkov, Ukraine\\
\footnotesize E-mail: {\tt chueshov@univer.kharkov.ua}
}

\date{}

\maketitle

{\footnotesize
\begin{abstract}
\noindent
We prove the existence of a compact, finite dimensional, global attractor
for a coupled PDE system comprising a nonlinearly damped semilinear wave
equation and a nonlinear system of thermoelastic plate equations,
without any mechanical (viscous or structural) dissipation in the
plate component.
The plate dynamics is modelled following Berger's approach; we investigate
both cases when rotational inertia is included into the model and when it 
is not.
A major part in the proof is played by an estimate---known as 
stabilizability estimate---which shows that the difference of any 
two trajectories can be exponentially stabilized to zero, modulo a 
compact perturbation.
In particular, this inequality yields bounds for the attractor's fractal 
dimension which are independent of two key parameters, namely $\gamma$ 
and $\kappa$, the former related to the presence of rotational inertia 
in the plate model and the latter to the coupling terms.
Finally, we show the upper semi-continuity of the attractor with 
respect to these parameters.
\end{abstract}

\noindent
{\bf Key words}: Coupled PDE system, global attractor, finite fractal dimension,
 nonlinear damping, critical exponent

\smallskip
\noindent
{\bf 2000 Mathematics Subject Classification}: 37L30, 35M20; 74H40

}


\section{Introduction}
A {\em structural acoustic} interaction can be modelled by a coupled
system of partial differential equations (PDE) comprising (i) an equation
describing acoustic waves in a given two or three dimensional chamber
and (ii) an elastic (beam, plate, or shell) equation describing
the vibrations of a (flexible) part of the chamber's wall;
see, e.g., \cite{morse} or \cite{howe}.
The interaction takes place on the boundary between the acoustic medium and
the structure. The problem is thus described by a mathematical model which
couples two (or more) partial differential equations, possibly of different
character, the former defined on an $n$-dimensional manifold and the latter
on an $(n-1)$-dimensional one, respectively.

Spurred on by engineering applications in aerospace industry, linear and
nonlinear structural acoustic models have recently received novel attention,
bringing about new developments (mainly, but not only) in the field of 
mathematical control theory.
Various theoretical issues pertaining to these composite PDE systems have
been explored so far, such as well-posedness, interior and boundary regularity,
controllability, stabilization, and optimization; see, e.g., the monograph \cite{las-cmbs}.
\par
Because of their physical motivation, structural acoustic models which
include {\em thermal effects} in the vibrating wall have been the subject
of recent research as well.
In particular, the study of the uniform stability properties of structural
acoustic models with thermoelastic wall in the case of a single equilibrium
may be found in the papers \cite{lale99,lale00,lale02,le00};
see also \cite{ryz04} for the case when the chamber is the half-space  
$\R^3_+$.
\\
In this paper, we consider a {\em nonlinear} structural acoustic model
which includes thermal effects and {\em does not contain} any mechanical 
dissipation in the plate component.
Accordingly, the PDE system displays an additional coupling of the elastic
equation with a heat equation.
The precise mathematical description of the problem is given below;
see \eqref{pde-system}.
It is our aim to investigate the long-term behaviour ($t\to \infty$) of
problem \eqref{pde-system}.
More precisely, the following questions are addressed: (i) well-posedness,
i.e. existence and uniqueness of the solution as well as continuous dependence
on the initial data; (ii) existence of a compact global attractor and its 
structure; (iii) finite dimensionality and other properties of the attractor.
\par
We recall that depending on whether rotational moments are included
into the model, the thermal component brings about a 
`predominantly hyperbolic' or parabolic character in the overall 
thermoelastic system. 
The present analysis takes in both cases (which correspond to either
$\gamma>0$ or $\gamma=0$ in the elastic equation of system 
\eqref{pde-system} below, respectively).
In addition, it will be shown that the size and the dimension of the 
global attractor are uniformly bounded with respect to the parameter $\gamma$, 
as well as to the `coupling' parameter $\kappa$ 
(where $\kappa=0$ corresponds to the case of the uncoupled wave equation
and thermoelastic system).
\par
Let us make some preliminary historical and bibliographical remarks.
Bounds for the size of the attractor which are uniform with respect to 
the parameter $\gamma$ (rotational inertia) were first obtained in 
\cite{Chu92} for a linearly damped von Karman plates equation.
Later, uniform bounds for the size---and also the dimension---of 
the attractor were establised in \cite{chu-las-preprint06} 
for a thermoelastic von Karman model, in the absence 
of mechanical dissipation.
Inspired by the work of \cite{chu-las-preprint06}, we aim to 
establish similar uniformity properties for the attractor of
the PDE system under investigation, still without including 
any type of dissipation in the plate component.
As we shall see later, in doing so we adapt some techniques 
developed in \cite{chu-las-preprint06}.
It should be observed that the complex scenario described 
above, that is (i) no mechanical dissipation is present, and 
(ii) the cases $\gamma=0$ and $\gamma>0$ 
({\em analytic vs non analytic} case) are treated at once, 
was first considered by the authors of \cite{avalos-las-2} 
in the study of the uniform stability problem for a (linear) 
system of thermoelasticity. (See also \cite{avalos-las-3} and \cite{avalos-1a}.)
\par
Before introducing the PDE model and giving a more detailed description of
the paper's contributions, some further comments are in order.
The theory of infinite-dimensional dynamical systems
is rich and beautiful and has extensive applications.
It is of course difficult to do justice to the richness of the subject,
and we shall just mention the very good references---for both the general
theory and the many applications---\cite{babin} and \cite{Tem97};
see also \cite{chueshov-book}, \cite{hale}.
It should however be observed that the results achieved in the present paper
benefit also from methods and tools pertaining to control theory, which only
very recently gave rise to significant progress in the asymptotic analysis
of hyperbolic (hyperbolic-like) dynamics with nonlinear dissipation; see 
\cite{mem}.
In this paper, we bring forward some new techniques 
developed in \cite{mem} for second order evolutionary equations
and in \cite{chu-las-preprint06} for thermoelastic plate equations.
The technical steps will include, in particular: (i) proving a basic
inequality which allow to reconstruct the energy of the system from 
the thermal and wave dissipation (see Proposition~\ref{pr:main}), 
and (ii) deriving a subsequent key estimate (Proposition~\ref{p:stabiliz}), 
referred to in the recent literature as {\em stabilizability inequality}, 
since it is reminiscent of the ones that occurr in uniform stabilization 
problems.
This kind of estimates has been originally derived in different
contexts for the study of dissipative wave dynamics;
see \cite{cel2,chu-las-1,mem,snowbird}.
(See also \cite{chu-las-jde06} for von Karman plates, \cite{bcl-cpaa-06}
for a composite wave-plate system and 
\cite{chu-las-preprint06} for the thermoelastic von Karman model).
\par
The PDE system under investigation is described as follows.
Let $\Om\subset\R^n$ be an open bounded domain, $n=2,3$,
with boundary $\partial\Om =:\Gamma=\overline{\Gamma_0\cup\Gamma_1}$
comprising two open (in the induced topology), connected, disjoint
parts $\Ga_0$ and $\Ga_1$ of positive measure.
We assume that either $\Omega$ is sufficiently smooth (e.g., $\Gamma\in C^2$)
or else $\Omega$ is convex.
$\Ga_0$ is \emph{flat} and is referred to as the {\em elastic wall},
whose dynamics is described by a thermoelastic Berger plate ($n=3$)
or beam ($n=2$) equation;
for details on Berger model in the isothermal case we refer
to \cite[Chap.~4]{chueshov-book} and to the literature quoted therein.
The acoustic medium in the chamber $\Om$ is described  by a semilinear
wave equation in the variable $z$, while $v$ denotes the vertical displacement
of the plate (or beam).
Then, we consider the following coupled PDE system
\begin{equation}\label{pde-system}
\left\{
\ba{lll}
z_{tt}+  g(z_t)  -  \Delta z + f(z) = 0
& {\rm in} & \Omega\times (0,T)
\\[2mm]
\displaystyle \frac{\partial z}{\partial\nu}  = 0
& {\rm on} & \Gamma_1\times (0,T)
\\[2mm]
\displaystyle\frac{\partial z}{\partial\nu} = \alpha\kappa  \,v_t
& {\rm on} & \Gamma_0\times(0,T)
\\[2mm]
v_{tt} -\ga \Delta v_{tt}   +\Delta^2 v
+ \Big[Q -\displaystyle\int_{\Gamma_0}|\nabla v(x,t)|^2 dx\Big]
\,\Delta v
\\[2mm]{}\,{}\qquad{}\qquad{}\qquad {}\qquad {}\qquad {}\qquad
 + \beta \kappa z_t|_{\Gamma_0} +\Delta \theta= p_0
& {\rm in} & \Gamma_0\times(0,T)
\\[2mm]
v=\Delta v = 0 & {\rm on} & \partial\Gamma_0\times(0,T)
\\[2mm]
 \theta_{t} - \Delta \theta  -\Delta v_t=0
& {\rm in} & \Gamma_0\times(0,T)
\\[2mm]
\theta= 0 & {\rm on} & \partial\Gamma_0\times(0,T)\,,
\ea
\right.
\end{equation}
supplemented with initial data
\begin{equation}\label{pde-i-d}
\left\{
\ba{lll}
z(0,\cdot) = z_0\,, \quad z_t(0,\cdot) = z_1 & {\rm in} & \Omega
\\[2mm]
v(0,\cdot) = v_0\,, \quad v_t(0,\cdot) = v_1 & {\rm in} & \Gamma_0
\\[2mm]
\theta(0,\cdot) = \theta_0 & {\rm in} & \Gamma_0\,.
\ea
\right.
\end{equation}
In the above system, $g(s)$ is a non-decreasing function describing
the dissipation which may affect the wave component of the system,
while the term $f(z)$ represents a nonlinear force;
$\nu$ is the outer normal vector, $\al$ and $\beta$ are positive constants;
the parameter $0\le\kappa\le 1$ has been introduced in order to cover
also the case of non-interacting wave and plate equations ($\kappa =0$).
The boundary term $\beta \kappa z_t|_{\Gamma_0} $ describes the acoustic
pressure.
The real parameter $Q$ describes in-plane forces applied to the plate, while
$p_0\in L_2(\Om)$ represents transversal forces.
The parameter $\gamma\in [0,1]$ describes the rotational inertia
of the plate filaments.

We begin our discussion of the long-time behaviour of the
initial/boundary value problem \eqref{pde-system}--\eqref{pde-i-d}
with the preliminary study of its well-posedness.
We shall see that well-posedness follows by general results pertaining
to differential equations
driven by (locally) Lipschitz perturbations of maximal monotone operators.

Our first main result, Theorem~\ref{t:main}, states the existence of
a global attractor for problem \eqref{pde-system} under rather general
conditions on the nonlinear functions $g$ and $f$; see Assumption~\ref{hypo-1}.
Since the dynamical system is gradient, the main issue to be explored
is the asymptotic compactness of the corresponding semi-flow.
In turn, to show this property we use an idea due to Khanmamedov~\cite{khan}
in the form suggested in \cite{chu-las-jde06} (see also \cite[Sect.~2.1]{mem}
and Proposition~\ref{prop:khan} below).

The subsequent main result, Theorem~\ref{t:main2}, concerns the dimension
and smoothness of the global attractor.
It requires additional hypotheses on the growth of the damping function $g$
and the nonlinear force $f$; see Assumption~\ref{hypo-2}.
In view of the new ideas and techniques developed in \cite{mem}, the principal
part of Theorem~\ref{t:main2}'s proof is the aforementioned 
{\em stabilizability estimate}; see Proposition~\ref{p:stabiliz}.
Indeed, this kind of inequalities---which are obtained by using energy methods
and are inspired by the ones used to show uniform stability of linear
and nonlinear PDE problems---have become a fundamental tool in the study of
finite dimensionality and smoothness of global attractors
(see, e.g., \cite{chu-las-1}, \cite{chu-las-preprint06}
and \cite{bcl-cpaa-06}).
It should be emphasized that not only these inequalities do not
follow from any general abstract result, but their derivation
strongly depends on the specific model under investigation.
In particular, achieving them in the present case requires a non trivial
modification of the techniques used in the isothermal case (\cite{bcl-cpaa-06})
as well as in the case of a thermoelastic von Karman plate model
(\cite{chu-las-preprint06}). This is accomplished in Lemma~\ref{le:f12}.

It is worth observing that so far global dynamics (i.e.~with
non-trivial attractors) of nonlinear structural acoustic models with multiple
equilibria as \eqref{pde-system} have been studied only in the isothermal case;
see \cite{bcl-cpaa-06}.
We also note that in the case $\kappa=0$---when the wave equation
and the thermoelastic system decouple---our main results yield
Corollary~3.7 in \cite{bcl-cpaa-06} for the wave component and
new results for the thermoelastic Berger model.

Finally, it is important to emphasize that the stabilizability estimate
shown in Proposition~\ref{p:stabiliz} is uniform with respect
to the parameters $\gamma,\kappa\in [0,1]$.
This enables us to prove that the size and the dimension of the attractor
admit bounds which are independent of $\gamma$ and $\kappa$.
This in turn naturally yields a third result (namely, Theorem~\ref{co:up-sc})
showing upper semi-continuity of the attractor with respect to the parameters
$\gamma$ and $\kappa$.
In particular, this semi-continuity property holds in the singular limit
$\gamma\to 0$, when the thermoelastic component of \eqref{pde-system}
changes character. See Remark~\ref{re:up-sc1} for more details.
For a general discussion about the continuity properties of attractors
with respect to parameters, see \cite{raquel2} and its references.

\smallskip
We conclude this Introduction by summarizing the contents of the paper's
various sections.
In Section~\ref{s:prelim} we introduce the essential elements of the
abstract set-up for problem \eqref{pde-system}, and we mainly discuss
its well-posedness. The proof of the corresponding result, that is
Theorem~\ref{thm:well-posed}, is outlined in the Appendix.
Section~\ref{s:statement} contains the statement of all our main results,
namely (i) Theorem~\ref{t:main} which asserts the existence of a compact
global attractor for system \eqref{pde-system}, then
(ii) Theorem~\ref{t:main2}, which describes the attractor's dimension and
smoothness, along with their Corollaries~\ref{c:wave} and \ref{c:plate},
and finally (iii) Theorem~\ref{co:up-sc} concerning the upper semi-continuity
of the attractor with respect to the parameters $\gamma$ and $\kappa$.
In Section~\ref{s:main-ineq} we show a preliminary inequality
which constitutes a first step for the proofs of both
Theorems~\ref{t:main} and~\ref{t:main2}.
Section~\ref{s:as-smooth} is mainly concerned with the asymptotic compactness
of the dynamical system generated by \eqref{pde-system}, as this is the core
of the proof of Theorem~\ref{t:main}.
In Section~\ref{s:stab-est} we derive the soughtafter stabilizability
estimate, in view of the novel and much challenging inequalities
established in Lemma~\ref{le:f12}.
The proofs of Theorem~\ref{t:main2} and Theorem~\ref{co:up-sc}
are finally given in Section~\ref{s:proof-t2}.


\section{Preliminaries}\label{s:prelim}
The notation below is largely standard within the literature.
For the reader's convenience, we just recall that
the symbols $||\cdot||_{\cO}$ and $(\cdot,\cdot)_{\cO}$
denote the norm and the inner product in $L_2(\cO)$.
The subscripts in $(\cdot,\cdot)_{\cO}$ and  $||\cdot||_{\cO}$
will be often omitted when apparent from the context.
We  denote $||\cdot||_{\si, \cO}$ the norm in the $L_2$-based
Sobolev space $H^\si(\cO)$.
Here we will have either $\cO=\Om$ or $\cO=\Gamma_0$.
We also denote by $H_0^\si(\cO)$ the closure of $C^\infty_0(\Om)$ in
$H^\si(\cO)$.

\subsection{Basic assumption}
We shall impose the following basic assumptions on the nonlinear
functions $g$ and $f$ which influence the wave component of the system.
\begin{assumption}
\label{hypo-0}
\begin{itemize}
\item
$g\in C(\mathbb{R})$ is a  non-decreasing function,
$g(0) =0$,
and there exists a constant $C>0$ such that
\begin{equation}\label{lip-g1}
|g(s)|\le C\left(1+|s|^{p} \right),
\quad s\in\R,
\end{equation}
where $1\le p\le5$ when $n=3$, while $1\le p<\infty$ for $n=2$.
\item
$f\in Lip_{loc}(\mathbb{R})$, and there exists a positive constant $M$ such that
\be\label{h:growth-for-f}
|f(s_1)- f(s_2)| \le M\left(1+ |s_1|^{q}+|s_2|^{q}\right)|s_1-s_2|
\quad \textrm{for} \; s_1,s_2\in \R\,,
\ee
where $q\le 2$ when $n=3$, and $q<\infty$ for $n=2$.
Moreover, the following dissipativity condition holds true:
\be\label{h:dissipativity-for-f}
\mu := \frac12\liminf_{|s|\rarrow \infty} \frac{f(s)}{s} > 0\,.
\ee
\item $p_0\in L_2(\Gamma_0)$.
\end{itemize}
\end{assumption}
Notice that the growth of both the nonlinearity (`source') $f$ 
and the damping $g$ affecting the wave component are allowed to 
be {\em critical} (see, e.g., \cite{chu-las-1,mem} for a discussion 
on this issue).

\subsection{Abstract formulation}
To study the dynamics of the PDE problem \eqref{pde-system}--\eqref{pde-i-d},
it is useful to recast it as an abstract evolution in an appropriate Hilbert
space.
The operators and spaces needed for this abstract set-up
are the following.
Let $A: \cD(A)\subset L_2(\Omega)\to L_2(\Omega)$
be the positive self-adjoint operator defined by
\[
A h = - \Delta h+\mu h\,, \quad \cD(A) =
\Big\{
h \in H^2(\Omega): \,\frac{\partial h}{\partial\nu}\Big|_{\Gamma} = 0\Big\}\,;
\]
where $\mu>0$ is given by (\ref{h:dissipativity-for-f}).
Next, let $N_0$ be the Neumann map from $L_2(\Gamma_0)$ to $L_2(\Omega)$,
defined by
\[
\psi = N_0 \phi \Longleftrightarrow
\Big\{(-\Delta +\mu) \psi = 0 \mbox{ in } \Omega\,;\
\frac{\partial\psi}{\partial\nu} \Big|_{\Gamma_0} = \phi\,, \
\frac{\partial\psi}{\partial\nu}\Big|_{\Gamma_1}
=0\Big\}\,.
\]
It will be used that
\be\label{an-property}
A^{3/4-{\epsilon}} N_0 \mbox{ continuous}:
L^2(\Gamma_0) \rarrow L^2(\Omega)\,,
\ee
which readily follows from the well known (see, e.g., \cite{las-trig-books}) 
regularity property
\[
N_0\;\mbox{continuous }: L_2(\Gamma_0) \rarrow
H^{3/2} (\Omega)\subset \cD(A^{3/4-{\epsilon}}),
\quad {\epsilon} > 0\,.
\]
It is also worth recalling the trace result
\be\label{traces-properties}
N_0^* A h = h|_{\Gamma_{0}} \quad \mbox{for}\quad h\in \cD(A)\,,
\ee
which can be extended by continuity to all $h \in H^{1}(\Omega)$.

Regarding the plate model, let us introduce
$\cA: \cD(\cA)\subset L_2(\Gamma_0) \rarrow L_2(\Gamma_0)$
as the positive, self-adjoint operator defined by
\[
\cA w = -\Delta w\,, \quad \cD(\cA) = H^2(\Gamma_0)\cap H^1_0(\Gamma_0)\,.
\]
It is well known that the fractional powers of $\cA$ are well defined;
we have, in particular,
\[
\|\cA^{1/2} v\|_{\Gamma_0} = ||\nabla v||_{\Gamma_0}
\quad \textrm{for any } v\in \cD(\cA^{1/2}) = H^1_0(\Gamma_0)\,.
\]
Finally, let us define the inertia operator $M_{\gamma} =I+\gamma \cA$, with obvious domain.
Then, according to the values of $\gamma$, one has
\begin{equation} \label{dom-mhalf}
V_\gamma :=\cD(M_{\gamma}^{1/2})\equiv
\begin{cases}\cD(\cA^{1/2})= H^1_0(\Gamma_0)&\gamma>0\,,
\\
L_2(\Gamma_0)& \gamma=0\,;
\end{cases}
\end{equation}
later we shall also need the dual space $V'_\gamma$
(where duality is with respect to the pivot space $L_2(\Gamma_0)$,
and we have $V_\gamma\subseteq L_2(\Gamma_0)\subseteq V'_\gamma$).

With the above dynamic operators, the initial/boundary value problem
\eqref{pde-system}--\eqref{pde-i-d} can be rewritten as the following
abstract second order system:
\begin{subequations}\label{abstract-system}
\bey
& & z_{tt} + A \left( z - \alpha\kappa  N_0 v_t\right) +  D(z_t) + F_1(z)=0\,,
\label{wave-eq} \\[2mm]
& & M_\gamma v_{tt} + \cA^2 v + \beta \kappa N_0^* A z_t
-\cA \theta + F_2(v)=0\,,
\label{plate-eq} \\[2mm]
& & \theta_{t} + \cA\theta + \cA v_t=0\,,
\label{heat-eq} \\[2mm]
& & z(0) = z^0\, \, z_t(0) = z^1\,; \;
v(0) = v^0\,,\, v_t(0) = v^1\,, \theta(0)=\theta^0\,,
\label{initial-data}
\eey
\end{subequations}
where we have introduced the Nemytski operators
\begin{equation}\label{Dh-mem}
D(h) := g(h)\,, \quad F_1(z) =  f(z)-\mu z\,,
\end{equation}
in \eqref{wave-eq}, whereas
\[
F_2(v)= -\left(Q-||\cA^{1/2}v||_{\Gamma_0}^2\right)\,\cA v - p_0
\]
in \eqref{plate-eq}.
Regarding the nonlinear force terms we have that
\be\label{wave-potential}
F_1(z) =  \Pi'(z) \quad\textrm{with}\quad
\Pi(z)=\int_\Om\int_0^{z(x)} \left( f(\xi)-\mu \xi\right) d\xi\; dx\,,
\ee
where $'$ stands for the Fr\'echet derivative (in an appropriate space).
It readily follows from \eqref{h:dissipativity-for-f} that
\be\label{estimate:wave-potential}
\Pi(z)\ge \delta_f \|z\|^2_{\Om} -M_f,\quad z\in H^1(\Om),
\ee
for some positive constants $\delta_f$ and $M_f$.
Similarly, we have that
\be\label{plate-potential}
F_2(v) =  \Phi'(v)\quad\textrm{with}\quad
\Phi(v) = \frac14 ||\cA^{1/2}v||_{\Gamma_0}^4 -\frac{Q}2
||\cA^{1/2}v||_{\Gamma_0}^2 - (p_0,v)\,.
\ee

The state spaces $Y_1$ for the wave component $[z,z_t]$ and
$Y_2$ for the plate component $[v,v_t]$ of system \eqref{abstract-system}
are given by
\[
\ba{l}
Y_1 := \cD(A^{1/2}) \times L_2(\Omega)\equiv H^1(\Om)\times L_2(\Omega)\,,
\\[1mm]
Y_2 := \cD(\cA) \times V_{\gamma}
\equiv \left[H^2(\Gamma_0)\cap H^1_0(\Gamma_0)\right]
\times \cD(M_{\gamma}^{1/2})\,,
\end{array}
\]
with respective norms
\[
||(z_1,z_2)||_{Y_1}^2=\|A^{1/2}z_1\|_{\Om}^2 +\| z_2\|_{\Om}^2\,,
\quad
||(v_1,v_2)||_{Y_2}^2=\|\cA v_1\|_{\Gamma_0}^2
+\|M_\gamma^{1/2} v_2\|_{\Gamma_0}^2\,,
\]
and $V_\gamma$ as in \eqref{dom-mhalf}.
The natural state space for the thermal component $\theta$
is $Y_3 =L_2(\Gamma_0)$.
The phase space for problem \eqref{abstract-system} is then
\be\label{state-space}
Y = Y_1 \times Y_2 \times Y_3= \cD(A^{1/2}) \times L^2(\Omega)
\times \cD(\cA ) \times V_{\gamma}\times L_2(\Gamma_0)\,,
\ee
endowed with the norm
\be\label{norm}
||y||_Y^2=||(z_1,z_2,v_1,v_2, \theta)||_Y^2
:= \beta ||(z_1,z_2)||_{Y_1}^2 + \alpha\left( ||(v_1,v_2)||_{Y_2}^2
+\|\theta\|^2_{\Gamma_0}\right)
\ee
(and obvious corresponding inner product).

An important consequence of Assumption \ref{hypo-0}
is that the nonlinear operator $F_1$ is locally Lipschitz
from $H^1(\Om)$ into $L^2(\Omega)$. Namely,
\be \label{F}
||F_1(z_1) - F_1(z_2)||_{\Omega}\le C(\rho) ||z_1-z_2||_{1,\Omega}\,,
\quad ||z_i||_{1,\Omega} \le \rho <\infty\,,
\ee
where $C(\rho)$ denotes a function that is bounded for bounded arguments.
It is important to emphasize that $F_1$ is bounded as an operator from
$H^1(\Om)$ into $L^2(\Omega)$, yet it is {\it not compact}.
This fact accounts for the adjective `critical' pertaining to the parameter
$q$ and the nonlinear term $F_1$.
It is worth noting that the mapping $F_2$ is critical in the case
$\gamma=0$ only.

The natural (nonlinear) energy functions associated with the solutions to the
{\em uncoupled} wave and plate models are given, respectively, by
\begin{subequations}\label{total-energies}
\bey
& & \cE_z(z(t),z_t(t)) := E_z^0(z(t),z_t(t)) + \Pi(z(t))\,,
\label{wave-tot-energy}
\\[2mm]
& & \cE_v(v(t),v_t(t)) := E_v^0(v(t),v_t(t)) + \Phi(v(t))\,,
\label{plate-tot-energy}
\eey
\end{subequations}
where we have set
\begin{subequations}\label{zero-energies}
\bey
& & E_z^0(t)\equiv E_z^0(z(t),z_t(t)) = \frac12\Big\{\|A^{1/2}z(t)\|_{\Omega}^2
+ \|z_t(t)\|_{\Omega}^2\Big\}\,,
\label{z-zero-energy}\\[2mm]
& & E_v^0(t)\equiv E_v^0(v(t),v_t(t)) = \frac12\Big\{\|\cA v(t)\|_{\Gamma_0}^2
+ \|M^{1/2}_\gamma v_t(t)\|_{\Gamma_0}^2\Big\}\,.
\eey
\end{subequations}
Since both energy functionals in \eqref{total-energies} may be negative,
it is convenient to introduce the following positive energy functions
\begin{eqnarray*}
& & E_z(z,z_t):=E_z^0(z,z_t)+\Pi(z)+M_f= \cE_z(z,z_t) +M_f\,,
\\[1mm]
& & E_v(v,v_t):=E_v^0(v,v_t)+\frac14 ||\cA^{1/2}v||^4= \cE_v(v,v_t)+
\frac{Q}2 ||\cA^{1/2}v||^2 + (p_0,v)\,,
\end{eqnarray*}
where $M_f$ is the constant in \eqref{estimate:wave-potential}.
The thermal energy is described by
$\cE_\theta(t):=\cE_\theta(\theta(t))  = \frac12\|\theta(t)\|^2_{\Gamma_0}$.
\\
Thus, let us introduce the total energy
$\cE(t)= \cE(z(t),z_t(t),v(t),v_t(t),\theta(t))$ of the system, namely
\be\label{total-energy}
\cE(t)=\cE(z(t),z_t(t),v(t),v_t(t),\theta(t))
:=\beta \cE_z(z,z_t) + \alpha
\Big( \cE_v(v,v_t)+\hf \|\theta\|^2_{\Gamma_0}\Big) \,,
\ee
whose positive part is given by
\be\label{total-energy-plus}
E(t)= E(z,z_t,v,v_t,\theta)
:=\beta E_z(z,z_t) + \alpha \Big(E_v(v,v_t)+\hf \|\theta\|^2_{\Gamma_0}\Big)\,.
\ee

It is easy to see from the structure of the energy functionals
and in view of (\ref{estimate:wave-potential}) and (\ref{plate-potential})
that for any $\al,\beta > 0$ there exist positive constants
$c$, $C$, and $M_0$ such that
\be\label{energy-bounds}
c E(z,z_t,v,v_t,\theta)-M_0\le \cE(z,z_t,v,v_t,\theta)
\le C E(z,z_t,v,v_t,\theta) + M_0\,,
\ee
where  $\cE$ and $E$ are the energies defined in \eqref{total-energy} and
\eqref{total-energy-plus}. One can also see that the constant $M_0$
depends linearly on $\alpha$ and $\beta$, i.e.
$M_0= \alpha M_0^1 +\beta M_0^2$.


\subsection{Well-posedness}
To study well-posedness of problem \eqref{pde-system}--\eqref{pde-i-d},
we may view the corresponding abstract system \eqref{abstract-system}
as a special case of a general second-order (in time) equation studied
in \cite{las-cmbs}.
This monograph includes local and global existence (and uniqueness) results
pertaining to the corresponding (strong and generalized) solutions.
The reader is referred to \cite[Section~2.6]{las-cmbs}, focused on
a structural acoustic model, yet not including thermal effects;
for the present case see Remark~2.6.2 and the specific references quoted
therein.
However, the `prototype' abstract equation explored in \cite{las-cmbs} is
motivated by PDE models which display nonlinear terms {\em on the boundary},
which renders the analysis more challenging and the application of
the results more involved than is actually needed in the present case.
As we shall see, the key feature here is that the first order system
corresponding to \eqref{abstract-system} is a
Lipschitz perturbation of a m-monotone system with suitable a-priori bounds.
Hence, in order to establish local existence and uniqueness of
the corresponding solutions we choose to invoke the recent result
\cite[Theorem 7.2]{chu-eller-las-1}.
For the reader's convenience, the proof of well-posedness is outlined
in the Appendix.

\smallskip
In order to make our statements precise, we need to
introduce the concepts of strong and generalized solutions.
\begin{definition}\label{str-sol-2ord}
A triplet of functions $(z(t),v(t),\theta(t))$
which satisfy the initial conditions \eqref{initial-data} and such that
\[
(z(t),v(t))\in C([0,T],\cD (A^{1/2})\times \cD (\cA))\cap
 C^1([0,T],L_2(\Om)\times V_\gamma)
\]
and $\theta\in C([0,T],L_2(\Gamma_0))$ is said to be
\begin{enumerate}
\item[{\bf (S)}] a {\em strong solution} to
problem (\ref{abstract-system})  on the interval $[0,T]$, iff
\begin{itemize}
  \item for any $0<a<b<T$ one has
\[
(z_t(t),v_t(t))\in L_1([a,b],\cD (A^{1/2})\times \cD(\cA^{1/2})),
\quad  \theta_t\in L_1([a,b],L_2(\Gamma_0))
\]
and
\[
(z_{tt}(t),v_{tt}(t))\in  L_1([a,b],L_2(\Om)\times V_\gamma)\,;
\]
\item
$A [z(t) -\al\kappa N_0v_t(t)] + D (z_t(t)) \in  L_2(\Om)$,
$ \cA^2 v(t)\in  V_\gamma'$ and  $\theta(t)\in\cD(\cA)$ for almost all $t\in [0,T]$;
\item equations \eqref{wave-eq} \eqref{plate-eq} and \eqref{heat-eq} are
satisfied in $L_2(\Om)\times V_\gamma'\times L_2(\Gamma_0)$
for almost all $t\in [0,T]$;
\end{itemize}
  \item[{\bf (G)}]
a {\em generalized solution} to problem~(\ref{abstract-system}) on the interval
$[0,T]$, iff there exists a sequence $\{(z_n(t),v_n(t)),\theta_n(t)\}_n$
of strong solutions to \eqref{abstract-system}, with initial data
$(z_n^0,z_n^1,v_n^0,v_n^1,\theta_n^0)$
(in place of $(z^0,z^1,v^0,v^1,\theta^0)$), such that
\[
\lim_{n\to\infty}
\max_{t\in[0,T]}\left\{ \|\partial_t z(t)-\partial_t z_n(t)\|_{\Om}
+ \| A^{1/2}\left(z(t)-z_n(t)\right)\|_{\Om}\right\}= 0,
\]
\[
\lim_{n\to\infty}
\max_{t\in[0,T]}\left\{ \|M_\gamma^{1/2}
\left(\partial_t v(t)-\partial_t v_n(t)\right)\|_{\Gamma_0}
+ \|\cA \left(v(t)-v_n(t)\right)\|_{\Gamma_0}\right\}= 0.
\]
and
\[
\lim_{n\to\infty}
\max_{t\in[0,T]}\| \theta(t)-\theta_n(t)\|_{\Gamma_0}= 0.
\]
\end{enumerate}
\end{definition}

In the statement of well-posedness of problem \eqref{pde-system}, we shall
also need the function space defined by
\begin{equation} \label{w-gamma}
W_\gamma:= \big\{\,u\in \cD(\cA): \; \cA^2u\in V'_\gamma\, \big\}\,,
\end{equation}
where $V'_\gamma$ denotes the dual space of $V_\gamma$ in \eqref{dom-mhalf}.
It is readily verified that
\begin{equation}\label{w-gamma1}
W_\gamma=
\begin{cases}
\cD(\cA^{3/2})\equiv
\left\{ u\in H^3(\Gamma_0): \; u =\Delta u=0
\; \textrm{on } \, \partial\Gamma_0\right\}
& \textrm{if $\gamma>0$}\,;
\\[1mm]
\cD(\cA^2)\equiv \left\{ u\in H^4(\Gamma_0): \; u =\Delta u=0
\; \textrm{on } \, \partial\Gamma_0\right\}
& \textrm{if $\gamma=0$}\,.
\end{cases}
\end{equation}

\begin{theorem}   \label{thm:well-posed}
Under Assumption~\ref{hypo-0} the PDE system~\eqref{pde-system} is
well-posed on
\[
Y=H^1(\Omega)\times L_2(\Omega)\times
[H^2(\Gamma_0)\cap H^1_0(\Gamma_0)]\times V_\gamma\times L_2(\Gamma_0),
\]
i.e. for any $(z^0,z^1,v^0,v^1,\theta^0)=:y_0\in Y$
there exists a unique generalized solution
$y(t)= (z(t),z_t(t),v(t),v_t(t),\theta(t))$ which depends
continuously on initial data.
This solution satisfies the energy \emph{inequality}
\be\label{energy-ineq}
\cE(t) + \beta \, \int_s^t (D(z_t),z_t)_\Om\,d\tau
+ \alpha \, \int_s^t \|\cA^{1/2} \theta\|^2_{\Gamma_0}\,d\tau \le \cE(s)\,,
\qquad 0\le s\le t\,,
\ee
with the total energy $\cE(t)$ given by \eqref{total-energy}.
Moreover, if initial data are such that
\[
z^0\,,\, z^1\in \cD(A^{1/2})\,,\quad
v^0\in W_\gamma\,, \; v^1\in \cD(\cA)\,,
\quad  \theta^0\in \cD(\cA)
\]
and
\[
A [z^0 -\al\kappa N_0v^1]+   D (z^1) \in  L_2(\Om)\,,
\]
then there exists a unique strong solution $y(t)$ satisfying the
{\em energy identity:}
\be\label{energy-identity}
\cE(t) + \beta \, \int_s^t (D(z_t),z_t)_\Om\,d\tau
+ \alpha \,  \int_s^t \|\cA^{1/2} \theta\|^2_{\Gamma_0}\,d\tau   = \cE(s)\,,
\qquad 0\le s\le t\,.
\ee
Both strong and generalized solutions satisfy the inequality
\begin{equation}\label{energy-decr}
\cE(t)\le \cE(s)
\quad \textrm{for }\; t\ge s\,,
\end{equation}
where $\cE(t)\equiv \cE(z(t),z_t(t),v(t),v_t(t),\theta(t))$.
This implies, in particular,
\begin{equation}\label{energy-est2}
E(z(t),z_t(t),v(t),v_t(t),\theta(t))
\le C\left( 1+ E(z^0,z^1,v^0,v^1, \theta^0)\right)\,,
\quad \textrm{for }\; t\ge 0
\end{equation}
(see \eqref{energy-bounds}).
\end{theorem}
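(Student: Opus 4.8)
The plan is to recast the abstract system \eqref{abstract-system} as a first-order evolution equation on the phase space $Y$ and to apply the perturbation result \cite[Theorem 7.2]{chu-eller-las-1}, which provides existence, uniqueness and continuous dependence for locally Lipschitz perturbations of maximal monotone (m-monotone) generators. Writing $U=(z,z_t,v,v_t,\theta)$, I would rewrite \eqref{wave-eq}--\eqref{heat-eq} as $U_t + \mfA U + \cF(U)=0$, where the principal operator $\mfA$ collects the unbounded linear part together with the monotone damping $D(\cdot)=g(\cdot)$ and the conservative coupling terms, while $\cF(U)=(0,F_1(z),0,\Mg^{-1}F_2(v),0)$ gathers the nonlinear forces. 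Since $g$ is only continuous, monotone, and of critical growth, it must be incorporated into the m-monotone part $\mfA$ rather than treated as a Lipschitz perturbation; by contrast $F_1$ is locally Lipschitz from $H^1(\Om)$ into $L_2(\Om)$ by \eqref{F}, and $F_2$ is locally Lipschitz owing to the polynomial structure of the Berger nonlinearity, so both are admissible perturbations in the required sense.

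The core step is to prove that $\mfA$ is maximal monotone on $Y$ with the norm \eqref{norm}. For monotonicity I would compute $(\mfA U, U)_Y$ and check that every coupling contribution cancels. The conservative elastic--kinetic exchanges (the pairings of the stiffness terms $Az$ and $\cA^2 v$ against the velocities) vanish by self-adjointness of $A$ and $\cA$; the wave--plate boundary terms produce $-\beta\al\ka\,(v_t, z_t|_{\Ga_0})_{\Ga_0}$ and $+\al\beta\ka\,(z_t|_{\Ga_0}, v_t)_{\Ga_0}$, which annihilate one another once the trace identity \eqref{traces-properties}, $N_0^*A h = h|_{\Ga_0}$, is used to express the coupling operators $AN_0$ and $N_0^*A$ as boundary traces; and the plate--thermal terms $-\al(\cA\theta,v_t)_{\Ga_0}$ and $+\al(\cA v_t,\theta)_{\Ga_0}$ cancel by self-adjointness of $\cA$. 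What survives is the nonnegative dissipation $\beta(D(z_t),z_t)_\Om + \al\|\cA^{1/2}\theta\|^2_{\Ga_0}\ge 0$, using $g(0)=0$ and monotonicity. For maximality I would establish the range condition $R(\lambda I+\mfA)=Y$ for some $\lambda>0$, i.e. solve the associated static coupled system; the skew-symmetric couplings do not spoil coercivity (guaranteed by the positive shift $\mu$ in $A$), and surjectivity in the damping component follows from the standard theory of maximal monotone Nemytski operators under the growth bound \eqref{lip-g1}.

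With $\mfA$ maximal monotone and $\cF$ locally Lipschitz, \cite[Theorem 7.2]{chu-eller-las-1} yields a unique local strong solution for regular data and a unique local generalized solution for any $y_0\in Y$, along with continuous dependence (via the local Lipschitz bounds and Gronwall's inequality applied to the difference of two solutions). To pass from local to global I would derive the energy relation: for strong solutions, taking the $Y$-inner product with $U_t$ and using $\frac{d}{dt}\Pi(z)=(F_1(z),z_t)_\Om$ and $\frac{d}{dt}\Phi(v)=(F_2(v),v_t)_{\Ga_0}$ (see \eqref{wave-potential}, \eqref{plate-potential}) gives, after the cancellations above, the energy identity \eqref{energy-identity}. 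Combined with the coercivity \eqref{energy-bounds} and the dissipativity bound \eqref{estimate:wave-potential}, this produces the a-priori estimate \eqref{energy-est2}, which precludes finite-time blow-up and hence extends the solution globally; the monotone decay \eqref{energy-decr} is then immediate. For generalized solutions I would approximate by strong ones and pass to the limit; since only lower semicontinuity of $\cE$ and of the dissipation integrals survives, the identity degrades to the inequality \eqref{energy-ineq}.

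The main obstacle is the verification of maximality, that is, the range condition for $\lambda I+\mfA$, because of the unbounded boundary coupling operators $AN_0$ and $N_0^*A$ and the need to solve the fully coupled static elliptic problem within a single functional framework valid for both $\ga>0$ and $\ga=0$ (the inertia operator $\Mg$ being boundedly invertible in each case). Here the regularity \eqref{an-property} of the Neumann map $N_0$ and the trace identity \eqref{traces-properties} are exactly what render these terms well-defined at the energy level and allow the surjectivity argument to close. A secondary technical point is the limiting passage for the energy relation: the critically growing nonlinear damping $g$ forces one to retain only the inequality \eqref{energy-ineq} for generalized solutions rather than the full identity \eqref{energy-identity}.
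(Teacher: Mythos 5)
Your proposal follows essentially the same route as the paper's Appendix: recasting \eqref{abstract-system} as a first-order equation $y'+Ly=C(y)$ with the damping and linear couplings absorbed into a maximal monotone operator and $F_1,F_2$ treated as locally Lipschitz perturbations, invoking \cite[Theorem 7.2]{chu-eller-las-1}, and then deriving the energy identity for strong solutions, passing to the limit for generalized ones, and using \eqref{energy-est2} to globalize. The cancellation computation you spell out (via \eqref{traces-properties} and self-adjointness of $\cA$) is exactly the content the paper delegates to \cite[Appendix~A]{bcl-cpaa-06}, so the argument is correct and not materially different.
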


We shall sketch in the Appendix a self-contained proof, tailored for
the specific problem under investigation.

\begin{remark}\label{re:weak-so}
\begin{rm}
The existence of generalized solutions established in
Theorem~\ref{thm:well-posed} is obtained by using the
theory of nonlinear semigroups.
These solutions are defined as strong limits of regular (strong)
solutions, as in the part $\mathbf{(G)}$ of Definition~\ref{str-sol-2ord}.
This does not necessarily mean that generalized solutions
satisfy a variational equality.
However, in view of the regularity of $g$ and $f$ we may compute appropriate
limits and obtain the variational form stated below.
Indeed, using similar arguments as in \cite{chu-las-jde06} one can prove that
any generalized solution $y(t)=(z(t),z_t(t),v(t),v_t(t),\theta(t))$
to problem \eqref{abstract-system} is also {\em weak}, i.e. it satisfies
the following system of (variational) equations:
\begin{subequations}\label{var-equations}
\bey
& &  \frac{d}{dt} (z_{t},\phi)_\Om + (\g z,\g\phi)_\Om +  (g(z_t),\phi)_\Om
\nonumber
\\
& & \myspace
 - \alpha\kappa (v_t,\phi)_{\Gamma_0} + (f(z),\phi)_\Om=0\,,
\label{wave-eq-var}
\\[1mm]
& &  \frac{d}{dt}(\Mg v_{t}+\beta \kappa  z,\psi)_{\Gamma_0}
+(\Delta v, \Delta \psi)_{\Gamma_0}-(\theta,\Delta  \psi)_{\Gamma_0}
\nonumber
\\
& & \myspace  +\left[Q -\|\nabla v\|_{\Gamma_0}^2 \right]
(\Delta v,\psi)_{\Gamma_0} = (p_0,\psi)_{\Gamma_0}\,,
\label{plate-eq-var}
\\[2mm]
& & \frac{d}{dt}(\theta,\chi)_{\Gamma_0}
+(\nabla\theta, \nabla\chi)_{\Gamma_0} +(\nabla v_t, \nabla\chi)_{\Gamma_0}=0
\label{heat-eq-var}
\eey
\end{subequations}
for any $\phi\in  H^1(\Om)$, $\psi\in  H^2(\Gamma_0)\cap H_0^1(\Gamma_0)$
and $\chi\in H_0^1(\Gamma_0)$
in the sense of distributions.
\end{rm}
\end{remark}

\smallskip
Theorem~\ref{thm:well-posed} enables us to define a dynamical system
$(Y,S_t)$ with the phase space $Y$ given by \eqref{state-space} and with
the evolution operator $S_t\, :\, Y\to Y$ given by the relation
\[
S_t y_0=(z(t),z_t(t),v(t),v_t(t),\theta(t))\,,
\quad y_0=(z^0,z^1,v^0,v^1,\theta^0)\,,
\]
where $(z(t),v(t),\theta(t))$ is a generalized solution to
\eqref{abstract-system}.
Moreover, the monotonicity of the damping operator $D$, combined with
the Lipschitz condition in \eqref{F} and the energy's bound \eqref{energy-est2}
imply, by a pretty routine argument,
that the semi-flow $S_t$ is locally Lipschitz on $Y$.
More precisely, there exists $a>0$ and $b(\rho)>0$ such that
\begin{equation}\label{SF}
||S_t y_1 - S_t y_2 ||_Y \leq a\, e^{b(\rho) t}
|| y_1 - y_2 ||_Y,\quad \forall\; \|y_i\|_Y\le \rho,\; t\ge 0\,.
\end{equation}

\subsection{Energy functionals and stationary solutions}
We conclude this section by discussing several properties of the energy
functionals and stationary solutions.
\\
It follows from \eqref{energy-bounds} that the energy
$\cE(z_0,z_1,v_0,v_1,\theta_0)$ is bounded from below on $Y$
and $\cE(z_0, z_1, v_0, v_1,\theta_0)\to +\infty$ when
$\|(z_0, z_1, v_0, v_1,\theta_0)\|_Y\to +\infty$.
This, in turn, implies that there exists $R_*>0$ such that the set
\begin{equation}\label{wr-set}
\cW_R=\left\{ y=(z_0, z_1, v_0, v_1,\theta_0)\in Y\, :\;
\cE(z_0, z_1, v_0, v_1,\theta_0)\le R \right\}
\end{equation}
is a non-empty bounded set in $Y$ for all $R\ge R_*$. Moreover
any bounded set $B\subset Y$ is contained in $\cW_R$ for some $R$ and,
as it follows from  (\ref{energy-decr}),
the set $\cW_R$
is forward invariant \wrt~the semi-flow $S_t$, i.e. $S_t\cW_R\subset\cW_R$ for all
$t>0$.
Thus, we can consider the restriction $(\cW_R, S_t)$ of the dynamical system
$(Y, S_t)$ on $\cW_R$, $R\ge R_*$.

\medskip
We introduce next the set of stationary points of $S_t$ denoted by $\cN$,
\[
 \cN=\left\{ V\in Y\; :\; S_tV=V\mbox{~for all~} t\ge 0\right\}\,.
\]
Every stationary point $V$ has the form $V=(z,0,v,0,0)$,
where $z\in H^1(\Om)$ and  $v\in  H^2(\Gamma_0)\cap H_0^1(\Gamma_0)$ are,
respectively, weak (variational) solution to the problems
\[
- \Delta z + f(z) = 0 \quad \textrm{in} \;\Omega\,,
\qquad \frac{\partial z}{\partial\nu}  = 0
\quad \textrm{on} \;\Gamma\,,
\]
and
\[
\left\{
\ba{lll}
\displaystyle
\Delta^2 v + \Big[Q -\int_{\Gamma_0}|\nabla v(x,t)|^2 dx\Big]
\,\Delta v   = p_0 & & {\rm on}\; \Gamma_0
\\[3mm]
v=\Delta v = 0 & & {\rm on}\; \partial\Gamma_0\,.
\ea
\right.
\]
It is clear that the set $\cN$ of stationary points does not depend on
$\gamma$ and $\kappa$. Therefore
using the properties of the potentials $\Pi$ and $\Phi$ given by
\eqref{wave-potential} and \eqref{plate-potential} one can easily
prove the following assertion.

\begin{lemma} \label{lemma:b-stat}
Under Assumption~\ref{hypo-0} the set $\cN$ of stationary points
for the semi-flow $S_t$ generated by equations (\ref{abstract-system})
is a closed bounded set in $Y$, and hence there exists $R_{**}\ge R_*$ (independent
of $\gamma$ and $\kappa$)
such that $\cN\subset\cW_R$ for every $R\ge R_{**}$.
\end{lemma}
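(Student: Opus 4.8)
The plan is to prove the two structural properties of $\cN$ — closedness and boundedness in $Y$ — separately, and then to read off the existence of $R_{**}$ from the boundedness together with the upper energy bound. Throughout I shall use the explicit characterization of $\cN$ recorded just before the statement: every $V\in\cN$ has the form $V=(z,0,v,0,0)$, where $z\in H^1(\Om)$ and $v\in H^2(\Gamma_0)\cap H^1_0(\Gamma_0)$ are weak solutions of the two elliptic boundary value problems displayed above, neither of which involves $\gamma$ or $\kappa$. Since $\theta=0$ and $z_t=v_t=0$ for such $V$, its norm reduces to $\|V\|_Y^2=\beta\|A^{1/2}z\|_\Om^2+\alpha\|\cA v\|_{\Gamma_0}^2$, a quantity that is manifestly independent of $\gamma$; this is ultimately what makes all the constants uniform in the two parameters.

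For the boundedness of $\cN$ I would derive a priori estimates by testing each stationary equation against its own solution. Testing the wave equation with $z$ gives $\|\nabla z\|_\Om^2+(f(z),z)_\Om=0$; the dissipativity condition \eqref{h:dissipativity-for-f} yields a pointwise lower bound $f(s)s\ge\mu s^2-c_0$ valid for all $s$ (using continuity of $f$ for bounded arguments), whence $\|\nabla z\|_\Om^2+\mu\|z\|_\Om^2\le c_0|\Om|$ and thus a uniform bound on $\|z\|_{H^1(\Om)}\sim\|A^{1/2}z\|_\Om$. Testing the plate equation with $v$ gives $\|\cA v\|_{\Gamma_0}^2+\|\cA^{1/2}v\|_{\Gamma_0}^4-Q\|\cA^{1/2}v\|_{\Gamma_0}^2=(p_0,v)_{\Gamma_0}$; absorbing the right-hand side into $\|\cA v\|_{\Gamma_0}$ via Young's and Poincaré's inequalities and using the elementary estimate $Qx-x^2\le Q^2/4$ to control the indefinite quadratic term, I obtain uniform bounds on both $\|\cA v\|_{\Gamma_0}$ and $\|\cA^{1/2}v\|_{\Gamma_0}$. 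All these bounds depend only on $f$, $Q$, $p_0$, $\Om$ and $\Gamma_0$, hence are independent of $\gamma$ and $\kappa$, and therefore $\cN$ is bounded in $Y$ uniformly in the two parameters.

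For closedness I would invoke the continuity of the semi-flow: by the local Lipschitz bound \eqref{SF}, each map $V\mapsto S_tV$ is continuous on $Y$, so $\{V\in Y:S_tV=V\}$ is closed for every fixed $t\ge0$, and $\cN=\bigcap_{t\ge0}\{V:S_tV=V\}$ is closed as an intersection of closed sets (equivalently, if $V_n\in\cN$ and $V_n\to V$ in $Y$, then $S_tV=\lim_n S_tV_n=\lim_n V_n=V$). Finally, since $\cN$ is bounded and the positive energy $E$ is bounded on bounded subsets of $Y$, the upper estimate in \eqref{energy-bounds} shows $\cE$ is bounded above on $\cN$; setting $R_0:=\sup_{V\in\cN}\cE(V)<\infty$ and $R_{**}:=\max\{R_*,R_0\}$, every $V\in\cN$ satisfies $\cE(V)\le R_0\le R$ for all $R\ge R_{**}$, i.e. $\cN\subset\cW_R$. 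Because $\cE$ evaluated at a stationary point involves neither $\gamma$ (the inertial contribution $M_\gamma^{1/2}v_t$ vanishes) nor $\kappa$, the value $R_0$, and hence $R_{**}$, is independent of $\gamma$ and $\kappa$, as claimed.

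The only genuinely nonlinear step, and the one I expect to require the most care, is the plate a priori estimate: the indefinite in-plane term $[Q-\|\cA^{1/2}v\|_{\Gamma_0}^2]\cA v$ must be handled so that the coercive biharmonic and quartic contributions dominate, and the bound $Qx-x^2\le Q^2/4$ is precisely what makes this work uniformly in $Q$ irrespective of its sign. Everything else — the dissipativity-driven wave bound, the closedness via continuity of $S_t$, and the extraction of $R_{**}$ — is routine once the characterization of $\cN$ and the energy comparison \eqref{energy-bounds} are in hand.
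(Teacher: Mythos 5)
Your proof is correct and follows exactly the route the authors have in mind when they say the lemma "can easily be proved" from the properties of the potentials $\Pi$ and $\Phi$: testing the stationary wave equation with $z$ and using the dissipativity condition \eqref{h:dissipativity-for-f} (equivalently \eqref{estimate:wave-potential}), testing the stationary plate equation with $v$ and letting the quartic term dominate the indefinite $Q$-term, observing that neither elliptic problem involves $\gamma$ or $\kappa$, and extracting $R_{**}$ from \eqref{energy-bounds}. The paper omits these details entirely, so your write-up simply supplies them; no discrepancy to report.
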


Later we will also need the notion of {\it unstable manifold}
$M^u(\cN)$ emanating from the set $\cN$, which is defined as the set of
all $W\in Y$ such that there exists a full trajectory
$\gamma=\{ W(t)\, :\, t\in\R\}$ with the properties
\[
W(0)=W\mbox{~and~} \lim_{t\to -\infty}{\rm dist}_{Y}(W(t),\cN)=0\,.
\]
We finally recall that a continuous curve
$\gamma=\{ W(t)\, :\, t\in\R\}\subset Y$
is said to be a full trajectory if $S_tW(\tau)=W(t+\tau)$ for any
$t\ge 0$ and $\tau\in\R$.

\section{The statement of main results}\label{s:statement}
The goal of the present paper is to show the existence of a global attractor
for the dynamical system generated by problem \eqref{pde-system}, and to
study its properties.

Let us recall ({\em cf}. \cite{babin,chueshov-book,hale,Tem97})
that a global attractor for a dynamical system $(X,S_t)$
on a complete metric space $X$ is a closed bounded set $\mfA$ in
$X$ which is invariant (i.e. $S_t\mfA =\mfA$ for any $t>0$) and
uniformly attracting, i.e.
\[
\lim_{t\to +\infty}\sup_{y\in B} {\rm dist}_X
\{ S_ty, \mfA\}=0\quad \mbox{ for any bounded set}\quad B\subset X\,.
\]
The {\em fractal dimension} $\dim_f M$ of a compact set $M$ is defined by
\[
\dim_fM=\limsup_{\eps\to 0}\frac{\ln N(M,\eps)}{\ln (1/\eps)}\,,
\]
where $N(M,\eps)$ is the minimal number of closed sets of
diameter $2\eps$ which cover the set $M$.

\smallskip
To prove the existence of a global attractor for problem
\eqref{abstract-system} we need an additional hypothesis concerning
the damping function $g$.

\begin{assumption}  \label{hypo-1}
Besides to Assumption~\ref{hypo-0}, suppose that
for any $\eps>0$ there exists $c_\eps>0$ such that
\begin{equation}\label{lip-g2}
s^2 \le \eps + c_\eps  s g(s)\quad \textrm{for} \quad  s\in\R\,.
\end{equation}
\end{assumption}
Let us recall from \cite[Remark~3.2]{bcl-cpaa-06} that \eqref{lip-g2} holds true,
e.g., when
(i) $g(s)$  is non-decreasing on $\R$ and
{\em strictly} increasing in some (small) neighbourhood of $0$, and
(ii) we have that
\[
\liminf_{|s|\to\infty}\frac{g(s)}{s}>0\,.
\]
In particular, Assumption~\ref{hypo-1} allows the damping function $g$
to be constant on some closed finite intervals which are away from zero.

Our first main result provides the existence of a global attractor for problem
\eqref{abstract-system}, as well as a description of its structure.

\begin{theorem}\label{t:main}
Under Assumption~\ref{hypo-1}  the dynamical
system $(Y,S_t)$ generated by problem \eqref{abstract-system} has a
compact global attractor $\mathfrak{A}$ which coincides with the unstable
manifold $M^u(\cN)$ emanating from the set $\cN$ of stationary points for
$S_t$, namely $\mathfrak{A}\equiv M^u(\cN)$.
Moreover,
\begin{equation}\label{stb-to-n}
\lim\limits_{t\to +\infty}{\rm dist}_{Y}(S_tW,\cN)=0\quad
\mbox{for any}\quad W \in Y\,,
\end{equation}
and for any trajectory $U= (z(t),z_t(t),v(t),v_t(t),\theta(t))$
from the attractor $\mfA$ we have that
\begin{equation} \label{first-bnd}
\|z\|_{1,\Om}^2 +\|z_t\|_\Om^2
+ \|\Delta v\|_{\Gamma_0}^2+ \|v_{t}\|_{\Gamma_0}^2+\gamma\|\nabla v_{t}\|_{\Gamma_0}^2
+\|\theta\|_{\Gamma_0}^2\le \widetilde{R}^2\,,
\end{equation}
where $\widetilde{R}$ does not depend on $\gamma,\kappa\in [0,1]$.
\end{theorem}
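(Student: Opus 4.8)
The plan is to recognize $(Y,S_t)$ as a \emph{gradient} dynamical system and to combine its gradient structure with asymptotic smoothness, invoking the abstract theory of attractors for gradient systems (see, e.g., \cite{chueshov-book,mem}): an asymptotically smooth gradient system whose strict Lyapunov function is bounded above on bounded sets and has bounded sub-level sets, and whose equilibrium set is bounded, possesses a compact global attractor which coincides with $M^u(\cN)$ and attracts every trajectory to $\cN$. Three ingredients must therefore be supplied: (i) a strict Lyapunov function; (ii) boundedness of the relevant sub-level sets and of $\cN$; (iii) asymptotic smoothness. The uniform bound \eqref{first-bnd} will then be read off from the identity $\mfA=M^u(\cN)$.

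For (i) I would take the total energy $\cE$ of \eqref{total-energy} as Lyapunov candidate. It is continuous on $Y$ and, by \eqref{energy-decr}, non-increasing along $S_t$. To see it is \emph{strict}, suppose $\cE(S_t y)=\cE(y)$ for all $t\ge0$ along a trajectory. Since $(g(z_t),z_t)_\Om\ge0$ (as $g$ is non-decreasing with $g(0)=0$) and $\|\cA^{1/2}\theta\|^2_{\Gamma_0}\ge0$, the energy inequality \eqref{energy-ineq} forces
\[
\int_s^t(D(z_t),z_t)_\Om\,d\tau=0 \quad\text{and}\quad \int_s^t\|\cA^{1/2}\theta\|^2_{\Gamma_0}\,d\tau=0, \qquad 0\le s\le t.
\]
The vanishing of the thermal term gives $\theta\equiv0$, whence \eqref{heat-eq} yields $\cA v_t\equiv0$ and so $v_t\equiv0$; the vanishing of the damping integral, i.e. $(g(z_t),z_t)_\Om\equiv0$, combined with \eqref{lip-g2} gives $\|z_t\|_\Om^2\le\eps|\Om|$ for every $\eps>0$, hence $z_t\equiv0$. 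Thus $S_t y=y$ and $y\in\cN$, so $\cE$ is a strict Lyapunov function. Ingredient (ii) is already in place: by coercivity of $\cE$ the sets $\cW_R$ are bounded and forward invariant, $\cE$ is bounded above on bounded sets through \eqref{energy-bounds}, and $\cN$ is bounded by Lemma~\ref{lemma:b-stat}.

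The crux is ingredient (iii), which is the main obstacle precisely because the plate carries \emph{no} mechanical dissipation: the only damping in the whole system is the (critical, possibly degenerate) wave damping $g$ together with the thermal term $\cA^{1/2}\theta$. I would establish asymptotic smoothness via the Khanmamedov-type criterion (Proposition~\ref{prop:khan}) applied on a bounded forward-invariant $\cW_R$. For two trajectories $S_t y_1,S_t y_2$, the energy-reconstruction inequality (Proposition~\ref{pr:main}) bounds the energy of their difference at time $T$ by the integrated wave and thermal dissipation plus terms that are lower order, hence compact, relative to $Y$. The difference of the damping terms is controlled by monotonicity of $g$ and \eqref{lip-g2}; the \emph{critical} source $F_1$---bounded but not compact from $H^1(\Om)$ into $L_2(\Om)$ by \eqref{F}---and the Berger nonlinearity are handled by a compactness--uniqueness argument, absorbing them modulo compact seminorms of the trajectories, with the boundary coupling terms treated through the trace identity \eqref{traces-properties} and the regularity \eqref{an-property} of the Neumann map. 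Assembling these estimates produces the contractive decomposition required by Proposition~\ref{prop:khan}, so $(Y,S_t)$ is asymptotically smooth. The abstract gradient theorem then delivers the compact global attractor $\mfA=M^u(\cN)$ and the convergence \eqref{stb-to-n}.

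It remains to prove the parameter-uniform bound \eqref{first-bnd}. Here I would exploit $\mfA=M^u(\cN)$: any point of $\mfA$ lies on a full trajectory $U(t)$ with ${\rm dist}_Y(U(t),\cN)\to0$ as $t\to-\infty$. Since $t\mapsto\cE(U(t))$ is non-increasing and $\cE$ is continuous, $\cE(U(t))\le\lim_{s\to-\infty}\cE(U(s))\le\sup_{\cN}\cE$ for every $t$, so the energy on $\mfA$ is dominated by $\sup_\cN\cE$. On $\cN$ one has $z_t=v_t=\theta=0$, so the restriction of $\cE$ to stationary points involves neither $M_\gamma$ nor the coupling constant $\kappa$; together with the fact that $\cN$, and the bound $R_{**}$ of Lemma~\ref{lemma:b-stat}, are independent of $\gamma,\kappa$, this makes $\sup_\cN\cE$ independent of $\gamma,\kappa\in[0,1]$. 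Finally, the two-sided bound \eqref{energy-bounds}, whose constants depend only on $\al,\beta$ and the nonlinearities, converts the energy bound into a bound on $E$, which in turn controls $\|z\|_{1,\Om}^2+\|z_t\|_\Om^2+\|\Delta v\|_{\Gamma_0}^2+\|v_t\|_{\Gamma_0}^2+\gamma\|\nabla v_t\|_{\Gamma_0}^2+\|\theta\|_{\Gamma_0}^2$ from below with $\gamma,\kappa$-independent constants. This yields \eqref{first-bnd} with $\widetilde R$ independent of $\gamma,\kappa$, as claimed.
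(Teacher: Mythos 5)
Your proposal is correct and follows essentially the same route as the paper: the total energy $\cE$ as a strict Lyapunov function (via \eqref{energy-ineq} and \eqref{lip-g2}), asymptotic smoothness obtained from Proposition~\ref{pr:main} through the Khanmamedov criterion of Proposition~\ref{prop:khan}, the abstract gradient-system theorem to get $\mfA=M^u(\cN)$ and \eqref{stb-to-n}, and the $\gamma,\kappa$-independent bound \eqref{first-bnd} read off from $\sup_{\mfA}\cE=\sup_{\cN}\cE$ together with \eqref{energy-bounds} and the parameter-independence of $\cN$. The only cosmetic difference is that the paper verifies the sequential liminf condition \eqref{sequential-limit} for the critical terms by the weak-convergence argument of \cite{bcl-cpaa-06} rather than a ``compactness--uniqueness'' step, but the substance is the same.
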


\smallskip
Our second main result contains specific assertions regarding the
dimension and regularity of the attractor $\mathfrak{A}$.
These require the following additional assumptions.

\begin{assumption}
\label{hypo-2}
Besides to Assumption~\ref{hypo-1}, let the following conditions hold:
\begin{itemize}
  \item there exist positive constants $m$ and $M$ such that
\begin{equation}\label{g-for-dim0}
m\le \frac{ g(s_1)-g(s_2)}{s_1-s_2}\le M\left(
1+ s_1g(s_1)+ s_2g(s_2)\right)^\sigma,
\quad s_1,s_2\in\R,\;   s_1\neq s_2\,,
\end{equation}
where $0\le \sigma<1$ is  arbitrary in the case $n=2$ and $\sigma=2/3$
in the case $n=3$;
  \item $f\in C^2(\R)$ and $|f''(s)|\le C(1+|s|^{q-1})$ for $s\in\R$, where
  $q=2$ for $n=3$ and $1\le q<\infty$ for $n=2$.
\end{itemize}
\end{assumption}

\smallskip
As already observed in \cite[Remark~3.5]{bcl-cpaa-06}, we note that
if $g\in C^1(\R)$, then \eqref{g-for-dim0} is equivalent to the requirement
\begin{equation}\label{g-for-dim1}
m'\le g'(s)\le M'\left( 1+ sg(s)\right)^\sigma
~~\mbox{for all}~~s\in\R\,,
\end{equation}
for some constants $m'$, $M'>0$. Moreover, one can see that the inequality
on the right hand side of \eqref{g-for-dim1} holds true if we assume that
\begin{equation}\label{g-for-dim2}
m'<g'(s)\le M'\left( 1+ |s|^{p-1}\right)  ~~\mbox{and}~~
s g(s)\ge m'' |s|^{(p-1)/\sigma } -M'' ~~\mbox{for all}~~
s\in\R
\end{equation}
for some $m''>0$.
Moreover, the second requirement in \eqref{g-for-dim2}
follows from the first one if $1\le p\le 1+2\sigma$.

\smallskip

\begin{theorem}\label{t:main2}
Let Assumption~\ref{hypo-2} hold. Then the compact global attractor
$\mathfrak{A}$ given by Theorem~\ref{t:main}
possesses the following properties:
\begin{itemize}
    \item[{\bf 1.}] The attractor $\mfA$ has a finite fractal dimension,
and there exists a constant $d$ independent of $\gamma$ and $\kappa$, such that
$dim_f\mfA\le d$.
    \item[{\bf 2.}] The attractor $\mfA$ is a bounded set in the space
\[
Y_*=W^{2}_{6/p}(\Om)\times \cD(A^{1/2}) \times W_\gamma \times \cD(\cA)
\times  \cD(\cA) \
\]
in the case $n=3$ and $3<p\le 5$,
and in the space
\[
Y_{**}=H^2(\Om)\times \cD(A^{1/2})\times W_\gamma \times \cD(\cA)
\times  \cD(\cA)
\]
in the other cases, where $ W_\gamma$ is given by \eqref{w-gamma1}.
Moreover, for any trajectory
\[
U= (z(t),z_t(t),v(t),v_t(t),\theta(t)), \quad t\in\R,
\]
from the attractor $\mfA$ we have that
\begin{eqnarray} \label{regularity-estimate}
\|z(t)\|_{W^2_{p_*}(\Om)}^2+\|z_t(t)\|_1^2+\|z_{tt}(t)\|^2  +
\|v(t)\|_3^2+\|\Delta v_t(t)\|^2 & & \\ +\|v_{tt}(t)\|^2+ \gamma\|\nabla v_{tt}(t)\|^2
+\|\theta_t(t)\|^2 +\|\theta(t)\|_2^2 &\le & R_*^2\,, \nonumber
\end{eqnarray}
where  $p_*= \min\{2,6/p\}$ and $R_*$ does not depend on $\gamma,\kappa\in [0,1]$ (in the case $\gamma=0$
we have the additional estimate $\|v\|_4\le R_*$).
\end{itemize}
\end{theorem}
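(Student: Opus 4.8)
The plan is to derive both assertions from the stabilizability estimate of Proposition~\ref{p:stabiliz}. Once specialized to any pair of trajectories $U_1,U_2$ lying in the forward-invariant absorbing set $\cW_R$, that inequality takes the form of a \emph{quasi-stability} inequality: there are constants $C,\om>0$ (depending only on $R$, and crucially \emph{independent} of $\ga,\ka\in[0,1]$) and a compact seminorm $\mathbf{n}$ on $Y$ such that
\be
\|U_1(t)-U_2(t)\|_Y^2\le C\,e^{-\om t}\|U_1(0)-U_2(0)\|_Y^2
+C\sup_{\tau\in[0,t]}\big[\mathbf{n}(U_1(\tau)-U_2(\tau))\big]^2\,.
\ee
Both the finite-dimensionality and the smoothness of $\mfA$ are then consequences of this single inequality, via the abstract machinery of \cite{mem} (see also \cite{chueshov-book}); the point requiring care throughout is the uniformity of all constants with respect to $\ga$ and $\ka$.

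For Part~1 I would invoke the abstract criterion that converts the quasi-stability inequality above into a finite bound on $\dim_f\mfA$. The argument covers $\mfA$ by balls and exploits the fact that, after a finite time $t_*$ chosen so that $Ce^{-\om t_*}<\tfrac12$, the flow contracts the energy norm up to the compact seminorm $\mathbf{n}$. Since the contraction rate $Ce^{-\om t_*}$, the Lipschitz constant in \eqref{SF}, and the seminorm $\mathbf{n}$ all depend only on $R$---hence not on $\ga,\ka$---the resulting bound $d$ is itself independent of $\ga,\ka$.

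For Part~2 the key idea is to apply the stabilizability estimate to the time-shifted pair $U(t)$ and $U(t+h)$, both of which lie on the (invariant) attractor. Since trajectories on $\mfA$ are defined for all $t\in\R$, sending the initial time to $-\infty$ kills the exponential term, and dividing by $h$ and letting $h\to0$ yields a uniform bound for the time derivative $U_t$ in the energy space $Y$; that is, bounds for $z_t\in\cD(A^{1/2})$, $z_{tt}\in L_2(\Om)$, $v_t\in\cD(\cA)$, $M_\ga^{1/2}v_{tt}\in L_2(\Gamma_0)$ and $\theta_t\in L_2(\Gamma_0)$. I would then bootstrap spatial regularity from the equations \eqref{wave-eq}--\eqref{heat-eq} read as elliptic identities: the heat equation gives $\cA\theta=-\theta_t-\cA v_t\in L_2(\Gamma_0)$, so $\theta\in\cD(\cA)$; the plate equation then yields $\cA^2 v=-M_\ga v_{tt}-\beta\ka N_0^*Az_t+\cA\theta-F_2(v)\in V_\ga'$, so $v\in W_\ga$; and the wave equation gives $Az=-z_{tt}-D(z_t)-F_1(z)+\al\ka AN_0 v_t$, the last term being controlled by \eqref{an-property}, from which the stated spatial regularity of $z$ follows. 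Collecting these estimates produces \eqref{regularity-estimate} and the boundedness of $\mfA$ in $Y_*$ (resp. $Y_{**}$).

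The main obstacle is the bootstrap of the \emph{wave} component in the critical range $n=3$, $3<p\le5$. There $D(z_t)=g(z_t)$ and $F_1(z)$ cannot be controlled in $L_2(\Om)$ from energy-level bounds alone; by the growth exponent $p$ one only has $g(z_t)\in L_{6/p}(\Om)$, so that $Az\in L_{6/p}(\Om)$ and merely $z\in W^2_{6/p}(\Om)$ is recovered---this is exactly the origin of the exponent $p_*=\min\{2,6/p\}$. Making this rigorous requires the delicate nonlinear estimates of Lemma~\ref{le:f12}, which replace the naive $L_2$ bounds on the damping and the source. A secondary difficulty is to keep every constant uniform as $\ga\to0$, where the plate regularity space $W_\ga$ jumps from $\cD(\cA^{3/2})$ to $\cD(\cA^2)$; this is handled by performing the plate bootstrap in the $\ga$-independent dual formulation $\cA^2 v\in V_\ga'$ and only afterwards reading off the space $W_\ga$ from \eqref{w-gamma1}.
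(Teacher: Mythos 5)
Your proposal is correct and follows essentially the same route as the paper: Part~1 via the stabilizability (quasi-stability) estimate of Proposition~\ref{p:stabiliz} combined with the abstract dimension theorem of \cite{chu-las-1}, and Part~2 by applying that estimate to time-shifted trajectories on the attractor, letting the initial time go to $-\infty$ to kill the exponential term and obtain uniform time-derivative bounds, then bootstrapping spatial regularity through the heat, plate and wave equations by elliptic theory, with $g(z_t)\in L_{6/p}(\Om)$ forcing $p_*=\min\{2,6/p\}$ in the critical range. Two small inaccuracies that do not affect the plan: the critical-range wave bootstrap uses only the growth bound \eqref{lip-g1} and $L_{6/p}$ elliptic regularity (Lemma~\ref{le:f12} is consumed inside the proof of the stabilizability estimate, not in the bootstrap), and the coupling term should be read as the Neumann boundary datum $\partial z/\partial\nu=\al\ka v_t\in H^{s}(\Ga)$, $s<3/2$, as in the paper, rather than as $\al\ka AN_0v_t$, which \eqref{an-property} only places in a negative-order space.
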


\smallskip
In the case $\kappa=0$ Theorem~\ref{t:main} and Theorem~\ref{t:main2}
provide specific assertions for either uncoupled equation.
We record explicitly for the reader's convenience the one pertaining to the
wave component, namely Corollary~\ref{c:wave} below, obtained earlier in
\cite{bcl-cpaa-06}.
The reader is referred to
\cite{bcl-cpaa-06} for a comparison between the statements in
Corollary~\ref{c:wave} and the previous literature on the long-time
behaviour of semilinear wave equations with interior nonlinear dissipation.

\begin{corollary}[\cite{bcl-cpaa-06}]  \label{c:wave}
Let $f$ and $g$ satisfy the conditions in
Assump\-tion~\ref{hypo-1}.
Then the dynamical system $(Y_1,S^1_t)$ generated by problem
\be\label{wave-system}
\left\{
\ba{lll}
z_{tt}+  g(z_t)  -  \Delta z + f(z) = 0
& {\rm in}\; \Omega\times (0,T)
\\[2mm]
\displaystyle\frac{\partial z}{\partial\nu}  = 0
& {\rm on}\; \Gamma\times (0,T)
\ea
\right.
\ee
possesses a compact global attractor $\mathfrak{A}_1\equiv M^u(\cN_1)$,
where $\cN_1$ is the set of equilibria for \eqref{wave-system}.
If $f$ and $g$ satisfy  Assumption~\ref{hypo-2},
then (i) the attractor $\mfA_1$ has a finite fractal dimension;
 and (ii)   $\mfA_1$ is a bounded set in
          the space $W^2_{6/p}(\Om)\times \cD(A^{1/2})$
in the case $n=3$ and $3<p\le 5$, and in the space
$\cD(A)\times \cD(A^{1/2})$
in other cases.
\end{corollary}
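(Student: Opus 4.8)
The plan is to obtain Corollary~\ref{c:wave} as the $\kappa=0$ specialization of Theorems~\ref{t:main} and~\ref{t:main2}, exploiting the fact that every coupling term in \eqref{pde-system} carries the factor $\kappa$. First I would observe that, upon setting $\kappa=0$, the boundary condition on $\Gamma_0$ reduces to $\partial z/\partial\nu=0$, which together with the condition on $\Gamma_1$ yields the homogeneous Neumann problem \eqref{wave-system} on all of $\Gamma$; simultaneously the acoustic-pressure term $\beta\kappa z_t|_{\Gamma_0}$ disappears from the plate equation. At the abstract level this means that the Neumann map $N_0$ drops out of \eqref{wave-eq} and the trace term $N_0^*Az_t$ drops out of \eqref{plate-eq}, so the semi-flow factorizes as $S_t=S_t^1\times S_t^{\mathrm{pt}}$, where $S_t^1$ is the flow generated by \eqref{wave-system} on $Y_1$ and $S_t^{\mathrm{pt}}$ is the independent thermoelastic plate flow on $Y_2\times Y_3$.

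Next I would check that the hypotheses transfer verbatim: Assumption~\ref{hypo-1} (resp. Assumption~\ref{hypo-2}) is precisely a set of conditions on $f$ and $g$, which are the only data entering \eqref{wave-system}, so the standing hypotheses of Theorems~\ref{t:main} and~\ref{t:main2} are met by the uncoupled wave equation with no further work. I would then invoke Theorem~\ref{t:main} for the full system at $\kappa=0$, obtaining a compact global attractor $\mfA=M^u(\cN)$ for $(Y,S_t)$. Because the flow is a product and the stationary set factorizes accordingly as $\cN=\cN_1\times\cN^{\mathrm{pt}}$ (with $\cN_1$ the equilibria of \eqref{wave-system}, i.e. the weak solutions of the first stationary problem of Section~\ref{s:prelim}), both the attractor and its characterization decouple: $\mfA=\mfA_1\times\mfA^{\mathrm{pt}}$ with $\mfA_1=M^u(\cN_1)$. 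The projection of $\mfA$ onto $Y_1$ is then invariant and uniformly attracting for $S_t^1$, which identifies $\mfA_1$ as the compact global attractor of $(Y_1,S_t^1)$ and yields the structural statement $\mfA_1\equiv M^u(\cN_1)$.

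Finally, under Assumption~\ref{hypo-2}, Theorem~\ref{t:main2} provides a finite fractal dimension and the regularity inclusion $\mfA\subset Y_*$ (resp. $Y_{**}$) with bounds uniform in $\kappa\in[0,1]$; reading off the wave slots of $Y_*$ and $Y_{**}$ returns exactly $W^2_{6/p}(\Om)\times\cD(A^{1/2})$ when $n=3$ and $3<p\le5$, and $\cD(A)\times\cD(A^{1/2})$ in the other cases. The improvement from $H^2(\Om)$ to $\cD(A)$ is automatic, since every $z$ lying on the attractor satisfies the homogeneous Neumann condition and hence belongs to $\cD(A)=\{h\in H^2(\Om):\partial h/\partial\nu|_\Gamma=0\}$. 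Since the entire argument is a specialization, I expect no genuine obstacle; the only point requiring a word of care is the routine verification that, for a product semi-flow, the global attractor equals the product of the component attractors and the unstable manifold factorizes accordingly—so that the wave component of $\mfA$ really coincides with the attractor of $S_t^1$, rather than merely containing it.
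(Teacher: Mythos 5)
Your proposal is correct and follows essentially the same route as the paper, which obtains Corollary~\ref{c:wave} precisely by setting $\kappa=0$ so that the wave equation decouples and Theorems~\ref{t:main} and~\ref{t:main2} specialize to the wave component. The extra details you supply---the factorization of the product attractor and unstable manifold, and the upgrade from $H^2(\Om)$ to $\cD(A)$ via the homogeneous Neumann condition once the coupling trace $\alpha\kappa v_t$ vanishes---are exactly the routine verifications the paper leaves implicit.
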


\smallskip
As for the thermoelastic Berger plate model, Theorems~\ref{t:main}
and~\ref{t:main2} yield the following result, which is---to the authors'
best knowledge---completely new (a similar result for thermoelastic von Karman
equations was established recently in \cite{chu-las-preprint06}).

\begin{corollary}\label{c:plate}
Suppose that $p_0\in L_2(\Gamma_0)$.
Then  for every $\gamma\ge 0$ the dynamical system $(Y_2\times L_2(\Gamma_0),S^2_t)$ generated by
problem
\be\label{plate-system}
\left\{
\ba{lll}
\displaystyle
v_{tt}-\gamma\Delta v_{tt} + \Delta^2 v
+ \Big[Q -\int_{\Gamma_0}|\nabla v(x,t)|^2 dx\Big] \,\Delta v+\Delta\theta  = p_0
& & {\rm on}\; \Gamma_0\times(0,T) \\
\displaystyle
\theta_{t} - \Delta\theta- \,\Delta v_t = 0
& & {\rm on}\; \Gamma_0\times(0,T)
\\[2mm]
v=\Delta v = 0,\; \theta =0 & & \!\!\!{\rm on}\; \partial\Gamma_0\times(0,T)
\ea
\right.
\ee
has a compact global attractor $\mathfrak{A}_2\equiv M^u(\cN_2)$,
where $\cN_2$ is the set of equilibria for \eqref{plate-system}.
In addition, the attractor $\mfA_2$ is a bounded
set in $W_\gamma\times \cD(\cA) \times \cD(\cA)$ and
has a finite fractal dimension.
\end{corollary}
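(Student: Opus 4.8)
The plan is to deduce Corollary~\ref{c:plate} from Theorems~\ref{t:main} and~\ref{t:main2} by specializing to the decoupled regime $\kappa=0$. The decisive observation is that the plate-thermal system \eqref{plate-system} is exactly the plate-thermal block of the abstract system \eqref{abstract-system} once the coupling is switched off: putting $\kappa=0$ kills both interaction terms $\alpha\kappa N_0 v_t$ and $\beta\kappa N_0^{*}A z_t$, so that \eqref{abstract-system} splits into the autonomous wave equation \eqref{wave-system} and the autonomous plate-thermal system \eqref{plate-system}. Since the latter involves neither $g$ nor $f$, I am free to fix \emph{any} admissible pair $(g,f)$ meeting Assumption~\ref{hypo-2}---for instance the linear choices $g(s)=s$, $f(s)=2\mu s$---apply the full theory, and then read off the plate-thermal conclusions, which are by construction independent of this choice and depend only on $p_0\in L_2(\Gamma_0)$.

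With such $(g,f)$ fixed and $\kappa=0$, the semiflow $S_t$ on $Y=Y_1\times(Y_2\times Y_3)$ factorizes as a Cartesian product $S_t=S^1_t\times S^2_t$, where $S^1_t$ is the wave semiflow of Corollary~\ref{c:wave} and $S^2_t$ is the plate-thermal semiflow of \eqref{plate-system}. First I would invoke Theorem~\ref{t:main} to obtain the compact global attractor $\mfA=M^u(\cN)$ of the full system. For a product of gradient systems the equilibrium set factorizes, $\cN=\cN_1\times\cN_2$ (evident from the stationary equations, which decouple at $\kappa=0$), and so does the attractor, $\mfA=\mfA_1\times\mfA_2$, with $\mfA_2=M^u(\cN_2)$ the compact global attractor of $(Y_2\times L_2(\Gamma_0),S^2_t)$; this delivers both the existence statement and the characterization $\mfA_2\equiv M^u(\cN_2)$. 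Next, Theorem~\ref{t:main2} gives $\dim_f\mfA<\infty$, and since the canonical projection $\pi\colon Y\to Y_2\times Y_3$ is Lipschitz (hence non-increasing on fractal dimension), $\mfA_2=\pi(\mfA)$ inherits finite fractal dimension. Finally, restricting the regularity bound \eqref{regularity-estimate} to its plate-thermal components $\|v\|_3^2+\|\Delta v_t\|^2+\|\theta\|_2^2$ (together with $\|v\|_4$ when $\gamma=0$) yields precisely the asserted boundedness of $\mfA_2$ in $W_\gamma\times\cD(\cA)\times\cD(\cA)$.

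I expect the only delicate point to be the product-attractor principle, namely $\mfA=\mfA_1\times\mfA_2$ together with the factorization $M^u(\cN)=M^u(\cN_1)\times M^u(\cN_2)$. This rests on the fact that a full, bounded trajectory of a product semiflow is precisely a pair of full bounded trajectories of the two factors, combined with the gradient structure: the Lyapunov functional \eqref{total-energy} splits additively into a wave part and a plate-thermal part, so each factor is itself gradient with a bounded equilibrium set (Lemma~\ref{lemma:b-stat}, read on $\cN_2$). Should one prefer to sidestep the product formalism altogether, there is a fully self-contained alternative: repeat the proofs of Theorems~\ref{t:main} and~\ref{t:main2} directly for \eqref{plate-system}. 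The asymptotic smoothness criterion (Proposition~\ref{prop:khan}) and the stabilizability estimate (Proposition~\ref{p:stabiliz}) were both established uniformly over $\kappa\in[0,1]$, so their $\kappa=0$ specialization transfers verbatim to the plate-thermal subsystem---indeed more easily, since the coupling terms and the wave dissipation are now absent---which is what secures the uniformity of the radius in $W_\gamma\times\cD(\cA)\times\cD(\cA)$ across $\gamma\in[0,1]$.
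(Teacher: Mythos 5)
Your proposal is correct and takes essentially the same route the paper intends: Corollary~\ref{c:plate} is read off from Theorems~\ref{t:main} and~\ref{t:main2} by setting $\kappa=0$, under which the system decouples and the plate--thermal block is exactly \eqref{plate-system}. The details you supply (an admissible auxiliary choice of $g,f$, the factorizations $\cN=\cN_1\times\cN_2$ and $\mfA=\mfA_1\times\mfA_2$, and the Lipschitz projection argument for the fractal dimension) are precisely the steps the paper leaves implicit.
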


The following Theorem provides us with more detailed information regarding
the dependence of the attractor on the parameters $\gamma$ and $\kappa$.

\begin{theorem}\label{co:up-sc}
Let Assumption~\ref{hypo-2} be in force and let $p<5$.
Denote by $S_t^{\gamma,\kappa}$ the evolution operator of
problem \eqref{abstract-system} in the space
\[
Y_\gamma := Y= \cD(A^{1/2}) \times L^2(\Omega)
\times \cD(\cA ) \times V_{\gamma}\times L_2(\Gamma_0)\,.
\]
Let $\mfA^{\gamma,\kappa}$ be a global attractor for the system
$(Y_\gamma, S_t^{\gamma,\kappa})$. Then
the family of the attractors $\mfA^{\gamma,\kappa}$
is upper semi-continuous on $\La:=[0,1]\times [0,1]$: namely,
for any $\la_0=(\gamma_0,\kappa_0)\in\La$ we have that
\begin{equation}\label{semi-cont-at1}
\lim_{(\gamma,\kappa)\to \la_0}
\sup\left\{ {\rm dist}_{Y_{\ga_0}}\left(U, \mfA^{\la_0} \right)\, :
U\in \mfA^{\gamma,\kappa}\right\}=0\,.
\end{equation}
In particular,
\begin{itemize}
  \item
if $\kappa_0=0$ one has
\begin{equation}\label{semi-cont-at2}
\sup\left\{ {\rm dist}_{Y_{\ga_0}} \left(U,\mfA_1\times \mfA^{\gamma_0}_2 \right)
:\,  U\in \mfA^{\gamma,\kappa}\right\}=0 \quad  as \quad
\gamma\to \gamma_0,\; \kappa\to 0\,,
\end{equation}
where $\mfA_1$ is the same as in Corollary~\ref{c:wave}
and
$\mfA^{\gamma}_2$ is the attractor generated by problem \eqref{plate-system};
  \item
if $\gamma_0=\kappa_0=0$ and $\kappa\equiv 0$, then
\begin{equation}\label{semi-cont-at2-0}
\sup\left\{ {\rm dist}_{\cH}\left(V,  \mfA^{0}_2 \right)\, :
V\in \mfA_2^{\gamma}\right\}=0 \quad  as \quad
\gamma\to 0\,,
\end{equation}
where $\cH=\cD(\cA)\times L_2(\Gamma_0)\times L_2(\Gamma_0)$.
\end{itemize}
\end{theorem}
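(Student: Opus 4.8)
The plan is to prove \eqref{semi-cont-at1} by a compactness-and-contradiction argument, exploiting in an essential way the fact that the bounds in Theorems~\ref{t:main} and~\ref{t:main2} are \emph{uniform} in $(\gamma,\kappa)\in\La$. The decisive structural input is the higher-order regularity estimate \eqref{regularity-estimate}: every full trajectory lying on any attractor $\mfA^{\gamma,\kappa}$ satisfies, with $R_*$ independent of $\gamma,\kappa$, the simultaneous bounds on $\|z\|_{W^2_{p_*}(\Om)}$, $\|z_t\|_1$, $\|z_{tt}\|$, $\|v\|_3$, $\|\cA v_t\|_{\Gamma_0}$, $\|v_{tt}\|$, $\gamma\|\g v_{tt}\|^2$, $\|\theta_t\|$ and $\|\theta\|_2$. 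Since the regular space $Y_{**}$ (respectively $Y_*$) embeds compactly into the limiting space $Y_{\ga_0}$ for every $\ga_0\in[0,1]$, this shows at once that $\bigcup_{\gamma,\kappa}\mfA^{\gamma,\kappa}$ is precompact in $Y_{\ga_0}$ and that full trajectories on the attractors are equibounded and equicontinuous in $Y_{\ga_0}$ on every finite time interval.

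First I would argue by contradiction: if \eqref{semi-cont-at1} failed at some $\la_0=(\ga_0,\ka_0)$, there would exist $\delta>0$, a sequence $\la_n=(\gamma_n,\kappa_n)\to\la_0$, and points $U_n\in\mfA^{\gamma_n,\kappa_n}$ with $\mathrm{dist}_{Y_{\ga_0}}(U_n,\mfA^{\la_0})\ge\delta$. By invariance of the attractor, each $U_n$ is the value at $t=0$ of a full trajectory $U_n(\cdot)\subset\mfA^{\gamma_n,\kappa_n}$, and by the uniform estimate \eqref{regularity-estimate} all these trajectories obey the same regularity bounds on all of $\R$. A diagonal extraction over the intervals $[-T,T]$, combined with Arzel\`a--Ascoli and an Aubin--Lions type compactness argument for the velocities and accelerations, then yields a subsequence such that $U_n(\cdot)\to U(\cdot)$ in $C([-T,T];Y_{\ga_0})$ for every $T>0$.

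The core step is to identify the limit $U(\cdot)$ as a full trajectory of the limit dynamics $S_t^{\la_0}$. I would pass to the limit in the variational formulation \eqref{var-equations} written with parameters $(\gamma_n,\kappa_n)$. The coupling terms depend affinely on $\kappa$, so $\kappa_n\to\ka_0$ causes no difficulty; the nonlinearities pass to the limit by the local Lipschitz bound \eqref{F} together with the strong convergence just obtained. The delicate point is the inertial term $(M_{\gamma_n}v^n_t,\psi)_{\Gamma_0}=(v^n_t,\psi)_{\Gamma_0}+\gamma_n(\g v^n_t,\g\psi)_{\Gamma_0}$ in the singular regime $\ga_0=0$: here the uniform $H^2$-bound $\|\cA v^n_t\|_{\Gamma_0}\le R_*$ from \eqref{regularity-estimate} forces $\gamma_n(\g v^n_t,\g\psi)_{\Gamma_0}\to0$, so that the limit satisfies the $\gamma=0$ plate equation. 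Thus $U(\cdot)$ is a bounded complete trajectory of $S_t^{\la_0}$; since the attractor coincides with the unstable manifold $M^u(\cN)$ and hence contains every bounded full trajectory, $U(0)\in\mfA^{\la_0}$. But $U_n=U_n(0)\to U(0)$ in $Y_{\ga_0}$, contradicting $\mathrm{dist}_{Y_{\ga_0}}(U_n,\mfA^{\la_0})\ge\delta$.

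Finally, the special cases follow from the product structure of the limit dynamics at $\ka_0=0$. When $\kappa=0$ the wave block \eqref{wave-system} and the thermoelastic plate block \eqref{plate-system} decouple, whence $\mfA^{(\ga_0,0)}\subseteq\mfA_1\times\mfA_2^{\ga_0}$; combined with \eqref{semi-cont-at1} this gives \eqref{semi-cont-at2}, and restricting to the plate--thermal component (measured in $\cH$) yields \eqref{semi-cont-at2-0}. The main obstacle throughout is precisely the singular limit $\gamma\to0$, where the phase space $Y_\gamma$ itself degenerates ($V_\gamma$ collapsing from $H^1_0(\Gamma_0)$ to $L_2(\Gamma_0)$) and the plate equation loses the inertial term and changes character; it is overcome entirely by the $\gamma$-independence of the regularity bounds \eqref{regularity-estimate}, which is the genuine payoff of the uniform stabilizability estimate of Proposition~\ref{p:stabiliz}.
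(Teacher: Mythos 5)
Your proposal is correct and follows essentially the same route as the paper: a contradiction argument on full trajectories through the points $U_n$, compactness in $C([-T,T];Y_{\gamma_0})$ obtained from the $(\gamma,\kappa)$-uniform regularity estimate \eqref{regularity-estimate} together with Aubin's theorem (using the compact embedding $W^2_{\min\{2,6/p\}}(\Om)\subset H^1(\Om)$ for $p<5$), and the observation that the bounded limiting full trajectory must lie in $\mfA^{\la_0}$. The extra detail you supply on passing to the limit in the variational formulation, in particular the treatment of $\gamma_n(\nabla v^n_t,\nabla\psi)_{\Gamma_0}$ in the singular regime $\gamma_0=0$, fills in a step the paper leaves as ``standard.''
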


We shall give a proof of this Theorem at the end of Section~\ref{s:proof-t2}.

\begin{remark}\label{re:up-sc1}
\begin{rm}
{\bf 1.} Assertion \eqref{semi-cont-at2} means that in the
`decoupling limit' $\kappa\to 0$ the global attractor $\mfA$ becomes
close to the cartesian product of global attractors pertaining to the
problems \eqref{wave-system} and \eqref{plate-system}, respectively.
Condition \eqref{semi-cont-at2-0} states the semi-continuity of the attractor
corresponding to the uncoupled system \eqref{plate-system},
as the parameter $\gamma$ (rotational inertia) tends to zero.
This result is---to the authors's best knowledge---completely new for the
thermoelastic Berger plate model.

{\bf 2.} It should be noted that in the critical case $p=5$ the semi-continuity
property \eqref{semi-cont-at1} may be established as well, though in the
({\em weaker}) topology of the space
\begin{equation}\label{Y-g-eps}
Y_\gamma^{-\eps} := Y= \cD(A^{1/2-\eps}) \times L^2(\Omega)
\times \cD(\cA ) \times V_{\gamma}\times L_2(\Gamma_0)\,,
\end{equation}
with arbitrary $\eps>0$; see Remark~\ref{re:up-sc2}.
Whether we can take $\eps=0$ in the case $p=5$ is still an open question.
\end{rm}
\end{remark}

\section{Main inequality}\label{s:main-ineq}
This section is entirely devoted to show a preliminary inequality,
which constitutes a fundamental common step for the proofs of both
Theorems~\ref{t:main} and~\ref{t:main2}.
This inequality, namely \eqref{ineq:main} in Proposition \ref{pr:main}
below, holds true under the basic Assumption~\ref{hypo-0}.
Yet, as we shall see in the next sections, by progressively strengthening
the assumptions on the nonlinearities it will yield the key estimates
specifically used for the proof of Theorems~\ref{t:main} and~\ref{t:main2}.

\begin{proposition}\label{pr:main}
Let Assumption~\ref{hypo-0} hold.
Assume that $y_1, y_2\in \cW_R$ for some $R>R_*$,
where $\cW_R$ is defined by (\ref{wr-set}) and denote
$$
(h(t),h_t(t),u(t),u_t(t),\psi(t)):=S_t y_1\,,\quad
(\zeta(t),\zeta_t(t),w(t),w_t(t),\xi(t)):=S_t y_2.
$$
Let
\begin{equation}\label{zvth-dif}
z(t):=h(t)-\zeta(t),\quad  v(t):=u(t)-w(t),\quad \theta(t):=\psi(t)-\xi(t)\,.
\end{equation}
There exist $T_0>0$ and positive constants $c_0$, $c_1$ and $c_2(R)$
independent of $T$ and $\gamma, \kappa\in [0,1]$ such that for every
$T\ge T_0$ the following inequality holds:
\begin{eqnarray}\label{ineq:main}
TE^0(T) +
\int_{0}^{T} E^0(t) dt
\le c_0 \Big[ \int_0^T\left( \|z_t\|^2+ \alpha\|\cA^{1/2}\theta\|^2\right)dt
+\beta G_0^T(z) \Big]
\nonumber \\
+  c_1
\left[ H_0^T(z)+ \Psi_T(z,v) \right]+
c_2(R) \int_0^T\left( \|z\|^2+ \|v\|^2\right)dt\,,
  \end{eqnarray}
where $E^0(t)=\beta E^0_z(t)+\alpha E^0_v(t)+(\alpha/2) \|\theta(t)\|^2$ with
$E^0_z(t)$ and $E^0_v(t)$ given by \eqref{zero-energies}.
We also introduced the notations
\be\label{integrals-st}
G_s^t(z)=\!\int_s^t\!\!\big(D(\zeta_t+z_t)-D(\zeta_t),z_t\big)_{\Omega} d\tau\,,
\ee
\be\label{integrals-st-z}
H_s^t(z)=\!\int_s^t\!\!\big|\big(D(\zeta_t+z_t)
-D(\zeta_t),z\big)_{\Omega}\big| d\tau\,,
\ee
and
\bey
\Psi_T(z,v) & = &
\beta\Big|\int_0^T (\cF_1(z),z_t)dt\Big|
+ \beta\Big|\int_0^T\!\!\int_t^T (\cF_1(z),z_t)\,d\tau \,dt\Big|
\nonumber\\[1mm]
& & \qquad + \alpha\Big|\int_0^T (\cF_2(v),v_t)dt\Big|
+ \alpha\Big|\int_0^T\!\!\int_t^T (\cF_2(v),v_t)\,d\tau \,dt\Big|\,,
\label{psi-def}
\eey
with
\be\label{cal-f}
\cF_1(z)= F_1(\zeta+z)-F_1(\zeta)
\quad\mbox{and}\quad
\cF_2(z)= F_2(\zeta+z)-F_2(\zeta)\,,
\ee
where $F_1$ and $F_2$ are the same as in \eqref{abstract-system}.
\end{proposition}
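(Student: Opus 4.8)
The plan is to run an energy-multiplier analysis directly on the system for the difference variables. Subtracting the two copies of \eqref{abstract-system}, using $h_t=\zeta_t+z_t$ and $u=w+v$, the triple $(z,v,\theta)$ from \eqref{zvth-dif} solves
\[
\begin{aligned}
& z_{tt}+A\big(z-\al\ka N_0 v_t\big)+\big(D(\zeta_t+z_t)-D(\zeta_t)\big)+\cF_1(z)=0,\\
& \Mg v_{tt}+\cA^2 v+\beta\ka N_0^*A z_t-\cA\theta+\cF_2(v)=0,\\
& \theta_t+\cA\theta+\cA v_t=0,
\end{aligned}
\]
with $\cF_1,\cF_2$ as in \eqref{cal-f}. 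First I would test these three equations by $\beta z_t$, $\al v_t$ and $\al\theta$, respectively, and add. Invoking $N_0^*A(\cdot)=(\cdot)|_{\Gamma_0}$ from \eqref{traces-properties}, the two wave--plate boundary couplings $\mp\al\beta\ka(v_t,z_t|_{\Gamma_0})_{\Gamma_0}$ cancel, and the two thermo-mechanical couplings $\mp\al(\cA\theta,v_t)_{\Gamma_0}$ cancel as well, producing the clean identity $\frac{d}{dt}E^0(t)+\beta(D(\zeta_t+z_t)-D(\zeta_t),z_t)_\Om+\al\|\cA^{1/2}\theta\|^2=-\beta(\cF_1(z),z_t)_\Om-\al(\cF_2(v),v_t)_{\Gamma_0}$. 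Integrating this over $[t,T]$, dropping the (nonnegative) damping and thermal dissipation, and integrating the resulting inequality once more over $t\in[0,T]$ gives $TE^0(T)\le\int_0^TE^0(t)\,dt+c_1\Psi_T(z,v)$, the remainder being exactly the double time-integrals in \eqref{psi-def}. This disposes of the $TE^0(T)$ term in \eqref{ineq:main}, reducing everything to an observability bound for $\int_0^TE^0(t)\,dt$.

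Since $E^0=\beta E^0_z+\al E^0_v+(\al/2)\|\theta\|^2$ and $\int_0^T\|\theta\|^2\le C\int_0^T\|\cA^{1/2}\theta\|^2$ by Poincar\'e ($\theta\in H^1_0(\Gamma_0)$), what remains is to estimate $\int_0^T\|A^{1/2}z\|^2$, $\int_0^T\|\cA v\|^2$ and $\int_0^T\|\Mg^{1/2}v_t\|^2$. For the two potential energies I would use the equipartition multipliers: testing the wave equation by $\beta z$ and the plate equation by $\al v$, and integrating by parts in time, bounds $\int_0^T\|A^{1/2}z\|^2$ by $\int_0^T\|z_t\|^2$ plus endpoint terms, the damping cross-term $H_0^T(z)$, the coupling $\al\ka\int_0^T(v_t,z|_{\Gamma_0})_{\Gamma_0}$ (here only the energy-level trace of $z$ occurs), and the lower-order term $\int_0^T(\cF_1(z),z)_\Om$; symmetrically, $\int_0^T\|\cA v\|^2$ is bounded by $\int_0^T\|\Mg^{1/2}v_t\|^2$ plus endpoint terms, the thermal coupling $\int_0^T(\cA^{1/2}\theta,\cA^{1/2}v)$, and $\int_0^T(\cF_2(v),v)$. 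The delicate wave--plate coupling $\beta\ka\int_0^T(N_0^*Az_t,v)_{\Gamma_0}$, in which the trace of $z_t$ is \emph{not} defined at the energy level, is handled by integrating by parts in time: this trades it for the endpoint pairing $(z|_{\Gamma_0},v)$ and $\beta\ka\int_0^T(z|_{\Gamma_0},v_t)_{\Gamma_0}$, both now admissible by \eqref{traces-properties}. The $\cF_1,\cF_2$ pairings become lower-order contributions by the Lipschitz bound \eqref{F}, and after a Young's inequality all the trace and coupling remainders are absorbed into $\eps\int_0^T(\|A^{1/2}z\|^2+\|\cA v\|^2)$ plus $c_2(R)\int_0^T(\|z\|^2+\|v\|^2)$. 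The pointwise endpoint terms are $O(E^0(0)+E^0(T))$; rewriting $E^0(0)$ through the energy identity as $E^0(T)+\beta G_0^T(z)+\al\int_0^T\|\cA^{1/2}\theta\|^2+(\text{nonlinear})$ is precisely what produces the $\beta G_0^T(z)$ contribution on the right of \eqref{ineq:main}, while the surviving $E^0(T)$ is later dominated by $TE^0(T)$ once $T_0$ is large.

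The genuinely hard step is the recovery of the plate velocity: because the plate carries \emph{no} mechanical damping, $\int_0^T\|\Mg^{1/2}v_t\|^2$ must be extracted from the thermal dissipation alone. Here I would exploit the heat equation in the form $\cA v_t=-\theta_t-\cA\theta$ and pair it against $\cA^{-1}\Mg v_t$ (equivalently, test the plate dynamics with a multiplier built from $\theta$), integrating the $\theta_t$-term by parts in time and substituting $\Mg v_{tt}$ from the plate equation. This rewrites $\int_0^T\|\Mg^{1/2}v_t\|^2$ as $\int_0^T\|\cA^{1/2}\theta\|^2$ plus endpoint terms, a coupling $\beta\ka\int_0^T(z_t,A N_0\cA^{-1}\theta)_\Om$ that is admissible thanks to the smoothing by $\cA^{-1}$ and the sharp Neumann regularity \eqref{an-property}, an absorbable fraction $\eps\int_0^T\|\cA v\|^2$ coming from $(\cA^{1/2}\theta,\cA^{1/2}v)$, and $\cF_2$-terms. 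The main obstacle, and the point requiring the most care, is to carry this velocity recovery uniformly in $\ga\in[0,1]$: since $\Mg=I+\ga\cA$ degenerates and $V_\gamma$ changes as $\ga\to0$, the decomposition $\cA^{-1}\Mg v_t=\cA^{-1}v_t+\ga v_t$ and every subsequent pairing must be organized in the $V_\gamma$--$V'_\gamma$ duality so that no constant blows up in the singular limit, and likewise the coupling estimates must be uniform in $\ka\in[0,1]$.

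Finally I would add the three equipartition/recovery bounds together, choose the absorption parameter $\eps$ small so that the $\eps\int_0^T(\|A^{1/2}z\|^2+\|\cA v\|^2)$ terms are swallowed by the left-hand side, and then fix $T_0$ large enough that the surviving pointwise $E^0(T)$ endpoint terms are dominated by $TE^0(T)$. Collecting the remaining quantities into the three groups weighted by $c_0$ (the dissipation $\int_0^T\|z_t\|^2$, $\al\int_0^T\|\cA^{1/2}\theta\|^2$, $\beta G_0^T(z)$), by $c_1$ (the cross-term $H_0^T(z)$ and the nonlinear work $\Psi_T(z,v)$), and by $c_2(R)$ (the compact lower-order term $\int_0^T(\|z\|^2+\|v\|^2)$) then yields \eqref{ineq:main}, with all constants manifestly independent of $T$ and of $\ga,\ka\in[0,1]$.
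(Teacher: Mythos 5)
Your proposal is correct and follows essentially the same route as the paper: the same difference system and energy identity (Lemma~\ref{le:ener-z}), the equipartition multiplier $z$ for the wave part, and the Avalos--Lasiecka thermal multiplier (the paper applies $\eps v+\cA^{-1}\theta$ to the plate equation, which is algebraically the same as your pairing of the heat equation with $\cA^{-1}M_\gamma v_t$), followed by the same use of the energy identity to convert $E^0(0)$ into $\beta G_0^T(z)$ and the choice of $T_0$ large to dominate the endpoint terms. The one imprecision is your claim that the coupling $\beta\kappa\int_0^T(N_0^*Az_t,\cA^{-1}\theta)_{\Gamma_0}\,dt$ is directly admissible by the smoothing of $\cA^{-1}$ together with \eqref{an-property}: since $N_0\phi\notin\cD(A)$ for any nonzero $\phi$, this term, exactly like the coupling against $v$, requires the trace of $z_t$ and must instead be integrated by parts in time and treated via $\cA^{-1}\theta_t=-\theta-v_t$, as the paper does in \eqref{ineq:inner-product}.
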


\proof
{\bf Step 1 (Energy identity).}
Without loss of generality, we can assume that $(h(t),u(t),\psi(t))$ and
$(\zeta(t),w(t),\xi(t))$ are strong solutions.
By the invariance of $\cW_R$ and in view of relation \eqref{energy-bounds}
there exists a constant $C_{R}>0$ such that
\be\label{energ-bnd}
E^0_h(h(t),h_t(t))+ E^0_{\zeta}(\zeta(t),\zeta_t(t))
+ E^0_u(u(t),u_t(t))+ E^0_w(w(t),w_t(t))
\le C_{R}
\ee
for all $t\ge 0$,  where $E_z^0(t)$ and $E_v^0(t)$  denote the corresponding
(free) energies as in \eqref{zero-energies}. We also have that
\be\label{energ-bnd2}
\|\psi(t)\|^2+ \|\xi(t)\|^2\le C_{R}\,,\qquad t\ge 0\,.
\ee
We establish first of an energy type equality regarding
\begin{equation}\label{e0-full}
E^0(t)=\beta E^0_z(t)+\alpha E^0_v(t)+\frac{\alpha}2 \|\theta(t)\|^2.
\end{equation}
\begin{lemma}\label{le:ener-z}
For any $T>0$ and all $t$, $0\le t\le T$, $E^0(t)$ satisfies
\begin{align}\label{key-equality}
E^0(T)+ &\beta G_t^T(z)+  \alpha  \int_t^T\|\cA^{1/2}\theta\|^2d\tau
\\
&= E^0(t)- \beta\int_t^T (\cF_1(z),z_t)\,d\tau
- \alpha \int_t^T (\cF_2(v),v_t)\,d\tau\,,
\nonumber
\end{align}
where $G_t^T(z)$ is  given by \eqref{integrals-st},
while $\cF_1(z)$ and $\cF_2(z)$ are defined by \eqref{cal-f}.
\end{lemma}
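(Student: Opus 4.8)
The plan is to read \eqref{key-equality} as the weighted energy identity for the difference variables $(z,v,\theta)$, obtained by testing the system they solve against their own velocities. First I would subtract the two copies of the abstract system \eqref{abstract-system} satisfied by $(h,u,\psi)$ and $(\zeta,w,\xi)$. Since $h_t=\zeta_t+z_t$, and the nonlinear increments are precisely those recorded in \eqref{cal-f}, the triple $(z,v,\theta)$ solves
\[
\begin{aligned}
z_{tt}+A(z-\al\ka N_0 v_t)+\big[D(\zeta_t+z_t)-D(\zeta_t)\big]+\cF_1(z)&=0,\\
\Mg v_{tt}+\cA^2 v+\beta\ka N_0^* A z_t-\cA\theta+\cF_2(v)&=0,\\
\theta_t+\cA\theta+\cA v_t&=0,
\end{aligned}
\]
with $\cF_2(v)=F_2(u)-F_2(w)$. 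As the statement permits, I assume both trajectories are strong, so that $z_t\in\cD(A^{1/2})$, $v_t\in\cD(\cA^{1/2})$ and $\theta\in\cD(\cA)$ for a.e.\ $t$; then all the pairings below are legitimate and $E^0_z$, $E^0_v$, $\|\theta\|^2$ are absolutely continuous in $t$.

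Next I would form the $L_2$ inner products of the three equations with $z_t$, $v_t$, $\theta$, weight them by $\beta$, $\alpha$, $\alpha$ (matching the norm \eqref{norm}), and sum. The conservative terms reproduce the time derivatives of the free energies, namely $\beta(z_{tt},z_t)_\Om+\beta(A^{1/2}z,A^{1/2}z_t)_\Om=\beta\,\tfrac{d}{dt}E^0_z$ and $\alpha(\Mg v_{tt},v_t)_{\Ga_0}+\alpha(\cA v,\cA v_t)_{\Ga_0}=\alpha\,\tfrac{d}{dt}E^0_v$, while the heat equation produces $\tfrac{\alpha}{2}\tfrac{d}{dt}\|\theta\|^2+\alpha\|\cA^{1/2}\theta\|^2$; the damping and the nonlinearities leave precisely the integrand of $G_t^T(z)$ (see \eqref{integrals-st}) together with the terms $\beta(\cF_1(z),z_t)_\Om+\alpha(\cF_2(v),v_t)_{\Ga_0}$. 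After integrating over $[t,T]$ and recognizing \eqref{integrals-st}, this yields exactly \eqref{key-equality}.

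The delicate point---and the step I expect to require the most care---is that all three coupling terms must cancel identically, for otherwise spurious boundary and thermal contributions would survive. For the wave--plate coupling I would avoid expanding $AN_0 v_t$ pointwise and instead invoke the trace identity \eqref{traces-properties}, $N_0^*Az_t=z_t|_{\Ga_0}$ (valid since $z_t\in H^1(\Om)$), together with $(A^{1/2}N_0 v_t,A^{1/2}z_t)_\Om=(v_t,N_0^*Az_t)_{\Ga_0}$, which relies on the regularity \eqref{an-property}. The two contributions then become $-\beta\al\ka(z_t|_{\Ga_0},v_t)_{\Ga_0}$ from the wave balance and $+\alpha\beta\ka(z_t|_{\Ga_0},v_t)_{\Ga_0}$ from the plate balance, and hence cancel. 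For the plate--heat coupling, the terms $-\alpha(\cA\theta,v_t)_{\Ga_0}$ and $+\alpha(\cA v_t,\theta)_{\Ga_0}$ cancel by self-adjointness of $\cA$, leaving only the dissipation $\alpha\|\cA^{1/2}\theta\|^2$ on the left-hand side.

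Finally, to keep everything rigorous for strong solutions I would carry out the computation in the variational form of Remark~\ref{re:weak-so} rather than differentiating the energies formally, which legitimizes the integrations by parts and the use of \eqref{traces-properties}. If \eqref{key-equality} is later needed for generalized solutions, it then follows by passing to the limit along the approximating strong solutions of part (G) of Definition~\ref{str-sol-2ord}.
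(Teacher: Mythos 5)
Your proposal is correct and follows essentially the same route as the paper: derive the difference system, test with $z_t$, $v_t$, $\theta$, weight by $\beta$, $\alpha$, $\alpha$, and observe that the wave--plate coupling terms cancel by the duality $(AN_0 v_t,z_t)_\Om=(v_t,N_0^*Az_t)_{\Ga_0}$ and the plate--heat terms cancel by self-adjointness of $\cA$. The paper merely presents the three component energy identities separately before summing, and likewise reduces to strong solutions first, so there is no substantive difference.
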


\proof
It is elementary to derive
the following system of coupled equations for the differences given in
\eqref{zvth-dif}:
\begin{subequations}\label{eqns-for-differences}
\bey
& & z_{tt} + A (z- \alpha\kappa  N_0 v_t) +  D(\zeta_t+z_t)-D(\zeta_t)  +
\cF_1(z)=0,
\label{z-eqn} \\[2mm]
& & M_{\gamma} v_{tt} + \cA^2 v  -\cA\theta + \beta\kappa N_0^* A z_t +
\cF_2(v)=0\,,
\label{v-eqn}
\\[2mm]
& & \theta_{t} + \cA \theta + \cA v_t =0\,,
\label{th-eqn}
\eey
\end{subequations}
with $\cF_i$ defined in \eqref{cal-f}.
Next, by standard energy methods we obtain
\begin{eqnarray*}
& & E^0_z(T) + G_t^T(z)
= E^0_z(t)+\alpha\kappa \int_t^T (AN_0v_t,z_t)\,d\tau
- \int_t^T (\cF_1(z),z_t)\,d\tau\,;
\\[1mm]
& & E^0_v(T) = E^0_v(t)-\beta\kappa \int_t^T (N_0^*A z_t,v_t)\,d\tau
-\int_t^T (\cF_2(v),v_t)\,d\tau +\int_t^T (\cA\theta,v_t)\,d\tau\,;
\\[1mm]
& & \hf \|\theta(T)\|^2+\int_t^T \|\cA^{1/2}\theta\|^2 \, d\tau = \hf \|\theta(t)\|^2 -
\int_t^T (\cA v_t, \theta)\,d\tau\,.
\end{eqnarray*}
Then, \eqref{key-equality} readily follows multiplying the first equation
by $\beta$, the second and third one by $\alpha$ and summing up.
\qed

\smallskip
{\bf Step 2. (Reconstruction of the energy integral) }
We return to the coupled system \eqref{eqns-for-differences} satisfied by
$(z,v,\theta)$.
We multiply equation \eqref{z-eqn}  by $z$, and integrate between
$0$ and $T$, thereby obtaining
\bey
\lefteqn{\hspace*{-15mm}
\int_0^T \|A^{1/2}z\|^2 dt \le  c_0\left( E^0_z(T)+ E^0_z(0)\right) +
\int_0^T \|z_t\|^2 dt
 }
\nonumber \\[1mm]
& & \qquad +\, H_0^T(z) + \alpha\kappa \int_0^T |(v_t,N_0^*A z)|\,dt
+ \int_0^T |(\cF_1(z),z)|\,dt\,,
\label{eq:1-step2}
\eey
where $H_0^T(z)$ is defined in \eqref{integrals-st-z}, and $c_0$ does
not depend on $\alpha$, $\beta$, $\gamma$.
It is clear from \eqref{F}  and   \eqref{energ-bnd} that
\[
 |(\cF_1(z),z)|\le \frac{C_R}{\beta}\|A^{1/2}z\|\, \|z\|\,.
\]
On the other hand, using \eqref{an-property} with $\epsilon=1/4-\delta$, $0<\delta<1/4$,
and interpolation we have that
\begin{align*}
    |(v_t,N_0^*Az)|
\le \|v_t\|_{\Gamma_0}\, ||N_0^*A^{1/2+\delta}|| \,\|A^{1/2-\delta}z\|_{\Om}
\le C  \|v_t\|_{\Gamma_0} \,\|A^{1/2-\delta}z\|_{\Om}
\\[2mm]
\le\eps  \|v_t\|^2_{\Gamma_0} +\eps_1 \|A^{1/2}z\|^2_{\Om}+
C_{\eps,\eps_1} \|z \|^2_{\Om}\,,
\end{align*}
for any $\eps,\eps_1>0$.
Then, by appropriately choosing and rescaling $\eps$ and $\eps_1$
we obtain from \eqref{eq:1-step2} that
\bey\label{potential-for-z}
\lefteqn{
\int_0^T ||A^{1/2}z||^2 dt
\le  C_0\left( E^0_z(T)+ E^0_z(0)\right)  }
\\[1mm]
& & \quad +\,   \eps\int_0^T  \|v_t\|_{\Gamma_0}^2 dt
 +2 \int_0^T ||z_t||^2  dt +
 C_1 H_0^T(z)+ C_2(R,\eps)\int_0^T ||z||^2 dt\,,\nonumber
\eey
for any $\eps>0$.

For the plate component we use the multiplier $\eps v+ \cA^{-1}\theta$.
(The {\em abstract} multiplier $\cA^{-1}\theta$ was introduced
in \cite{avalos-las-2}).
One gets, initially,
\begin{eqnarray*}
\lefteqn{
\eps(M_{\gamma} v_{tt},v)+ \eps\|\cA v\|^2 -\epsilon (\cA \theta, v)
+ (M_{\gamma} v_{tt},\cA^{-1}\theta) +(\cA v,\theta)}
\\[2mm]
& & \qquad
-\|\theta\|^2 + \beta \kappa (z_t,\eps v+ \cA^{-1}\theta)_{\Gamma_0}
+ (\cF_2(v),\epsilon v+\cA^{-1}\theta)= 0\,,
\end{eqnarray*}
which can be rewritten as
\begin{eqnarray}
\lefteqn{
\eps\frac{d}{dt}(M_{\gamma} v_t,v)-\eps \|M_{\gamma}^{1/2} v_t\|^2
+ \eps\|\cA v\|^2 +(1-\eps) (\cA \theta, v) }
\nonumber\\[2mm]
& & \quad
+ \,\frac{d}{dt}(M_{\gamma} v_t,\cA^{-1}\theta)+(M_{\gamma} v_t,\theta)
+ \|M_{\gamma}^{1/2}  v_t\|^2-\|\theta\|^2
\nonumber\\[2mm]
& & \qquad
+ \,\beta \kappa (z_t,\eps v+ \cA^{-1}\theta)_{\Gamma_0}
+ (\cF_2(v),\eps v+\cA^{-1}\theta)= 0\,,
\label{ineq:step-2}
\end{eqnarray}
by using elementary calculus and the equality
$\cA^{-1}\theta_t= -\theta-v_t$ (which is just \eqref{th-eqn}).
Now, a straightforward calculation shows that
\[
\cF_2(v):= F_2(u)-F_2(w)= -(Q-\|\cA^{1/2}u\|^2)\cA v +
(\cA^{1/2}(u+w),\cA^{1/2} v)\cA w\,;
\]
then, in view of the bounds in \eqref{energ-bnd}, we easily get
\[
|(\cF_2(v),\eps v+\cA^{-1}\theta)|\le C_R(\eps_0)\left(\|\cA v\|\, \|v\|
+\|\cA^{1/2} v\|\, \|\theta\|\right)\,,
\]
for any $\eps\in (0,\eps_0]$.
Thus, integrating in time between $0$ and $T$ the equality
\eqref{ineq:step-2} and by choosing appropriately $\eps>0$,
we see that
\bey
\lefteqn{
\int_0^T \left( ||\cA v||^2+ \|M_{\gamma}^{1/2} v_t\|^2 + \|\theta\|^2\right)  dt
}
\nonumber\\[1mm] & &
\le   c_0\left( E^0_{v,\theta}(T)+ E^0_{v,\theta}(0)\right)+
 c_1\int_0^T \|\cA^{1/2}\theta \|^2 dt
\nonumber\\[1mm]
& & \qquad +\,
 4\beta\kappa \Big|\int_0^T (z_t,\eps v+ \cA^{-1}\theta)_{\Gamma_0}\,dt\Big|
+ C(R)\int_0^T \|v \|^2 dt\,,
\label{potential-for-v}
\eey
where we have set
\[
E^0_{v,\theta}(t)=E^0_{v}(t)+\frac12\|\theta(t)\|^2\,.
\]
Integrating by parts in time and using the standard form
of the trace theorem it is easy to see that
\[
\Big|\int_0^T (z_t,\eps v+\cA^{-1}\theta)_{\Gamma_0}\,dt\Big|
\le  C_1\left( E^0(T)+ E^0(0)\right)+
\Big|\int_0^T (z,\eps v_t+\cA^{-1}\theta_t)_{\Gamma_0}\,dt\Big|,
\]
where $E^0(t)$ is given by \eqref{e0-full}.
Therefore, using once more the equality
$\cA^{-1}\theta_t=-\theta-v_t$ we obtain
\bey
\lefteqn{
\Big|\int_0^T (z_t,\eps v+\cA^{-1}\theta)_{\Gamma_0}\,dt\Big|
\le
 C_1\left( E^0(T)+ E^0(0)\right)+
\Big|\int_0^T (z,(\eps-1) v_t-\theta)_{\Gamma_0}\,dt\Big|}
\nonumber
\\ [1mm]
& &  \qquad \le C_1\left( E^0(T)+ E^0(0)\right)+
\eps_1 \int_0^T\|A^{1/2}z\|^2\,dt
+  \eps_2 \int_0^T\|M_{\gamma}^{1/2}v_t\|^2\,dt
\nonumber \\
& & \myspace+\,C_2 \int_0^T \|\theta\|^2_{\Gamma_0}\,dt
+C_3(\eps_1,\eps_2) \int_0^T \|z\|^2_{\Om}\,dt
\label{ineq:inner-product}
\eey
for every $\eps_1, \eps_2>0$.
Using now \eqref{ineq:inner-product} in \eqref{potential-for-v} shows
\bey
\int_0^T E^0_{v,\theta}(t)  dt
\le   c_0\left( E^0(T)+ E^0(0)\right)+
c_1 \int_0^T \|\cA^{1/2}\theta \|^2 dt
\nonumber\\[1mm]
 \qquad +\,
\eps \int_0^T \| A^{1/2} z \|^2 dt
+ C(R,\eps )\int_0^T\left(\|  z \|^2 +\|v \|^2\right) dt \,.
\label{potential-for-v1}
\eey
Combined with the estimate \eqref{potential-for-z}, \eqref{potential-for-v1}
establishes
\begin{eqnarray}
\int_{0}^{T} E^0(t) dt
& \le &  c_0\left[ E^0(T)+ E^0(0)\right]+
c_1 \int_0^T\left( \|z_t\|^2+ \|\cA^{1/2}\theta\|^2\right) dt
\nonumber \\
& &
 +\,  c_2  H_0^T(z)+
c_3(R) \int_0^T\left( \|z\|^2+ \|v\|^2\right) dt\,.
\label{potential-for-zv}
\end{eqnarray}
On the other hand, it follows from Lemma~\ref{le:ener-z} that
\bey \label{eq:before-last}
E^0(0)= E^0(T) +\beta G_0^T(z)+ \alpha  \int_0^T \|\cA^{1/2}\theta\|^2 dt
\\
+ \beta\int_0^T (\cF_1(z),z_t)\,d\tau
+\alpha \int_0^T (\cF_2(v),v_t)\,d\tau\,,\nonumber
\eey
and that
\be\label{ineq:last}
T E^0(T)\le \int_0^T E^0(t)dt
- \beta \int_0^T \int_t^T (\cF_1(z),z_t)\,d\tau\,dt
- \alpha \int_0^T \int_t^T (\cF_2(v),v_t)\,d\tau\,dt\,.
\ee
Therefore, combining \eqref{ineq:last} with \eqref{potential-for-zv}
and \eqref{eq:before-last}, we see that \eqref{ineq:main}
holds true, provided that $T$ is sufficiently large.
This concludes the proof of Proposition~\ref{pr:main}.
\qed


\section{Asymptotic smoothness and proof of Theorem~\ref{t:main}}
\label{s:as-smooth}
This section is mainly focused on the asymptotic compactness of the semi-flow
$S_t$ generated by the PDE system \eqref{pde-system}.
In fact, this property is central to the proof of existence of global
attractors; see \cite{babin,chueshov-book,hale,Tem97}.
There are many variants of the notion of asymptotic compactness
in the literature.
In the present case it is convenient to use the formulation introduced
by J.~Hale; see, e.g., \cite{hale} and the references therein.
We recall from \cite{hale} that a dynamical system
$(X,S_t)$ is said to be {\em asymptotically smooth}
iff for any bounded set $\cB$ in $X$ such that $S_t\cB\subset \cB$
for $t>0$ there exists a compact set $\cK$ in the closure $\overline{\cB}$
of $\cB$, such that
\[
\lim_{t\to +\infty}\sup_{y\in \cB} {\rm dist}_X \{S_ty, \cK\}=0\,.
\]
In order to establish this key property, we aim to apply a
compactness criterion due to \cite[Thm.~2]{khan}.
This result is recorded below in the abstract formulation given and used in
\cite{chu-las-jde06} (see also \cite[Chap.2]{mem}).

\begin{proposition}[\cite{chu-las-jde06}] \label{prop:khan}
Let $(X,S_t)$ be a dynamical system on a complete metric space $X$ endowed
with a metric  $d$.
Assume that for any bounded positively invariant set $B$ in $X$ and for any
$\epsilon>0$  there exists $T=T(\epsilon,B)$ such that
\[
d(S_Ty_1,S_Ty_2)\le \epsilon + \Psi_{\epsilon,B,T}(y_1,y_2)\,,
\qquad y_i\in B\,,
\]
where $\Psi_{\epsilon,B,T}(y_1,y_2)$ is a nonnegative function defined
on $B\times B$ such that
\be\label{sequential-limit}
\liminf_{m\to \infty} \liminf_{n\to \infty}\Psi_{\epsilon,B,T}(y_n,y_m)=0
\ee
for every sequence $\{y_n\}_n$ in $B$.
Then the dynamical system $(X,S_t)$ is asymptotically smooth.
\end{proposition}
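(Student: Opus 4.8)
The plan is to reduce the statement to a single estimate on the Kuratowski measure of noncompactness of the sets $S_tB$, and then to invoke the standard characterization of asymptotic smoothness. Write $\varrho(A)$ for the Kuratowski measure of noncompactness of a bounded set $A\subset X$, i.e. the infimum of those $d>0$ for which $A$ admits a finite cover by sets of diameter at most $d$; recall that $\varrho(A)=0$ exactly when $\overline A$ is compact, that $\varrho$ is monotone, and that $\varrho(\overline A)=\varrho(A)$. Fix a bounded set $B$ with $S_tB\subset B$ for all $t\ge 0$. Since $S_tB=S_s(S_{t-s}B)\subset S_sB$ whenever $t\ge s\ge 0$ (by positive invariance of $B$), the sets $\overline{S_sB}$ decrease with $s$; once we show $\varrho(S_sB)\to 0$ as $s\to\infty$, Kuratowski's theorem on nested closed sets guarantees that $\cK:=\bigcap_{s\ge 0}\overline{S_sB}$ is a nonempty compact subset of $\overline B$ which attracts $B$ in the required sense, and the proof is complete. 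Thus everything rests on establishing $\lim_{t\to\infty}\varrho(S_tB)=0$.

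The core step is the estimate $\varrho(S_TB)\le 2\epsilon$ for the time $T=T(\epsilon,B)$ furnished by the hypothesis, which I would prove by contradiction. Suppose $\varrho(S_TB)>2\epsilon$. A standard property of $\varrho$, proved by a greedy ball-covering argument, is that whenever $\varrho(A)>2\delta'$ the set $A$ contains an infinite $\delta'$-separated sequence; applying this with a value $\delta'\in(\epsilon,\varrho(S_TB)/2)$ produces points $y_n\in B$ with $d(S_Ty_n,S_Ty_m)\ge\delta'>\epsilon$ for all $n\ne m$. Feeding each ordered pair into the inequality of the hypothesis $d(S_Ty_1,S_Ty_2)\le\epsilon+\Psi_{\epsilon,B,T}(y_1,y_2)$ gives $\Psi_{\epsilon,B,T}(y_n,y_m)\ge\delta'-\epsilon>0$ for every $n\ne m$, whence $\liminf_{m\to\infty}\liminf_{n\to\infty}\Psi_{\epsilon,B,T}(y_n,y_m)\ge\delta'-\epsilon>0$, in direct contradiction with \eqref{sequential-limit}. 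Therefore $\varrho(S_TB)\le 2\epsilon$.

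Finally I would propagate this bound to all large times. For $t\ge T(\epsilon,B)$ write $S_tB=S_T(S_{t-T}B)$; since $S_{t-T}B\subset B$, both the hypothesised inequality and condition \eqref{sequential-limit} remain valid for sequences drawn from $S_{t-T}B$, so the contradiction argument applies verbatim to this subset and yields $\varrho(S_tB)=\varrho\big(S_T(S_{t-T}B)\big)\le 2\epsilon$ for every $t\ge T(\epsilon,B)$. As $\epsilon>0$ is arbitrary this gives $\lim_{t\to\infty}\varrho(S_tB)=0$, which combined with the first paragraph concludes the proof. The delicate point is the weak, iterated form of \eqref{sequential-limit}: a naive attempt to extract a single Cauchy subsequence of $\{S_Ty_n\}$ stumbles because $\liminf_{m}\liminf_{n}$ controls only specially paired indices rather than all pairs at once. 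The separated-sequence device circumvents this precisely because a $\delta'$-separated sequence forces a uniform lower bound on $\Psi_{\epsilon,B,T}$ over all pairs simultaneously, which is exactly what an iterated liminf equal to zero cannot tolerate; this is the step I expect to require the most care.
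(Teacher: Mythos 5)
The paper does not prove this proposition: it is imported verbatim from \cite{chu-las-jde06} (ultimately from \cite{khan}), so there is no in-paper argument to compare against. Your proof is correct and is essentially the standard one for this criterion: reduce asymptotic smoothness to $\varrho(S_tB)\to 0$ for the Kuratowski measure $\varrho$ via the nested-set theorem, and rule out $\varrho(S_TB)>2\epsilon$ by extracting an infinite $\delta'$-separated sequence in $S_TB$ (using $\varrho\le 2\beta$ for the ball measure and a maximal-separated-set argument), which forces $\Psi_{\epsilon,B,T}(y_n,y_m)\ge\delta'-\epsilon>0$ for all $n\ne m$ and hence contradicts the iterated $\liminf$ condition; the propagation to all $t\ge T$ via $S_tB=S_T(S_{t-T}B)\subset S_TB$ and positive invariance is also handled correctly. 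No gaps.
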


The main result in this section is the following assertion.

\begin{theorem} \label{thm:asympt-smooth}
Let Assumption~\ref{hypo-1} hold.
Then the dynamical system $(Y,S_t)$ generated by the PDE problem
\eqref{pde-system} is asymptotically smooth.
\end{theorem}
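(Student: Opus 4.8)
The plan is to verify the hypotheses of the abstract compactness criterion stated in Proposition~\ref{prop:khan}, thereby reducing asymptotic smoothness to establishing a single inequality of the form $d(S_Ty_1,S_Ty_2)\le \eps + \Psi_{\eps,B,T}(y_1,y_2)$, where $\Psi$ satisfies the double-limit condition \eqref{sequential-limit}. Since the metric on $Y$ is induced by the norm \eqref{norm}, and this is comparable to the energy $E^0(T)$ of the difference of two trajectories (up to the contribution of the lower-order potential terms, which are themselves of $\Psi$-type), the heart of the matter is to bound $E^0(T)$ for the difference variables $(z,v,\theta)$ defined in \eqref{zvth-dif}. The natural starting point is therefore the main inequality \eqref{ineq:main} of Proposition~\ref{pr:main}, which already has the right shape: the left-hand side controls $TE^0(T)$, while the right-hand side splits into a damping/thermal-dissipation part, the terms $H_0^T(z)$ and $\Psi_T(z,v)$, and the compact lower-order remainder $c_2(R)\int_0^T(\|z\|^2+\|v\|^2)\,dt$.

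First I would fix a bounded positively invariant set $B\subset\cW_R$ and two trajectories originating in $B$, and apply \eqref{ineq:main} to obtain the bound on $E^0(T)$. The task is then to absorb or reclassify each term on the right-hand side as either genuinely small (for $T$ large, after dividing by $T$) or of $\Psi$-type. The leading term $c_0\int_0^T(\|z_t\|^2+\alpha\|\cA^{1/2}\theta\|^2)\,dt+\beta c_0 G_0^T(z)$ must be controlled by the dissipation. Here the energy inequality \eqref{energy-ineq}, applied to each of the two individual trajectories, gives a uniform bound on the integrated wave and thermal dissipation, so that $\int_0^T(\|z_t\|^2+\alpha\|\cA^{1/2}\theta\|^2)\,dt$ and $G_0^T(z)$ are finite; the crucial additional input is Assumption~\ref{hypo-1}, specifically the structural condition \eqref{lip-g2}, which lets one estimate $\|z_t\|^2$ pointwise by $\eps+c_\eps\,z_tg(z_t)$ and thereby relate the kinetic energy of the difference to the monotone damping product $(D(\zeta_t+z_t)-D(\zeta_t),z_t)$ that appears in $G_0^T(z)$. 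This is where the $\eps$ in the criterion is spent: after dividing the whole inequality by $T$ and sending $T\to\infty$, the $\eps$-part survives while the bounded dissipation integrals, divided by $T$, vanish.

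Next I would identify the remaining terms $H_0^T(z)$, $\Psi_T(z,v)$, and $c_2(R)\int_0^T(\|z\|^2+\|v\|^2)\,dt$ with the function $\Psi_{\eps,B,T}$ and verify the double-limit condition \eqref{sequential-limit}. The lower-order integral $\int_0^T(\|z\|^2+\|v\|^2)\,dt$ is manifestly of this type: along a sequence $\{y_n\}$ in the bounded set $B$, the corresponding trajectories lie in a bounded region of $Y$, hence (over the fixed finite time interval $[0,T]$) in a set that is compact in the lower topology $L_2(\Om)\times L_2(\Gamma_0)$ by Aubin–Lions-type compactness; passing to the iterated liminf along $n$ then $m$ kills these differences. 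The term $\Psi_T(z,v)$, built from the nonlinear forces $\cF_1,\cF_2$ of \eqref{cal-f}, is handled the same way, using that $F_1:H^1(\Om)\to L_2(\Om)$ is bounded and that $\cF_1,\cF_2$ depend on the low-order variables in a way that is continuous under the compact convergence; the products $(\cF_1(z),z_t)$ and $(\cF_2(v),v_t)$ integrated in time are sequentially negligible by the same compactness/Lipschitz argument adapted from \cite{chu-las-jde06}. The term $H_0^T(z)$ is treated analogously, pairing the damping difference against the low-order $z$ rather than $z_t$.

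The main obstacle I anticipate is the careful treatment of the damping term and the critical nonlinearity $F_1$. Because $g$ is only assumed monotone with critical growth \eqref{lip-g1} and may be constant on intervals, one cannot bound $\|z_t\|$ directly; the passage through \eqref{lip-g2} is essential but delicate, and one must keep track of how the $\eps$ generated there propagates through the division by $T$ and the $\limsup$. Equally, since $F_1$ is bounded but \emph{not} compact as a map $H^1(\Om)\to L_2(\Om)$, the verification of \eqref{sequential-limit} for the $\Psi_T$ terms cannot rely on compactness of $F_1$ itself; instead one exploits that the \emph{time-integrated} pairings involve the difference variables evaluated in lower-order norms, where compactness over the finite interval $[0,T]$ is available. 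Assembling these estimates, dividing \eqref{ineq:main} by $T$, choosing $T$ large enough that the dissipation integrals contribute less than $\eps$, and collecting the residual terms into $\Psi_{\eps,B,T}$ yields precisely the hypothesis of Proposition~\ref{prop:khan}, whence $(Y,S_t)$ is asymptotically smooth.
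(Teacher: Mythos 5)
Your proposal follows essentially the same route as the paper: reduce to Proposition~\ref{prop:khan}, bound $E^0(T)$ via the main inequality \eqref{ineq:main}, use the integrated dissipation bounds coming from \eqref{energy-ineq} together with \eqref{lip-g2} to produce the $\eps T$ contribution, and relegate $H_0^T(z)$, $\Psi_T(z,v)$ and the lower-order integrals to the function $\Psi_{\eps,B,T}$, whose sequential-limit property \eqref{sequential-limit} is verified by compactness arguments as in \cite{bcl-cpaa-06}. One minor imprecision: $G_0^T(z)$ is \emph{not} controlled directly by the two individual trajectories' dissipation integrals (monotonicity of $D$ gives no such comparison for the cross terms); the paper instead bounds $\beta G_0^T(z)+\alpha\int_0^T\|\cA^{1/2}\theta\|^2\,dt$ by $C_R$ plus the nonlinear force pairings using the difference energy identity \eqref{key-equality} evaluated at $t=0$, and likewise tames $H_0^T(z)$ through the pointwise inequality \eqref{ineq:difficult-g} rather than by compactness alone.
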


\proof
Since any bounded positively invariant set belongs to $\cW_R$ for some $R>R_*$,
where $\cW_R$ is defined by (\ref{wr-set}), it is sufficient to
consider the case $B=\cW_R$ for every $R>R_*$ only.
Let $y_1, y_2\in \cW_R$. The notation used below is the same as in
Proposition~\ref{pr:main}. Namely, we denote the solutions corresponding
to initial data $y_1$ and $y_2$, respectively, by
$$
(h(t),h_t(t),u(t),u_t(t),\psi(t)):=S_t y_1\,,\quad
(\zeta(t),\zeta_t(t),w(t),w_t(t),\xi(t)):=S_t y_2\,,
$$
and define
$z(t):=h(t)-\zeta(t)$, $v(t):=u(t)-w(t)$, and $\theta(t):=\psi(t)-\xi(t)$.
We seek to establish an estimate of $d(S_Ty_1,S_Ty_2)$ as required
by Proposition~\ref{prop:khan}; this will eventually enable us to
achieve the conclusion of Theorem~\ref{thm:asympt-smooth}.
To accomplish this goal, a major role is played by the following result.

\begin{proposition} \label{prop:energy-in-T}
Let the assumptions of Theorem~\ref{thm:asympt-smooth} be in force.
Then, given $\epsilon>0$ and $T>1$ there exists constants $C_\epsilon(R)$
and $C(R,T)$ such that
\bey
E^0(T) \le \epsilon + \frac{1}{T}\,
\big[\, C_\epsilon(R)+ \Psi_T(z,v) \big] + C(R,T) \,lot(z,v)\,,
\label{ineq:energy-in-T}
\eey
where $E^0(t):=\beta E^0_z(t)+\alpha E^0_v(t)+\frac{\alpha}2\|\theta(t)\|^2$ with
 $E_z^0(t)$ and $E_v^0(t)$ given in \eqref{zero-energies},
the functional $\Psi_T(z,v)$ is given by \eqref{psi-def} while
$lot(z,v)$ is defined by
\[
lot(z,v):= \sup_{[0,T]} \|z(t)\|
+ \sup_{[0,T]}\| v(t)\|\,.
\]
\end{proposition}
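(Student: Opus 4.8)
The plan is to read off \eqref{ineq:energy-in-T} directly from the master inequality \eqref{ineq:main} of Proposition~\ref{pr:main}, estimating each summand on its right-hand side by one of three types of quantity: a term of order $\epsilon T$, a finite constant $C_\epsilon(R)$, or a genuinely lower-order contribution carrying the factor $lot(z,v)$. Since the constants $c_0,c_1,c_2(R)$ in \eqref{ineq:main} are already independent of $\gamma,\kappa$, and every auxiliary bound below rests on energy relations holding uniformly in these parameters, the resulting $C_\epsilon(R)$ and $C(R,T)$ will inherit that uniformity. First I would discard the nonnegative term $\int_0^T E^0(t)\,dt$ on the left of \eqref{ineq:main} and divide through by $T$, so that it suffices to bound each surviving summand after division by $T$.

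The dissipative terms are controlled by the energy balance. Evaluating \eqref{key-equality} at $t=0$, using $E^0(T)\ge 0$ and the uniform bound $E^0(0)\le C_R$ coming from \eqref{energ-bnd} and the invariance of $\cW_R$, and noting that the two nonlinear time integrals on its right-hand side are dominated in absolute value by $\Psi_T(z,v)$ of \eqref{psi-def}, I obtain $\beta G_0^T(z)+\alpha\int_0^T\|\cA^{1/2}\theta\|^2\,d\tau\le C_R+\Psi_T(z,v)$. This handles the whole bracket $c_0[\,\alpha\int_0^T\|\cA^{1/2}\theta\|^2+\beta G_0^T(z)\,]$ by $\tfrac1T(C_R+\Psi_T(z,v))$. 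The velocity integral $\int_0^T\|z_t\|^2\,dt$ is where Assumption~\ref{hypo-1} enters: applying \eqref{lip-g2} pointwise to $h_t$ and to $\zeta_t$ and using $|z_t|^2\le 2|h_t|^2+2|\zeta_t|^2$ gives $|z_t|^2\le 4\epsilon+2c_\epsilon\big(h_tg(h_t)+\zeta_tg(\zeta_t)\big)$; integrating over $\Omega\times(0,T)$ and invoking the energy inequality \eqref{energy-ineq} \emph{separately} for the two trajectories (which bounds $\int_0^T(D(h_t),h_t)$ and $\int_0^T(D(\zeta_t),\zeta_t)$ by $C_R$) yields $\int_0^T\|z_t\|^2\,dt\le C\epsilon T+C_\epsilon(R)$. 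Finally $\int_0^T(\|z\|^2+\|v\|^2)\,dt$ is immediately lower order: as $\|z(t)\|,\|v(t)\|\le C_R$ on $\cW_R$, it is at most $C_R\,T\,lot(z,v)$, contributing $C(R)\,lot(z,v)$ after division by $T$.

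The hard part will be the term $H_0^T(z)$ of \eqref{integrals-st-z}, precisely because the damping is allowed to have \emph{critical} growth ($p=5$ when $n=3$), so a plain Hölder estimate collides with the borderline embedding $H^1(\Omega)\hookrightarrow L^6(\Omega)$ and leaves no margin to pull out a power of the low norm $\|z\|$. To get around this I would split $\Omega\times(0,T)$ into the set $\{|h_t|\le\rho,\ |\zeta_t|\le\rho\}$ and its complement, with $\rho$ a large parameter to be fixed. On the first set the continuity of $g$ gives $|g(h_t)-g(\zeta_t)|\le 2\max_{[-\rho,\rho]}|g|$, so that portion of $H_0^T(z)$ is at most $C(\rho)\int_0^T\|z\|\,dt\le C(\rho)\,T\,lot(z,v)$. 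On the complement I would use $g(s)s\ge 0$ and the bound $|g(h_t)|\le \rho^{-1}g(h_t)h_t$ valid where $|h_t|>\rho$, combine it with the growth bound \eqref{lip-g1} and Hölder's inequality into the critical pair $L^{6/5}\times L^6$ (where $\|z\|_{L^6}\le C\|z\|_{1,\Omega}\le C_R$), and exploit the uniform a priori bounds $\int_0^T(D(h_t),h_t),\int_0^T(D(\zeta_t),\zeta_t)\le C_R$ to render this contribution as small as $\epsilon T$ once $\rho$ is chosen large, at the cost of a constant $C_\epsilon(R)$. Altogether this gives $H_0^T(z)\le\epsilon T+C_\epsilon(R)+C(R,T)\,T\,lot(z,v)$.

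Substituting the four estimates into \eqref{ineq:main} divided by $T$, and rescaling $\epsilon$ to absorb the fixed multiplicative constants, yields exactly \eqref{ineq:energy-in-T}. I expect the only real difficulty to be the $H_0^T(z)$ estimate in the critical regime; the remaining steps are routine consequences of the energy relations of Section~\ref{s:prelim} and of Lemma~\ref{le:ener-z}.
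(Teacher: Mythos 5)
Your argument is correct and its skeleton coincides with the paper's: both start from \eqref{ineq:main}, control $\beta G_0^T(z)+\alpha\int_0^T\|\cA^{1/2}\theta\|^2$ by evaluating the energy identity \eqref{key-equality} at $t=0$ and absorbing the nonlinear integrals into $\Psi_T(z,v)$ (this is exactly \eqref{ineq-for-timezero}), obtain $\int_0^T\|z_t\|^2\,dt\le \eps T+C_\eps(R)$ from Assumption~\ref{hypo-1} together with the uniform dissipation bounds \eqref{bounds} (this is \eqref{g0T}), and treat $\int_0^T(\|z\|^2+\|v\|^2)\,dt$ as a lower-order term. The one place where you genuinely deviate is $H_0^T(z)$: the paper disposes of it in one line by invoking the ready-made inequality \eqref{ineq:difficult-g} (quoted from \cite{bcl-cpaa-06} and \cite{mem}), which bounds $|(D(\zeta_t+z_t)-D(\zeta_t),z)|$ by $C_0[(D(\zeta_t),\zeta_t)+(D(h_t),h_t)]\|A^{1/2}z\|+C_0\|z\|$ and then integrates using \eqref{bounds}, yielding $H_0^T(z)\le C_R+CT\,lot(z,v)$. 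Your truncation of $\Om\times(0,T)$ into small- and large-velocity sets, with $|g(s)|\le\rho^{-1}sg(s)$ on the large set and the critical pairing $L^{6/5}\times L^6$, is in substance a self-contained proof of that cited inequality, so what you buy is independence from the external reference at the cost of redoing its proof. One small imprecision: on the large-velocity set, choosing $\rho$ large is not what makes the contribution of size $\eps T$; rather, after Hölder you are left with $\int_0^T(D(h_t),h_t)^{5/6}\,dt\le T^{1/6}C_R^{5/6}$ (the exponent $5/6<1$ coming from $|g(s)|^{1/5}\lesssim 1+|s|$), and it is Young's inequality in $T$ that produces $\eps T+C_\eps(R)$; alternatively you reach the paper's sharper conclusion $H_0^T(z)\le C_R+CT\,lot(z,v)$ directly. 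Either form suffices after dividing by $T$, so the proof closes as you describe.
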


\proof
To establish \eqref{ineq:energy-in-T}, we return to the main inequality
\eqref{ineq:main} and proceed with the estimate of its right hand side.
Preliminarly, let us record a useful inequality, which holds under the only
Assumption~\ref{hypo-0}; see, e.g., \cite[Lemma~5.3]{bcl-cpaa-06}, or
\cite[Section~5.3]{mem}.
There exists a constant $C_0>0$ such that
\be \label{ineq:difficult-g}
|(D(\zeta+z)-D(\zeta),h)|
\le  C_{0}\,\big[(D(\zeta),\zeta)+ (D(\zeta+z),\zeta+z)\big]\,
\|A^{1/2}h\|+ C_{0}\| h\|
\ee
for any $\zeta, z, h\in \cD(A^{1/2})$, where $D$ is the damping operator
given by \eqref{Dh-mem}.
In view of \eqref{ineq:difficult-g}, $H_s^t(z)$ defined by
\eqref{integrals-st-z} satisfies
\[
H^T_0(z)\le C_0\int_0^T\left[(D(\zeta_t),\zeta_t)
+ (D(h_t),h_t)\right] \|A^{1/2}z\| dt +C_1 \int_0^T \|z\|\,dt\,.
\]
Moreover, from the energy inequality \eqref{energy-ineq} we know
\begin{subequations}\label{bounds}
\bey
& &
\beta \int_0^t (D(h_t),h_t)dt
+ \alpha \int_0^t \|\cA^{1/2}\psi\|^2 dt \le C_{R}\,,
\\
& & \beta \int_0^t (D(\zeta_t),\zeta_t) dt
+ \alpha  \int_0^t \|\cA^{1/2}\xi\|^2 dt \le C_{R}\,,
\eey
\end{subequations}
where crucially $C_{R}$ does not depend on $t$.
Then, the estimates \eqref{bounds} combined with the fact that
$\|A^{1/2}z(t)\|\le C_R$ for all $t\in[0,T]$ show that
\begin{equation}\label{h0T}
H_0^T(z)\le C_R + C\,T\,lot(z,v)\,.
\end{equation}
Next, using now the inequality in Assumption~\ref{hypo-1}
and once again the uniform estimates \eqref{bounds}, one can see that
\begin{equation}\label{g0T}
\int_0^T\left( \|z_t\|^2  + \|\cA^{1/2}\theta\|^2 \right)dt
\le \eps T + C_\eps(R)\quad\mbox{for every}\; \eps>0\,.
\end{equation}
On the other hand, taking $t=0$ in \eqref{key-equality} and using the
fact that $E^0(0)\le C_R$, we get
\be \label{ineq-for-timezero}
\beta  G_0^T(z) + \alpha  \int_0^T\|\cA^{1/2}\theta\|^2 dt
\le C_R+\beta\Big|\int_0^T (\cF_1(z),z_t)\,d\tau\Big|
+ \alpha\Big|\int_0^T (\cF_2(v),v_t)\,d\tau\Big|\,.
\ee
Therefore, \eqref{ineq:energy-in-T} follows from \eqref{ineq:main}
of Proposition~\ref{pr:main}
combining the estimates \eqref{h0T}, \eqref{g0T} and \eqref{ineq-for-timezero}.
\qed

\medskip
We are now in a position to complete the proof
of Theorem~\ref{thm:asympt-smooth}.
It follows from Proposition~\ref{prop:energy-in-T} that given $\epsilon>0$
there exists $T=T(\epsilon)>1$ such that for initial data $y_1, y_2\in B$
we have
\begin{eqnarray*}
||S_Ty_1-S_Ty_2||_Y & = &||(z(T),z_t(T),v(T),v_t(T),\theta(T))||_Y
\\
& \le & C |E^0(T)|^{1/2}
\le   \epsilon + \Psi_{\epsilon,B,T}(y_1,y_2)\,,
\end{eqnarray*}
where
\be\label{psi}
\Psi_{\epsilon,B,T}(y_1,y_2)=
C_{B,\epsilon,T}\big\{ \Psi_{T}(z,v)+lot(z,v)\big\}^{1/2}\,
\ee
with $ \Psi_{T}(z,v)$ is given by \eqref{psi-def}.
Thus, since we aim to invoke Proposition~\ref{prop:khan}, what we need to
show is the validity of the sequential limits
\eqref{sequential-limit} for $\Psi_{\epsilon,B,T}$ defined by \eqref{psi}.
This requires pretty much the same arguments used
in \cite{bcl-cpaa-06}
(see also \cite{snowbird} for the case of a pure wave equation and
\cite{chu-las-jde06} for the case of von Karman plates).

\medskip
\noindent
{\em Proof of Theorem~\ref{t:main}. }
Since $\cW_R$ is bounded and positively invariant,
Theorem~\ref{thm:asympt-smooth} implies the existence of
a compact global attractor $\mfA_R$ for the dynamical system
$(\cW_R,S_t)$, for each $R\ge R_*$.
Choose now $R_0\ge R_*+1$ such that the set $\cN$ of equilibria lies
in $\cW_{R_0-1}$.
By \eqref{lip-g2} we have that $sg(s)>0$ for $s\neq 0$.
Therefore the energy inequality (\ref{energy-ineq}) implies that
the energy $\cE$ given by \eqref{total-energy} is a strict Lyapunov
function for $(\cW_R,S_t)$.
This, in turn, implies (see, e.g., \cite[Theorem 3.2.1]{babin} or
\cite[Theorem 1.6.1]{chueshov-book}) that $\mfA_R=M^u(\cN)$ and
by Lemma \ref{lemma:b-stat} $\mfA_R$ does not depend on $R$ for $R\ge R_0$;
moreover, relation (\ref{stb-to-n}) holds.

We finally observe that since the energy $\cE$ is non increasing along the
trajectories and as the attractor has the structure $\mfA=M^u(\cN)$ and $\cN\subset\mfA$, we have
that
\[
\textrm{sup}\, \big\{ \cE(U): \; U\in \mfA\big \}
= \textrm{sup}\,\big\{ \cE(U): \; U\in \cN\big\}\,.
\]
This implies, in particular, that the global attractor is such that
\[
\mfA \subseteq \big\{ U\in Y: \; \|U\|_Y\le R\big\}\,,
\]
where $R$ does not depend on $\gamma$ and $\kappa$. Therefore,
\eqref{first-bnd} holds, and the proof of Theorem \ref{t:main} is
completed.
\qed


\section{Stabilizability estimate}\label{s:stab-est}
In this section we derive a {\em stabilizability estimate} which
will play a key role in the proofs of both finite-dimensionality
and regularity of attractors.
In fact, it will enable us to apply some abstract results presented in
\cite{chu-las-1} and \cite{mem} which are central to the proof of
Theorem~\ref{t:main2}.

\begin{proposition}[Stabilizability estimate]  \label{p:stabiliz}
Let Assumption~\ref{hypo-2} hold.
Then there exist positive constants $C_1$, $C_2$ and $\om$ depending on $R$
but independent of $\gamma,\kappa\in [0,1]$
such that, for any $y_1, y_2\in\cW_R$, the following estimate holds true:
\begin{equation}\label{stab-est}
\| S_ty_1- S_t y_2\|^2_Y\le C_1 e^{-\om t}\| y_1- y_2 \|^2_Y +
C_2\, lot_t(h-\zeta,u-w)\,,\quad  t>0\,,
\end{equation}
where
\begin{equation}\label{lot-t-zv}
lot_t(z,v)=  \max_{[0,t]}
\left( \|z(\tau)\|^2_{1-\delta,\Om}+ \|v(\tau)\|^2_{2-\delta,\Gamma_0}\right).
\end{equation}
Above, we have used the notation
$$
(h(t),h_t(t),u(t),u_t(t),\psi(t)):=S_t y_1\,,\quad
(\zeta(t),\zeta_t(t),w(t),w_t(t),\xi(t)):=S_t y_2\,.
$$
\end{proposition}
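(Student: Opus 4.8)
The plan is to build the stabilizability estimate \eqref{stab-est} upon the main inequality \eqref{ineq:main} of Proposition~\ref{pr:main}, which already captures the energy $E^0$ in terms of the dissipation integrals plus lower-order terms. The route is to combine \eqref{ineq:main} with the energy identity \eqref{key-equality} of Lemma~\ref{le:ener-z} in order to produce an inequality of the form $E^0(T)\le q\,E^0(0)+(\textrm{dissipation})+(\textrm{lower order})$ with a coefficient $q<1$ (after absorbing), from which exponential decay modulo a compact perturbation follows by a standard iteration over time intervals of length $T$. The decisive new ingredient, under the strengthened Assumption~\ref{hypo-2}, is to control the superlinear terms collected in $\Psi_T(z,v)$ from \eqref{psi-def} \emph{by the dissipation and by genuinely lower-order quantities}, rather than merely by the energy. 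This is exactly where the growth bounds \eqref{g-for-dim0} on $g$ and the $C^2$ bound on $f$ enter, and it is the content that must be supplied by the forthcoming Lemma~\ref{le:f12}.

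First I would rewrite the outcome of Proposition~\ref{pr:main}: for $T\ge T_0$,
\begin{equation*}
T E^0(T)+\int_0^T E^0(t)\,dt \le c_0\Big[\int_0^T\!\big(\|z_t\|^2+\alpha\|\cA^{1/2}\theta\|^2\big)dt+\beta G_0^T(z)\Big]+c_1\big[H_0^T(z)+\Psi_T(z,v)\big]+c_2(R)\!\int_0^T\!\big(\|z\|^2+\|v\|^2\big)dt\,.
\end{equation*}
The integral $\int_0^T(\|z_t\|^2+\alpha\|\cA^{1/2}\theta\|^2)\,dt$ together with $G_0^T(z)$ represent the genuine dissipation of the difference flow: using the monotonicity lower bound $m$ in \eqref{g-for-dim0} one has $m\|z_t\|^2\le(D(\zeta_t+z_t)-D(\zeta_t),z_t)$ pointwise, so these terms are bounded by $C\,G_0^T(z)$, and by \eqref{key-equality} taken between $0$ and $T$ the quantity $\beta G_0^T(z)+\alpha\int_0^T\|\cA^{1/2}\theta\|^2$ equals $E^0(0)-E^0(T)$ minus the $\Psi$-type terms. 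The $\|z\|^2$ and $\|v\|^2$ integrals are clearly lower order relative to the norm \eqref{lot-t-zv}, since $\|z\|_\Omega\le\|z\|_{1-\delta,\Omega}$ and $\|v\|_{\Gamma_0}\le\|v\|_{2-\delta,\Gamma_0}$; and $H_0^T(z)$ is handled through the auxiliary inequality \eqref{ineq:difficult-g}, whose right-hand side splits into a dissipation factor times $\|A^{1/2}z\|$ (controlled via the uniform bound $\|A^{1/2}z(t)\|\le C_R$) plus $C_0\|z\|$, again lower order. Assembling these, \eqref{ineq:main} yields $TE^0(T)\le C\,[E^0(0)-E^0(T)]+c_1\Psi_T(z,v)+C(R)\,T\,lot_T(z,v)$, hence after moving $E^0(T)$ to the left and choosing $T$ large,
\begin{equation*}
E^0(T)\le \tfrac{C}{C+T}\,E^0(0)+\tfrac{c_1}{C+T}\,\Psi_T(z,v)+C(R)\,lot_T(z,v)\,.
\end{equation*}

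The hard part will be the estimate of $\Psi_T(z,v)$, and this is where I expect the main obstacle to lie. The four terms in \eqref{psi-def} involve the superlinear increments $\cF_1(z)$ and $\cF_2(v)$ paired with $z_t$ and $v_t$ (and their double time integrals), and a naive bound only returns a multiple of the energy, which is useless because it cannot be absorbed for the contraction. The goal is instead to dominate $\Psi_T(z,v)$ by $\eta\int_0^T E^0(t)\,dt+C_\eta(R)\,lot_T(z,v)$ with $\eta$ small, trading the critical growth against either the dissipation or truly lower Sobolev norms; this is precisely the role reserved for Lemma~\ref{le:f12}, invoking $f\in C^2$ with $|f''(s)|\le C(1+|s|^{q-1})$ and the upper bound in \eqref{g-for-dim0} (with the sharp exponent $\sigma=2/3$ when $n=3$) to keep all constants independent of $\gamma,\kappa$. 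Once $\Psi_T$ is absorbed, I obtain a one-step contraction $E^0(T)\le\rho\,E^0(0)+C(R)\,lot_T(z,v)$ with $\rho<1$ for $T$ large; iterating on intervals $[kT,(k+1)T]$ and summing the geometric series, using that $lot_t$ is nondecreasing in $t$ and the local Lipschitz bound \eqref{SF} to pass from the discrete times back to all $t>0$, produces the exponential rate $\om=-(\ln\rho)/T>0$ and the claimed form \eqref{stab-est}, with $C_1,C_2$ depending on $R$ but not on $\gamma,\kappa$ since every constant along the way has been kept uniform in those parameters.
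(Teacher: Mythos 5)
Your overall skeleton---start from \eqref{ineq:main}, combine with the energy identity \eqref{key-equality} to convert the dissipation integrals into $E^0(0)-E^0(T)$ plus nonlinear remainders, absorb, obtain a one-step contraction, and iterate over intervals of length $T$---is the same as the paper's. But there is a genuine gap at the decisive point, and it is not merely that you defer to Lemma~\ref{le:f12} without proving it (that lemma is stated and proved \emph{inside} the proof of Proposition~\ref{p:stabiliz}, so its proof is part of what you owe; the paper's decomposition $(\cF_2(v),v_t)=\frac{d}{dt}Q(v)+P(v)$ via the substitution $w_t=-\cA^{-1}\xi_t-\xi$ is the technical core of the whole section). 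The deeper problem is that you assert the target bound $\Psi_T(z,v)\le \eta\int_0^T E^0\,dt + C_\eta(R)\,lot_T(z,v)$, and this is \emph{not} what Lemma~\ref{le:f12} delivers, nor is it achievable when $f$ has critical growth ($q=2$, $n=3$) or for the Berger nonlinearity when $\gamma=0$: since $F_1$ is bounded but not compact from $H^1(\Om)$ into $L_2(\Om)$, the term $\int(\cF_1(z),z_t)$ cannot be traded for a small multiple of the energy plus genuinely lower-order norms. What the integration by parts in time actually produces (using $f\in C^2$ and \eqref{g-for-dim0}) are terms that are \emph{quadratic in the difference energy but weighted by the dissipation densities of the two individual trajectories}, namely $\int_0^T(\|h_t\|^2+\|\zeta_t\|^2)\|A^{1/2}z\|^2\,d\tau$ in \eqref{cf-1} and $\int_0^T\|\xi\|\left(\|\cA v\|^2+\|v_t\|^2\right)d\tau$ in \eqref{cf-2}. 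These are neither small relative to $\int E^0$ nor lower order, so they cannot be absorbed; they must be carried along in the quantity $\Xi_T(z,v)$ of \eqref{Xi-T}.

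As a consequence, the one-step estimate is $E^0(T)\le q\,E^0(0)+C_{R,T}\,\Xi_T(z,v)$ rather than $E^0(T)\le \rho\,E^0(0)+C\,lot_T(z,v)$, and your final step collapses: iterating and summing the geometric series does not directly give \eqref{stab-est} but instead
\[
E^0(t)\le C_1 e^{-\om t}E^0(0)+C_2\Big\{ lot_t(z,v)+\int_0^t e^{-\om(t-\tau)}\big[D_{h,\zeta}(\tau)+\|\cA^{1/2}\xi(\tau)\|^2\big]E^0(\tau)\,d\tau\Big\}.
\]
The convolution term must then be removed by Gronwall's lemma, and this is only possible because the energy inequality \eqref{energy-ineq} gives the \emph{uniform-in-$t$} bound $\int_0^t[D_{h,\zeta}(\tau)+\|\cA^{1/2}\xi(\tau)\|^2]\,d\tau\le C_R$, so the resulting exponential factor is a constant depending only on $R$ (and independent of $\gamma,\kappa$). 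This Gronwall step, and the integrability of the weights that makes it work, is the structural heart of the stabilizability estimate for critical nonlinearities; your proposal omits it because the (unattainable) form you postulated for the $\Psi_T$ bound would have made it unnecessary.
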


\proof
Given $y_1, y_2\in\cW_R$, let $S_t y_1=(h,h_t,u,u_t,\psi)$ and
$S_t y_2=(\zeta,\zeta_t,w,w_t,\xi)$ be the corresponding
solutions, as introduced in Proposition~\ref{pr:main}.
Moreover, let $z=h-\zeta$, $v=u-w$, $\theta=\psi-\xi$, as originally defined
in \eqref{zvth-dif}.
We recall that for these solutions the bounds \eqref{energ-bnd},
\eqref{energ-bnd2} and \eqref{bounds} hold true.
Since we seek to establish an estimate of $\|S_ty_1- S_t y_2\|^2_Y$,
our starting point will be once again the fundamental inequality
\eqref{ineq:main} pertaining to the energy
$E^0(t)$ given by \eqref{e0-full}.
Then, we need to produce accurate bounds for the various terms which occur
in the right hand side of \eqref{ineq:main}.
It will be used throughout that, due to  \eqref{energ-bnd}
and \eqref{energ-bnd2}, there exists a constant $C_R$ such that
$\|S_ty_i\|_Y\le C_R$, $i=1,2$.

The following Lemma provides two key estimates pertaining to the nonlinear
terms $\cF_1$, $\cF_2$.
\begin{lemma}\label{le:f12}
Under Assumption~\ref{hypo-2}, the following estimates hold true
for some $\delta>0$:
\begin{subequations}
\label{cf}
\bey
\lefteqn{\hspace{-8mm}
\Big|\int_t^T (\cF_1(z),z_t)\,d\tau \Big|
\le \eps \int_0^T  \|A^{1/2}z\|^2_{\Om}\, d\tau
}
\nonumber
\\[2mm]
& &  
+ \,C_\eps(R) \int_0^T \left( \|h_t\|^2_{\Om} +
\|\zeta_t\|^2_{\Om}\right) \|A^{1/2}z\|^2_{\Om}\, d\tau
+ \,C_{R,T}\max_{[0,T]} \|z\|^2_{1-\delta,\Om}\,,
\label{cf-1}
\\[3mm]
\lefteqn{\hspace{-8mm}
\Big|\int_t^T (\cF_2(v),v_t)\,d\tau\Big|
\le  \eps\int_0^T \left(\|\cA v\|^2 + \|v_t\|^2\right)\,d\tau
}
\nonumber
\\[2mm]
& & 
+\, C_R \int_0^T\left( \|\cA v \|^2 + \|v_t\|^2\right)\|\xi\|\,d\tau
+ \,C_\eps(R,T)\max_{[0,T]}\|v\|^2_{1,\Gamma_0}\,,
\label{cf-2}
\eey
\end{subequations}
for all $t\in [0,T]$,
where $\eps>0$ can be taken arbitrarily small.
Here, $\cF_1$ and $\cF_2$ are given by \eqref{cal-f}.
\end{lemma}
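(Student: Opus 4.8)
The plan is to prove both inequalities by integrating by parts in time, so as to move the time derivative off $z_t$ (respectively $v_t$). This exposes, on the one hand, boundary contributions and low-order remainders (in the $H^{1-\delta}(\Om)$, $H^1(\Gamma_0)$ norms) and, on the other hand, interior terms that I can absorb into a small multiple of the top-order energy plus the \emph{weighted} quantities displayed on the right-hand sides of \eqref{cf-1}--\eqref{cf-2}. As in Proposition~\ref{pr:main}, I would argue for strong solutions and pass to the limit, and I would freely replace $\int_t^T$ by $\int_0^T$ on the right since the integrands estimated are nonnegative. Throughout I would use the uniform a priori bounds \eqref{energ-bnd}--\eqref{energ-bnd2}, which give $\|A^{1/2}h\|,\|A^{1/2}\zeta\|,\|\cA u\|,\|\cA w\|\le C_R$, and I would track every constant's dependence on $\gamma,\kappa$, since all must be uniform on $[0,1]$.

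For the wave term \eqref{cf-1} I would first split $\cF_1(z)=f(\zeta+z)-f(\zeta)-\mu z$; the linear piece gives $\int_t^T\mu(z,z_t)=\tfrac\mu2(\|z(T)\|^2-\|z(t)\|^2)$, which is lower order. For the nonlinear piece I would exploit the $C^2$ hypothesis on $f$ by writing $f(\zeta+z)-f(\zeta)=g\,z$ with $g:=\int_0^1 f'(\zeta+sz)\,ds$, so that $(f(\zeta+z)-f(\zeta),z_t)=\tfrac12(g,(z^2)_t)$. Integrating by parts in time produces the boundary term $\tfrac12[(g,z^2)]_t^T$ and the interior term $-\tfrac12\int_t^T(g_t,z^2)\,d\tau$, with $g_t=\int_0^1 f''(\zeta+sz)(\zeta_t+sz_t)\,ds$. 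Using $|f''(s)|\le C(1+|s|)$ (the case $q=2$, $n=3$) one has $|g_t|\le C(1+|h|+|\zeta|)(|h_t|+|\zeta_t|)$, and the boundary term is controlled by $\|g\|_{L_3(\Om)}\|z\|_{L_3(\Om)}^2\le C_R\|z\|_{1/2,\Om}^2$, which is lower order.

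The delicate point, and the one I expect to be the main obstacle, is the estimate of $\int_t^T(g_t,z^2)\,d\tau$ in the critical range, since for $n=3$ the embedding $H^1(\Om)\hookrightarrow L_6(\Om)$ is the endpoint and leaves no slack. I would bound the typical worst contribution by $\int_\Om|h|\,|h_t|\,z^2\,dx\le\|h\|_{L_6}\|h_t\|_{L_2}\|z\|_{L_6}^2\le C_R\|h_t\|\,\|A^{1/2}z\|^2$, and then invoke Young's inequality with a free parameter to split $C_R\|h_t\|\,\|A^{1/2}z\|^2\le\eps\|A^{1/2}z\|^2+C_\eps(R)\|h_t\|^2\|A^{1/2}z\|^2$; the terms carrying a $1$ in place of $|h|$ in $g_t$ are treated with $H^{3/4}(\Om)\hookrightarrow L_4(\Om)$ and the interpolation $\|z\|_{3/4}^2\le\eps\|A^{1/2}z\|^2+C_\eps\|z\|_{1-\delta}^2$ (valid for $\delta\in(1/4,1/2)$), after which $\int_0^T\|h_t\|\,d\tau\le C_R T$ turns the residual into $C_{R,T}\max_{[0,T]}\|z\|_{1-\delta,\Om}^2$. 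It is precisely the weighted template of \eqref{cf-1}, namely the factor $(\|h_t\|^2+\|\zeta_t\|^2)$ multiplying $\|A^{1/2}z\|^2$, that makes this parametrized Young step close; this is the non-trivial modification forced by criticality, and it is where the $C^2$ bound on $f$ is indispensable.

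For the plate term \eqref{cf-2} I would use the explicit cubic structure of $F_2$, computing $\cF_2(v)=-(Q-\|\cA^{1/2}u\|^2)\cA v+(\cA^{1/2}(u+w),\cA^{1/2}v)\,\cA w$. In the pairing with $v_t$, the first summand equals $-\tfrac12(Q-\|\cA^{1/2}u\|^2)\tfrac{d}{d\tau}\|\cA^{1/2}v\|^2$; integrating by parts in time and using $\tfrac{d}{d\tau}\|\cA^{1/2}u\|^2=2(\cA u,u_t)$, which is bounded by $C_R$ \emph{uniformly in $\gamma$} since $\|u_t\|\le\|M_\gamma^{1/2}u_t\|\le C_R$, yields only lower-order contributions $\le C_{R,T}\max_{[0,T]}\|v\|_{1,\Gamma_0}^2$. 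The second summand is estimated directly by $C_R\|\cA^{1/2}v\|\,\|\cA w\|\,\|v_t\|\le\eps\|v_t\|^2+C_\eps(R)\|v\|_{1,\Gamma_0}^2$. The essential care here is $\gamma$-uniformity: I must never write $\|\cA^{1/2}v_t\|$ or $\|\cA^{1/2}u_t\|$ (which degenerate as $\gamma\to0$) and instead rely on the identity $(\cA v,v_t)=\tfrac12\tfrac{d}{d\tau}\|\cA^{1/2}v\|^2$ and on pairings such as $(\cA w,v_t)$ that remain meaningful and bounded for $v_t\in V_\gamma$ down to $\gamma=0$. All terms so produced are dominated by the right-hand side of \eqref{cf-2}, which completes the proof.
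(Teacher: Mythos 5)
Your proof is correct, and for \eqref{cf-2} it takes a genuinely different---and simpler---route than the paper's. For \eqref{cf-1} the paper gives no proof at all, deferring to the isothermal case in \cite{bcl-cpaa-06}; your reconstruction (the difference quotient $g=\int_0^1 f'(\zeta+sz)\,ds$, integration by parts in time, and the parametrized Young step $C_R\|h_t\|\,\|A^{1/2}z\|^2\le\eps\|A^{1/2}z\|^2+C_\eps(R)\|h_t\|^2\|A^{1/2}z\|^2$) is precisely the mechanism that produces the weighted middle term, so there is nothing to add there beyond covering the subcritical cases ($n=2$, or $n=3$ with $q<2$), where the embeddings only improve. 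For \eqref{cf-2} the paper proceeds quite differently: it writes $(\cF_2(v),v_t)=\frac{d}{dt}Q_0(v)+P_0(v)$ via first and second Fr\'echet derivatives of $F_2$, and then---because $P_0$ contains the velocity $w_t$ of the reference trajectory---substitutes $w_t=-\cA^{-1}\xi_t-\xi$ from the heat equation; the $\xi_t$ part forces a second integration by parts in time, and the $\xi$ part is what generates the weighted term $C_R\int_0^T(\|\cA v\|^2+\|v_t\|^2)\|\xi\|\,d\tau$ in the statement. You instead exploit the explicit structure of the Berger nonlinearity: the critical pairing $-(Q-\|\cA^{1/2}u\|^2)(\cA v,v_t)$ equals $-\tfrac12(Q-\|\cA^{1/2}u\|^2)\frac{d}{d\tau}\|\cA^{1/2}v\|^2$, a total derivative of a lower-order quantity whose coefficient has time derivative $2(\cA u,u_t)$, bounded by $C_R$ uniformly in $\gamma$ since $\|u_t\|\le\|M_\gamma^{1/2}u_t\|\le C_R$; the remaining summand $(\cA^{1/2}(u+w),\cA^{1/2}v)(\cA w,v_t)$ is already of the form $(\text{lower order})\times\|v_t\|$ and is absorbed by Young. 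This yields \eqref{cf-2} \emph{without} the middle weighted term (harmless, since that term is nonnegative), never touches the thermal equation, and is manifestly uniform in $\gamma,\kappa\in[0,1]$. The paper's detour appears to be inherited from the thermoelastic von Karman setting of \cite{chu-las-preprint06}, where the Airy-stress nonlinearity lacks this transparent gradient structure and the thermal substitution is genuinely needed; for the Berger nonlinearity your direct argument suffices. The only point to keep explicit is the one you already flag: justify $\frac{d}{d\tau}\|\cA^{1/2}v\|^2=2(\cA v,v_t)$ for strong solutions and pass to the limit, so that the identity survives down to $\gamma=0$ where $v_t$ is only in $L_2(\Gamma_0)$.
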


\begin{proof}
The former estimate has been established in \cite{bcl-cpaa-06} in the 
isothermal case;
since the arguments needed in the present case are pretty much the 
same, the proof of \eqref{cf-1} is omitted. 
To show \eqref{cf-2}, it is convenient to recall the expression of 
the nonlinear term $F_2$ acting on the plate component, that is
\begin{equation} \label{def:force-2}
F_2(v)= -\big(Q-||\cA^{1/2}v||^2\big)\cA v - p_0\,,
\end{equation}
and that $F_2(v)=\Phi'(v)$, with $\Phi(v)$ defined by \eqref{plate-potential}.
So then, we write explicitly
\begin{eqnarray*}
(\cF_2(v),v_t) &=& (F_2(w+v),v_t)-(F_2(w),v_t)
\\[1mm]
& = &\frac{d}{dt}\Big[\Phi(u)-(F_2(w),v)\Big] + (F_2'(w)w_t,v)-(F_2(u),w_t)\,
\end{eqnarray*}
with $u=w+v$,
which yields, adding and subtracting $(F_2(w),w_t)$, the following
initial decomposition:
\[
(\cF_2(v),v_t) = \frac{d}{dt}Q_0(v) + P_0(v)\,,
\]
with
\begin{subequations}
\begin{eqnarray}
Q_0(v) &=&\Phi(u) -\Phi(w)-(F_2(w),v)\,,
\\[2mm]
P_0(v) &=&(F_2'(w)w_t,v)-(F_2(u)-F_2(w),w_t)\,.
\label{def:p-0}
\end{eqnarray}
\end{subequations}
By inserting the explicit expression \eqref{def:force-2} of $F_2$
we rewrite $Q_0(v)$ as follows:
\begin{eqnarray*}
Q_0(v)&=& \int_0^1 \big[(F_2(w+\lambda v),v) - (F_2(w),v)\big]\, d\lambda
\nonumber
\\ [1mm]
&=&-\int_0^1 \lambda\big(Q- \|\cA^{1/2}(w+\lambda v)\|^2\big)
(\cA v,v)\, d\lambda
\nonumber
\\ [1mm]
& & \qquad + \int_0^1 \big(\|\cA^{1/2}(w+\lambda v)\|^2-\|\cA^{1/2}w\|^2\big)
(\cA w,v)\, d\lambda\,,
\end{eqnarray*}
which immediately gives
\begin{equation} \label{estimate:q0}
|Q_0(v)| \le C_R \|\cA^{1/2}v\|^2\,,
\end{equation}
in view of \eqref{energ-bnd}.
Let us turn to the analysis of the term $P_0$ as defined by \eqref{def:p-0}.
We preliminarly compute the first and second Fr\'echet derivatives of the
nonlinear function $F_2$ in \eqref{def:force-2}:
\[
F_2'(w)v= -\big(Q-||\cA^{1/2}w||^2\big)\cA v + 2 (\cA w,v)\cA w\,,
\]
\[
\langle F_2''(w);v,\tilde{v}\rangle
= \frac{d}{d\eta}F_2'(w+\eta\tilde{v})v\Big|_{\eta=0}
=2 (\cA \tilde{v},w)\cA v + 2 (\cA \tilde{v},v)\cA w
+ 2 (\cA w,v)\cA\tilde{v}\,;
\]
in particular, we have that
\[
\langle F_2''(w);v,v\rangle= 4 (\cA w,v)\cA v + 2 \|\cA^{1/2} v\|^2\cA w\,.
\]

Observing initially that $F_2'(w)$ is a symmetric operator, we rewrite
\eqref{def:p-0} as follows,
\begin{eqnarray*}
P_0(v)&=&
\int_0^1 \big(\big[(F_2'(w)- (F_2'(w+\lambda v)\big]v,w_t\big)\,d\lambda
\nonumber\\ [1mm]
&=&-\int_0^1 \lambda\,d\lambda
\int_0^1( \langle F_2''(w+\lambda\eta v);v,v\rangle,w_t)\,d\eta
\nonumber\\ [1mm]
&=&-\int_0^1 \lambda\int_0^1 4(\cA[w+\eta\lambda v],v)(\cA v,w_t)
\,d\eta\,d\lambda
\nonumber\\ [1mm]
& & \quad -\int_0^1 \lambda\int_0^1 2\|\cA^{1/2}v\|^2\,(\cA[w+\eta\lambda v],w_t)
\,d\eta\,d\lambda\,.
\end{eqnarray*}

We now substitute $w_t=-\cA^{-1}\xi_t-\xi$ in the above formula,
thus obtaining
\[
P_0(v)=  P_{01}(v)+P_{02}(v)\,,
\]
where
\begin{eqnarray*}
\lefteqn{\hspace{-1cm}
P_{01}(v) = \,4\int_0^1 \lambda\int_0^1 (\cA[w+\eta\lambda v],v)
(\cA v,\xi)\,d\eta\,d\lambda }
\\[1mm]
& & \qquad +\,2 \int_0^1 \lambda\int_0^1 \|\cA^{1/2}v\|^2\,
(\cA[w+\eta\lambda v],\xi) \,d\eta\,d\lambda\,,
\\[2mm]
\lefteqn{\hspace{-1cm}
P_{02}(v)=\underbrace{4\int_0^1 \lambda\int_0^1 (\cA[w+\eta\lambda v],v)
(v,\xi_t)\,d\eta\,d\lambda}_{P_{021}(v)} }
\\[1mm]
& & \qquad +\underbrace{2 \int_0^1 \lambda\int_0^1 \|\cA^{1/2}v\|^2\,
(w+\eta\lambda v,\xi_t)
\,d\eta\,d\lambda}_{P_{022}(v)}\,.
\end{eqnarray*}
Then it is straightforward to see that
\begin{equation}
\begin{array}{lcl}
|P_{01}(v)|
&\le & C_R\, \big[\,|(\cA v,\xi)|\,\|v\| + \|\cA^{1/2}v\|^2\|\xi\|\,\big]
\le C_R \|v\|\,\|\cA v\|\,\|\xi\|
\label{estimate:p0}
\\[2mm]
&\le & \epsilon \|\cA v\|^2+ C_{R,\epsilon}\|v\|^2\quad\mbox{for every $\epsilon>0$}.
\end{array}
\end{equation}
Instead, the terms $P_{02i}(v)$ require further splitting.
It is elementary to see that
\begin{eqnarray*}
\lefteqn{
P_{021}(v)=4\int_0^1 \lambda d\lambda \int_0^1 d\eta
\frac{d}{dt}\Big[(\cA[w+\lambda\eta v],v)(v,\xi)\Big] }
\\[1mm]
& & \qquad - 4\int_0^1 \lambda d\lambda \int_0^1 d\eta
(w_t+\lambda\eta v_t,\cA v)(v,\xi)
\\[1mm]
& &\qquad  - 4\int_0^1 \lambda d\lambda \int_0^1 d\eta
(\cA[w+\lambda\eta v],v_t)(v,\xi)
\\[1mm]
& & \qquad - 4\int_0^1 \lambda d\lambda \int_0^1 d\eta
(\cA[w+\lambda\eta v],v)(v_t,\xi)=:\frac{d}{dt}Q_1(v)
+\sum_{j=1}^3 P_{021}^j(v)\,.
\end{eqnarray*}
Readily
\begin{eqnarray}
& & |Q_1(v)| \le C_R\, \|v\|^2 \|\xi\|
\le C_R \|v\|^2\,;
\label{estimate:q1}\\[2mm]
& & |P_{021}^1(v)| \le  C_R\, \|\cA v\| \|v\|\,, \qquad
|P_{021}^2(v)|\,, \; |P_{021}^3(v)| \le  C_R\, \|v\|\|v_t\| \,.
\nonumber
\end{eqnarray}
Therefore the term
$P_{1}(v):=\sum_{j=1}^3 P_{021}^j(v)$ admits the estimate
\begin{equation}
|P_{1}(v)| \le \epsilon \Big(\|\cA v\|^2+\|v_t\|^2\Big)
+ C_{R,\epsilon}\,\|v\|^2\,.
\label{estimate:p1}
\end{equation}
On the other hand, we also have
\begin{eqnarray*}
\lefteqn{
P_{022}(v)= 2 \int_0^1 \lambda d\lambda \int_0^1 d\eta
\frac{d}{dt}\Big[\|\cA^{1/2}v\|^2\,(w+\eta\lambda v,\xi)\Big] }
\\[1mm]
& & \qquad - 2 \int_0^1 \lambda d\lambda \int_0^1 d\eta
\|\cA^{1/2}v\|^2\,(w_t+\eta\lambda v_t,\xi)
\\[1mm]
& & \qquad - 4 \int_0^1 \lambda d\lambda \int_0^1 d\eta
(v_t,\cA v)(w+\eta\lambda v,\xi)=: \frac{d}{dt}Q_2(v)
+ \sum_{j=1}^2 P_{022}^j(v),
\end{eqnarray*}
where now
\begin{eqnarray}
& & |Q_2(v)| \le C_R\, \|\cA^{1/2}v\|^2
\label{estimate:q2}\\[2mm]
& & |P_{022}^1(v)| \le  C_R\, \|\cA^{1/2} v\|^2\,,\;
|P_{022}^2(v)| \le  C_R\,\big(\|\cA v\|^2+\|v_t\|^2\big)\|\xi\|\,,
\nonumber
\end{eqnarray}
so that for $P_{2}(v):=P_{022}^1(v)+P_{022}^2(v)$ we obtain
\begin{equation}
|P_{2}(v)|\le C_R\, \|\cA^{1/2} v\|^2 +
C_R\Big(\|\cA v\|^2+\|v_t\|^2\Big)\|\xi\|\,.
\label{estimate:p2}
\end{equation}

We have therefore proved that the following decomposition holds true,
\begin{equation}\label{final-decompostion}
(\cF_2(v),v_t) = \frac{d}{dt}Q(v) + P(v)\,,
\end{equation}
where
\begin{eqnarray*}
Q(v) &=& Q_0(v)+Q_1(v)+Q_2(v),
\\[2mm]
P(v) &=& P_{01}(v)+P_1(v)+P_2(v)
\end{eqnarray*}
with the bounds \eqref{estimate:q0}, \eqref{estimate:q1},
\eqref{estimate:q2} pertaining to the terms $Q_i$ and
\eqref{estimate:p0}, \eqref{estimate:p1}, \eqref{estimate:p2}
for the terms $P_i$, respectively.
Thus, integrating in time between $t$ and $T$ both sides of
\eqref{final-decompostion} and taking into account the complex of
estimates established above, we finally obtain \eqref{cf-2}.
\end{proof}


Notice now that by the lower bound in \eqref{g-for-dim0} it follows that
$ms^2\le sg(s)$ and thus
\begin{equation}\label{h-zeta-bnd}
\|h_t(t)\|^2_{\Om} +
\|\zeta_t(t)\|^2_{\Om}\le
D_{h,\zeta}(t):= m^{-1}\big[ (D(h_t(t)),h_t(t))_{\Omega}
+ (D(\zeta_t(t)),\zeta_t(t))_{\Om}\big]
\end{equation}
for all $t\ge 0$.
Hence, by using the estimates established in Lemma~\ref{le:f12}, and the
elementary inequality $||\xi||\le \epsilon + (4\epsilon)^{-1}||\xi||^2$,
valid for arbitrary small $\eps>0$, we obtain
the following bound for $\Psi_T(z,v)$ defined by \eqref{psi-def}:
\begin{equation}\label{psi-T}
\Psi_T(z,v)\le \eps \int_0^T E^0(t)dt +C_\eps(T, R)\, \Xi_T(z,v)\,,
\end{equation}
with $E^0(t)$ defined in \eqref{e0-full} and $\Xi_T(z,v)$ given by
\begin{eqnarray}\label{Xi-T}
\Xi_T(z,v)& := & lot_T(z,v)
+ \int_0^T D_{h,\zeta}(\tau) \|A^{1/2}z(\tau)\|^2_{\Om} d\tau
\nonumber \\
& &
+\int_0^T \|\xi(\tau)\|^2\left( \|\cA v(\tau)\|^2_{\Gamma_0}+
\|\cM^{1/2} v_t(\tau)\|^2_{\Gamma_0}\right) d\tau\,.
\end{eqnarray}
Above, we used the notation $lot_T(z,v)$ as in \eqref{lot-t-zv},
while $D_{h,\zeta}(t)$ has been introduced within \eqref{h-zeta-bnd}.

\smallskip
To proceed, we will use the next assertion.
\begin{lemma}\label{le:h0-g0}
Under Assumption~\ref{hypo-2}, the following estimate holds
for $H_0^T(z)$ defined in \eqref{integrals-st-z},
with arbitrarily small $\eps>0$:
\begin{equation}\label{h0-g0}
H^T_0(z)\le \eps \int_0^T E^0(t)\,dt +
C_\eps \left[ \Xi_T(z,v) + G_0^T(z)\right]\,,
\end{equation}
where $G_0^T(z)$ is defined in \eqref{integrals-st},
and $\Xi_T(z,v)$ is given by \eqref{Xi-T}.
\end{lemma}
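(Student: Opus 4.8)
The plan is to bound the integrand of $H_0^T(z)$ pointwise in time and then split the result so that the ``good'' weighted quantity reappears inside $\Xi_T(z,v)$. First I would invoke the auxiliary inequality \eqref{ineq:difficult-g}, applied with $\zeta_t$ in place of $\zeta$, with $z_t$ in place of $z$, and with the \emph{displacement} $z$ in place of $h$; since $\zeta_t+z_t=h_t$, this gives the pointwise estimate
\[
\big|\big(D(\zeta_t+z_t)-D(\zeta_t),z\big)_\Om\big|
\le C_0\big[(D(\zeta_t),\zeta_t)+(D(h_t),h_t)\big]\,\|A^{1/2}z\|+C_0\|z\|\,.
\]
By the very definition of $D_{h,\zeta}(t)$ recorded in \eqref{h-zeta-bnd}, the bracket equals $m\,D_{h,\zeta}(t)$, so integrating over $[0,T]$ yields $H_0^T(z)\le C_0 m\int_0^T D_{h,\zeta}\,\|A^{1/2}z\|\,d\tau+C_0\int_0^T\|z\|\,d\tau$.

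The next step treats the two integrals separately. For the first I would apply Young's inequality to the product $D_{h,\zeta}\|A^{1/2}z\|$ in the form that keeps $D_{h,\zeta}$ as a weight, namely $D_{h,\zeta}\|A^{1/2}z\|\le \eta\,D_{h,\zeta}+C_\eta\,D_{h,\zeta}\|A^{1/2}z\|^2$; the second summand integrates to a multiple of the weighted term $\int_0^T D_{h,\zeta}\|A^{1/2}z\|^2\,d\tau$ that already sits inside $\Xi_T(z,v)$ (see \eqref{Xi-T}), and is thus absorbed by $C_\eps\Xi_T$. The linear term is handled by $\int_0^T\|z\|\,d\tau\le T\sup_{[0,T]}\|z\|$ together with the elementary bound $\|z\|\le \eps+C_\eps\|z\|^2$ and the inclusion $\sup_{[0,T]}\|z\|^2\le \sup_{[0,T]}\|z\|^2_{1-\delta,\Om}\le lot_T(z,v)$, which again sends it into $\Xi_T$.

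The main obstacle is the residual first summand $\eta\int_0^T D_{h,\zeta}\,d\tau$: carrying no factor of $z$, it is not literally of the form $C_\eps\Xi_T$. Here I would use that $D_{h,\zeta}$ is uniformly integrable in time: combining \eqref{h-zeta-bnd} with the dissipation bounds \eqref{bounds} gives $\int_0^T D_{h,\zeta}\,d\tau\le C_R$, a constant \emph{independent of $T$} (and of $\gamma,\kappa$). Choosing $\eta$ small makes this contribution as small as desired, and—crucially—the way Lemma~\ref{le:h0-g0} is consumed, being inserted into the master inequality \eqref{ineq:main} whose left-hand side carries the factor $T\,E^0(T)$, means any such $T$-independent remainder is effectively divided by $T$ and is therefore harmless. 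The slack terms $\eps\int_0^T E^0(t)\,dt$ and $G_0^T(z)$ on the right-hand side serve precisely this accommodating purpose, giving the bound the same shape as the companion estimate \eqref{psi-T} for $\Psi_T$, so that the two can be added and the energy integral absorbed on the left. I expect the only genuinely delicate point to be the bookkeeping of the dependence of constants on $R$ and $T$ while keeping them independent of $\gamma,\kappa$, which holds because every constant above ($C_0$, $m$, and the dissipation bound $C_R$ from \eqref{bounds}) is independent of those two parameters.
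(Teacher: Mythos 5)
Your argument has a genuine gap, and it is located exactly where you flagged it: the residual term $\eta\int_0^T D_{h,\zeta}\,d\tau\le \eta C_R$ (and likewise the additive $\eps T$ you generate when converting $\int_0^T\|z\|\,d\tau$ into a multiple of $lot_T$). These are positive constants that do \emph{not} vanish when $y_1=y_2$, and the claim that they are ``harmless because they get divided by $T$'' confuses the two distinct uses of the main inequality. In Proposition~\ref{prop:energy-in-T} (asymptotic smoothness) an additive constant $C_\eps(R)$ is indeed tolerated, because there one only needs $E^0(T)\le\eps+\dots$ for large $T$; that is precisely why the cruder bound \eqref{h0T}, derived from \eqref{ineq:difficult-g} exactly as you do, suffices there. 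But Lemma~\ref{le:h0-g0} feeds into the stabilizability estimate, where one must reach $E^0(T)\le qE^0(0)+C_{R,T}\Xi_T(z,v)$ with $q<1$ and \emph{no} additive constant: the iteration over intervals $[mT,(m+1)T]$ would otherwise accumulate a residual $\eta C_R/(1-q)$ that persists as $m\to\infty$, and the final estimate \eqref{stab-est} would acquire a constant on the right-hand side, destroying both the finite-dimensionality argument (quasi-stability requires the non-decaying part to be a compact seminorm of the \emph{difference} of trajectories) and the regularity argument (which applies \eqref{stab-est} to $y_1=y(s+\sigma)$, $y_2=y(s)$ and needs the bound to scale like $\sigma^2$). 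Every term on the right of \eqref{h0-g0} — $\int E^0$, $\Xi_T$, $G_0^T$ — is quadratic in, or vanishes with, the difference of the two trajectories; your leftover constants are not, and the statement as written does not permit them.

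The paper avoids this by using a different pointwise inequality, namely \eqref{ineq:difficult-g1}, which is available only under the stronger two-sided growth condition \eqref{g-for-dim0} of Assumption~\ref{hypo-2}:
\[
|(D(\zeta+z)-D(\zeta),h)|\le C_{\epsilon}\,(D(\zeta+z)-D(\zeta),z)
+ \epsilon\,\bigl(1+(D(\zeta),\zeta)+(D(\zeta+z),\zeta+z)\bigr)\,\|A^{1/2}h\|^2 .
\]
Applied with $\zeta\mapsto\zeta_t$, $z\mapsto z_t$, $h\mapsto z$ and integrated, the first term gives exactly $C_\eps G_0^T(z)$ (this is why $G_0^T$ appears on the right of \eqref{h0-g0} at all — your proposal never actually produces it), the ``$1$'' in the weight gives $\eps\int_0^T\|A^{1/2}z\|^2\le\eps\int_0^T E^0$, and the remaining weighted term is $\eps m\int_0^T D_{h,\zeta}\|A^{1/2}z\|^2\,d\tau\le C_\eps\Xi_T(z,v)$. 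No zeroth-order remainder is created. So the missing idea is not bookkeeping but the substitution of \eqref{ineq:difficult-g} by the sharper estimate \eqref{ineq:difficult-g1}; this is precisely where Assumption~\ref{hypo-2} (as opposed to Assumption~\ref{hypo-0}) enters the proof.
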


\begin{proof}
In view of Assumption~\ref{hypo-2} it can be shown that
for every $\epsilon>0$ there exists $C_\epsilon>0$ such that
\bey
\lefteqn{\hspace{-1cm}
|(D(\zeta+z)-D(\zeta),h)|\le C_{\epsilon}\,(D(\zeta+z)-D(\zeta),z) }
\nonumber\\[2mm]
& & \quad + \epsilon \, \big(1+ (D(\zeta),\zeta)+ (D(\zeta+z),\zeta+z)\big)\,
||A^{1/2}h||^2\,,
\label{ineq:difficult-g1}
\eey
for any $\zeta, z, h\in \cD(A^{1/2})$, where $D$ is
the damping operator defined by \eqref{Dh-mem};
see \cite[Section~4]{chu-las-1} or \cite[Chapter~5]{mem}).
Owing to estimate \eqref{ineq:difficult-g1}, it is immediately seen that
\[
H^T_0(z)\le C_\epsilon G_0^T(z)+\epsilon \int_0^T E^0(t)\,dt +
\epsilon m\,\int_0^T D_{h,\zeta}(\tau) \|A^{1/2}z(\tau)\|^2_{\Om} d\tau\,,
\]
which in turn implies \eqref{h0-g0}, as desired.
\end{proof}

Notice that by the lower bound in \eqref{g-for-dim0} one has, as well, that
\begin{equation}\label{zv2-gg}
\int_0^T \|z_t\|^2\,  dt\le \frac1m\, G_0^T(z)\,.
\end{equation}
Thus, we return to the basic inequality \eqref{ineq:main} in
Proposition~\ref{pr:main} and apply the estimates \eqref{zv2-gg},
\eqref{psi-T} and \eqref{h0-g0}. Choosing $\eps$ sufficiently
small, we obtain
\begin{equation}\label{ineq-main2}
TE^0(T) + \int_{0}^{T} E^0(t) \,dt
\le C_1\Big[ \beta G_0^T(z)+ \alpha \int_0^T \|\cA^{1/2}\theta\|^2\Big]
+  C_2(T,R) \, \Xi_T(z,v)
 \end{equation}
for $T\ge T_0>0$.
On the other hand, using the energy equality \eqref{key-equality} and
Lemma~\ref{le:f12} we also have that
\[
\beta  G_0^T(z) + \alpha \int_0^T \|\cA^{1/2}\theta\|^2
\le E^0(0) - E^0(T) + \epsilon  \int_0^T E^0(t)dt
+ C_{\epsilon}(T, R)\, \Xi_T(z,v)
\]
for any $\epsilon>0$.
Combining this bound with \eqref{ineq-main2}, we see that there exists $T>1$
such that
\begin{equation}\label{1-iter}
E^0(T)\le q E^0(0) + C_{R,T}\,\Xi_T(z,v)\quad\mbox{with}\quad
0<q\equiv q_{T,R}<1\,.
\end{equation}
We can now apply the same procedure described in \cite{chu-las-1}
(see also \cite[Chap.~3]{mem}, \cite{bcl-cpaa-06}).
From \eqref{1-iter} we have that
\begin{eqnarray*}
E^0((m+1)T) \le q E^0(mT) + c b_m\,, \quad  m=0,1,2,\ldots,
\end{eqnarray*}
where $c=c_{R,T}>0$ is a constant and
\begin{eqnarray*}
b_m & := & \sup_{t\in [mT, (m+1)T]}
\left( \|z(t)\|^2_{1-\delta}+ \|v(t)\|^2_{2-\delta}\right)
\\ & &
+ \int_{mT}^{(m+1)T} \big[D_{h,\zeta}(\tau)+ \|\cA^{1/2}\xi(\tau)\|^2\big]
E^0(\tau)\, d\tau\
\end{eqnarray*}
with $D_{h,\zeta}(t)$  defined in \eqref{h-zeta-bnd}.
This  yields
\[
E^0(mT) \leq q^m E^0(0) + c \sum_{l =1}^m q^{m -l} b_{l-1}.
\]
Since $q < 1$,  there exists $\omega > 0 $ such that
\begin{eqnarray*}
E^0(mT) & \leq & C_1 e^{-\omega mT}E^0(0)
\\ & & + C_{2}
\Big\{ lot_{mT}(z,v)
+\int_0^{mT}\!\!\! e ^{-\omega (mT-\tau)}\left[ D_{h,\zeta}(\tau)+ \|\cA^{1/2}\xi\|^2\right]  E^0(\tau)\Big\},
\end{eqnarray*}
which  implies that
$$
E^0(t)\le C_1 e^{-\omega t}E^0(0) + C_{2}
\Big\{ lot_{t}(z,v)
 +\int_0^{t} e ^{-\omega (t-\tau) }\left[ D_{h,\zeta}(\tau)+ \|\cA^{1/2}\xi\|^2\right]  E^0(\tau) d\tau \Big\}
$$
for all $t\ge0$.
Therefore, applying Gronwall's lemma we find that
\[
E^0(t)\le \big[ C_1 E^0(0)e^{-\om t} + C_{2}\, lot_{t}(z,v)\big]
 \,\exp\Big\{
C_2 \int_0^{t} \left[ D_{h,\zeta}(\tau)+ \|\cA^{1/2}\xi\|^2\right]  d\tau \Big\}.
\]
Since, by \eqref{bounds} we have that
\[
\int_0^{t} \left[ D_{h,\zeta}(\tau)+ \|\cA^{1/2}\xi\|^2\right] \ d\tau
\le C_R,\quad\mbox{for all}\quad t\ge 0,
\]
we obtain the estimate \eqref{stab-est}.
This concludes the proof of Theorem~\ref{p:stabiliz}.
\qed

\begin{remark}
\begin{rm}
It is important to emphasize that the stabilizability estimate
\eqref{stab-est} is independent of the parameters $\gamma$ and $\kappa$.
This fact will be crucially important in the proof of Theorem~\ref{co:up-sc}.
\end{rm}
\end{remark}

\section{Proof of Theorem~\ref{t:main2} and  Theorem~\ref{co:up-sc}}\label{s:proof-t2}
{\em $1$. Finiteness of fractal dimension.}
To prove finiteness of the fractal dimension $\dim_f\mfA$,
we appeal to a generalization of Ladyzhenskaya's theorem on dimension
of invariant sets, given by \cite[Theorem~2.10]{chu-las-1}.
This result applies, in view of the local Lipschitz continuity
of the semi-flow $S_t$ (see \eqref{SF}) and of the stabilizability
estimate shown in Proposition~\ref{p:stabiliz}.

Following the method described in \cite{chu-las-1} (subsequently
used in \cite{mem,snowbird,chu-las-preprint06}),
let us introduce the extended space $Y_T=Y\times W_1(0,T)$
(with an appropriate $T>0$), where $W_1(0,T)$ is the space of pairs
$(z(t),v(t))$ in $L_2(0,T;H^1(\Om)\times H^2(\Gamma_0)\cap H^1_0(\Gamma_0))$
such that $(z_t(t),v_t(t))$ belongs to
$L_2(0,T;L_2(\Om)\times L_2(\Gamma_0))$.
Next, we consider in $Y_T$ the set
\[
\mfA_T=\left\{W=(U; z(t,U), v(t,U), t\in [0,T]) :\; U\in \mfA  \right\}\,,
\]
where $(z(t,U),v(t,U),\theta(t,U))$ is a solution to \eqref{abstract-system},
$U=(z^0,z^1,v^0,v^1,\theta^0)$, and define a shift operator
$V: \mfA_T\mapsto\mfA_T$ by the formula
\[
V :\; (U; z(t,U), v(t,U), t\in [0,T])\mapsto
(S_TU; z(t+T,U), v(t+T,U), t\in [0,T])\,.
\]
Then, by using pretty much the same arguments as in \cite[Sect.3.4]{chu-las-1}
(or in \cite[Ch.~4]{mem}, \cite{snowbird}, \cite{chu-las-preprint06}), we see that
the assumptions of Theorem~2.10 in \cite{chu-las-1} are satisfied.

That ${\rm dim}_f\mfA$ is independent of $\gamma$ and $\kappa$
follows from the fact that the estimate \eqref{stab-est} is uniform
with respect to $\gamma$ and $\kappa$.
The analysis is similar to the one carried out in the case of thermoelastic
von Karman evolutions; see \cite{chu-las-preprint06}.

\medskip
\noindent
{\em $2$. Smoothness of the global attractor.}
The additional regularity of the attractor follows from
Theorem~\ref{p:stabiliz}, by using similar arguments
as in \cite[Sect.~4.2]{mem}
(see also \cite{bcl-cpaa-06}, where the isothermal case is studied, or
\cite{chu-las-jde06,snowbird,chu-las-preprint06}, dealing with
different models).

Let $\{ y(t)\equiv (z(t),z_t(t),v(t),v_t(t),\theta(t))\, :\, t\in\R\}\subset Y$
be a full trajectory from the attractor $\mfA$.
Let $|\sigma|<1$.
Applying Theorem~\ref{p:stabiliz} with $y_1=y(s+\sigma)$, $y_2=y(s)$
(and, accordingly, the interval $[s,t]$ in place of $[0,t]$), we have that
\begin{eqnarray}
\lefteqn{
\|y(t+\sigma)- y(t)\|^2_Y \le C_1 e^{-\om (t-s)}\|y(s+\sigma)- y(s)\|^2_Y }
\nonumber \\[2mm]
& & +\, C_2 \max_{\tau\in [s,t]}
\left( \|z(\tau+\sigma)-z(\tau) \|^2_{1-\delta}+
\|v(\tau+\sigma)-v(\tau)\|^2_{2-\delta}\right)
\label{stab-est-atr}
\end{eqnarray}
for any $t,s\in\R$ such that $s\le t$ and for any $\sigma$ with $|\sigma|<1$.
Letting $s\to-\infty$, \eqref{stab-est-atr} gives
\[
\|y(t+\sigma)- y(t)\|^2_Y \le
C_2
 \max_{\tau\in [-\infty,t]}
\left( \|z(\tau+\sigma)-z(\tau) \|^2_{1-\delta}+
\|v(\tau+\sigma)-v(\tau)\|^2_{2-\delta}\right)
\]
for any $t\in\R$  and  $|\sigma|<1$.
By interpolation we have that
\begin{eqnarray*}
\|z(\tau+\sigma)-z(\tau) \|^2_{1-\delta}+
\|v(\tau+\sigma)-v(\tau)\|^2_{2-\delta}\le \eps \|y(t+\sigma)- y(t)\|^2_Y
 \\[2mm]
+\,C_\eps \left( \|z(\tau+\sigma)-z(\tau) \|^2+
\|v(\tau+\sigma)-v(\tau)\|^2\right)
\end{eqnarray*}
for every $\eps>0$. Therefore, we obtain
\begin{equation}\label{Sm}
\max_{\tau\in [-\infty,t]} \|y(\tau+\sigma)- y(\tau)\|^2_Y \le
C\, \max_{\tau\in [-\infty,t]}
\left( \|z(\tau+\sigma)-z(\tau) \|^2+
\|v(\tau+\sigma)-v(\tau)\|^2\right)
\end{equation}
for any $t\in\R$ and  $|\sigma|<1$.
On the other hand, on the attractor we have that
\[
\frac{1}{\sigma} \|z(\tau+\sigma)-z(\tau) \|
\le\frac1\sigma \, \int_0^\sigma \|z_t(\tau+t)\| d\tau\leq C,\quad t \in \R\,,
\]
and
\[
 \frac{1}{\sigma}
\|v(\tau+\sigma)-v(\tau) \|
\le\frac1\sigma \, \int_0^\sigma \|v_t(\tau+t)\| d\tau\leq C,\quad t \in \R\,,
\]
which combined with \eqref{Sm} give
\begin{equation*}
\max_{\tau\in \R}\left\|\frac{y(\tau+\sigma)- y(\tau)}{\sigma}\right\|^2_Y
\le C\quad\mbox{for }\; |\sigma|<1\,.
\end{equation*}
This implies that
\begin{equation}\label{smooth-1}
\| z_{tt}(t)\|^2+ \| A^{1/2}z_{t}(t)\|^2+\| v_{tt}(t)\|^2
+\gamma \|\cA^{1/2} v_{tt}(t)\|^2 + \| \cA v_{t}(t)\|^2 +\|\theta_t(t)\|^2
\le C
\end{equation}
for all $t\in\R$ and for any  trajectory
$\{ y(t)\equiv (z(t),z_t(t),v(t),v_t(t), \theta(t))\, :\, t\in\R\}$
from the attractor $\mfA$.

The estimate \eqref{smooth-1} enables us to establish the spatial smoothness
of the attractor.
Let us begin with the analysis of the plate variable $v$.
In view of \eqref{plate-eq-var}, \eqref{heat-eq-var} and
\eqref{smooth-1} it is easily seen that on the attractor one has
\[
\|\Delta \theta(t)\|\le C\,\big(\|\cA v_t\|+ \|\theta_t\|\big)\le C\,,
\]
and hence, since $\|\cA^{-1/2}M_\gamma^{1/2}\|\le C$,
\[
\|\cA^{-1/2}\Delta^2 v\|\le \|M_\gamma^{-1/2}\Delta^2 v\|
\le C\,\big(\|\Delta\theta\|+\|F_2(v)\|+ \|M_\gamma^{1/2}v_{tt}\| +
\|z_t\|_1\big)\le C\,,
\]
for $\gamma>0$, whereas $\|\Delta^2 v\|\le C$ in the case $\gamma=0$.
Thus, we can conclude that $t\mapsto v(t)$ is a bounded function in
the space $W_\gamma$ defined in \eqref{w-gamma}.

As for the wave component $z$, in the case $n=2$ or $\{n=3, 1\le p\le 3\}$
from \eqref{lip-g1} it follows that
\[
\|g(z_t)\|\le C\big( 1+ \|z_t\|^p_{L_{2p}(\Om)}\big)\le
C\big( 1+ \|z_t\|^p_{1,\Om}\big)\,.
\]
Therefore from \eqref{wave-eq-var} and \eqref{smooth-1} we obtain that
$z(t)$ solves the problem
\begin{equation}\label{smooth-2}
(-\Delta+\mu)z= f_1(t)\; \mbox{ in }\; \Om,\quad
\frac{\partial z}{\partial\nu}= f_2(t)\; \mbox{ on }\; \Ga,
\end{equation}
where $f_1\in L_\infty(\R,L_2(\Om))$ and $f_2\in L_\infty(\R,H^s(\Ga))$
for any $s<3/2$.
By the elliptic regularity theory (see, e.g.,\ \cite[Chap.~5]{Triebel78})
we conclude that $z(t)$ is a bounded function with values in $H^2(\Om)$.

In the case $\{n=3\,,\, 3< p\le 5\}$ we have that $g(z_t)$ is bounded in
$L_{6/p}(\Om)$ and therefore $z$ solves \eqref{smooth-2} with
$f_1(t)\in L_\infty(\R; L_{6/p}(\Om))$.
Again, the elliptic regularity theory ensures that $z(t)$
is bounded in $W^2_{6/p}(\Om)$.
Thus, the second statement of Theorem~\ref{t:main2} is proved.
\qed

\medskip
{\em Proof of Theorem~\ref{co:up-sc}.}
The argument is standard (see, e.g., \cite{babin,chueshov-book} and
\cite{raquel2}) and relies on the uniform estimate
\eqref{regularity-estimate} of Theorem~\ref{t:main2}.
We proceed by contradiction. Assume that \eqref{semi-cont-at1} does not hold.
Then there exists a sequence $\la_n=(\ga_n,\ka_n)$ such that
$\la_n\to \la_0\equiv(\ga_0,\ka_0)$ and a sequence
$U_n\in \mfA^{\ga_n,\ka_n}$ such that
\begin{equation}\label{cntr-s}
{\rm dist}_{Y_{\ga_0}}\left( U_n,\mfA^{\ga_0,\ka_0}\right)\ge\delta>0\,,
\quad n=1,2,\ldots
\end{equation}
Let $\left\{ U_n(t)\, :\, t\in\R\right\}$ be a full trajectory from the
attractor $\mfA^{\ga_n,\ka_n}$ such that $U_n(0)= U_n$.
Since $W^2_{\min\{2,6/p\}}(\Om)$ is compactly
embedded in $H^1(\Om)$ for $p<5$,
using \eqref{first-bnd}, \eqref{regularity-estimate} and Aubin's compactness
theorem (see, e.g., \cite[Corollary~4]{simon}), we can conclude that there exists a sequence
$\{n_k\}$ and a function $U(t)\in C_b(\R,Y_{\ga_0})$ such that
\[
\max_{t\in [-T,T]}
\|U_{n_k}(t)-U(t)\|_{Y_{\ga_0}}\to 0\quad\mbox{as}\quad k\to\infty\,.
\]
Since $\|U(t)\|_{Y_{\ga_0}}\le R$ for all $t\in\R$ and  some $R>0$, the trajectory
$\left\{ U(t)\, :\, t\in\R\right\}$ belongs to the attractor $\mfA^{\ga_0}$.
Consequently,  $ U_{n_k}\to U(0)\in \mfA^{\ga_0,\ka_0}$,
which contradicts \eqref{cntr-s}.
\qed

\begin{remark}\label{re:up-sc2}
\begin{rm}
Since $W^2_{6/5}(\Om)$ is \emph{not} compactly
embedded in $H^1(\Om)$, the above proof fails in the case $p=5$.
Yet, we may use the compactness of the embedding
$W^2_{6/5}(\Om)\subset H^{1-\eps}(\Om)$ for arbitrary $\eps>0$
so as to establish the semi-continuity property in the space
$Y^{-\eps}_{\gamma_0}$ defined in \eqref{Y-g-eps}.
\end{rm}
\end{remark}

\appendix

\section{Sketch of the proof of Theorem~\ref{thm:well-posed} (well-posedness)}
\label{appendix}
Let us consider the abstract second-order system \eqref{abstract-system}
corresponding to the PDE model \eqref{pde-system}.
To give a first-order abstract formulation of \eqref{abstract-system}
in the variable $y=[z,z_t,v,v_t,\theta]^T$, we introduce the operator
$L:\cD(L)\subset Y\to Y$ defined by
\be
\label{dynamics-operator}
L \left[\!\! \ba{c} z_1 \\[1mm] z_2 \\[1mm] v_1 \\[1mm] v_2 \\[1mm] \theta  \ea \!\!\right] =
\left[\!\!\ba{c} -z_2  \\[1mm]  A z_1 + D(z_2)+z_2 - \alpha \kappa A N_0 v_2
\\[1mm]  - v_2
\\[1mm] M_{\gamma}^{-1}\left[ \cA^2 v_1
+ \beta\kappa N_0^* A z_2 + v_2-\cA\theta\right]
\\[1mm]    \cA\theta +\cA v_2
 \ea \!\!\right]\,,
\ee
with domain
\be\label{dom-t}
\ba{l}
\cD(L) = \Big\{
[z_1,z_2,v_1,v_2,\theta] \in \cD(A^{1/2})\times \cD(A^{1/2})
\times W_\gamma \times \cD(\cA)\times \cD(\cA):
\\[2mm]
\myspace\myspace
A[z_1 - \alpha \kappa N_0 v_2] + D(z_2)\in L_2(\Omega)\,\Big\}\,,
\ea
\ee
which is dense in $Y$.
Then, we see that system \eqref{abstract-system} can be rewritten as
\be\label{simple-system}
y' + L y = C(y), \quad y(0)=y_0\,,
\ee
with $L$ defined by \eqref{dynamics-operator} and $C$ given by
\[
C\left[z_1, z_2, v_1, v_2,\theta \right]
=
\left[0, -F_1(z_1)+z_2, 0,  -F_2(v_1)+v_2, 0\right]^T\,.
\]
It is not difficult to show that $L$ is a maximal monotone operator,
and hence the proof will be omitted; a detailed proof in the isothermal case
is found in \cite[Appendix~A]{bcl-cpaa-06}, see also \cite{bucci-03}
and \cite{las-cmbs}.
Moreover, as a consequence of \eqref{F}, the nonlinear term $C$ is locally
Lipschitz on the phase space $Y$.
Thus, the nonlinear equation \eqref{simple-system} is a locally
Lipschitz perturbation of a system driven by an $m$-monotone operator
and one may invoke \cite[Theorem~7.2]{chu-eller-las-1}, which yields
the local existence (and uniqueness) for both strong and
generalized solutions.
(The reader is referred to \cite{brezis-2,barbu,show} for classical results
on nonlinear semigroups and differential equations in Banach spaces).

The next step consists in establishing the energy inequality
\eqref{energy-ineq} and the identity \eqref{energy-identity} on the
solutions' existence interval.
Indeed, the equality \eqref{energy-identity} pertaining to strong solutions
is easily shown using a standard argument.
That generalized solutions satisfy the inequality \eqref{energy-ineq}
follows by a limit procedure, in view of \eqref{energy-identity}
and Definition~\ref{str-sol-2ord}(G).
Moreover, \eqref{energy-ineq} implies \eqref{energy-decr} and
\eqref{energy-est2}.
Finally, the latter estimate makes it possible to establish a global
existence result and therefore to conclude the proof.

\end{document}